\definecolor{darkgreen}{rgb}{0,0.55,0}
\newcommand{\newautotheorem}[3]{
  \newaliascnt{#1}{#2}  
  \newtheorem{#1}[#1]{#3}
  \aliascntresetthe{#1}
}
\numberwithin{equation}{section}
\renewcommand\*{\cdot}
\newcommand\1{\mathbf{1}}
\newcommand\R{\mathbb{R}}
\newcommand\C{\mathbb{C}}
\newcommand\N{\mathbb{N}}
\newcommand\Q{\mathbb{Q}}
\newcommand{\supp}{\operatorname{supp}}
\providecommand{\setc}[2]{\left\{ #1 \cond #2\right\}}
\providecommand{\abs}[1]{\left\lvert#1\right\rvert}
\providecommand{\norm}[2]{\left\lVert#1\right\rVert_{#2}}
\newcommand{\scal}[3]{\left\langle #1, #2\right\rangle_{#3}}
\newcommand{\innerprod}[3]{\left\langle #1, #2\right\rangle_{#3}}
\newcommand{\dist}{\operatorname{dist}}
\newcommand{\PP}{\mathbb{P}}
\newcommand{\MM}{\mathbb{M}}
\newcommand{\EE}{\mathbb{E}}
\newcommand{\cA}{\mathcal{A}}
\newcommand{\cB}{\mathcal{B}}
\newcommand{\cO}{\mathcal{O}}
\newcommand{\cC}{\mathcal{C}}
\newcommand\ddt{\tfrac{\partial}{\partial t}}
\newcommand\ddx{\tfrac{\partial}{\partial x}}
\newcommand\ddxx{\tfrac{\partial^2}{\partial x^2}}
\newcommand\ddy{\tfrac{\partial}{\partial y}}
\newcommand\ddz{\tfrac{\partial}{\partial z}}
\newcommand\linspan[1]{\operatorname{span}\left\{#1\right\}}
\newcommand\mHC{\cdot}
\newcommand\F{\mathcal{F}}
\renewcommand\d{\,\operatorname{d}\hspace{-0.05cm}}
\providecommand\cond{\,\middle\vert\,}
\renewcommand\cond{\,\middle\vert\,}
\renewcommand{\P}[1]{\mathbb{P}\left[#1\right]}                     % probability
\newcommand{\E}[1]{\mathbb{E}\left[#1\right]}                     % expectation
\newcommand{\llbrak}{\llbracket}
\newcommand{\rrbrak}{\rrbracket}
\newcommand\Id{\operatorname{Id}}                               % Identity Operator 
\newcommand\dom{\mathcal{D}}                                      % Domain
\newcommand\DA{\dom(\cA)}
\renewcommand\H{\mathfrak H}
\renewcommand\L{\mathfrak L}
\newcommand\cI{\mathcal I}
\newcommand{\HS}{\mathscr{L}_2}
\newcommand{\Lbd}{\mathscr{L}}
\newcommand\tmm{[t]_m}
\newcommand\tmp{[t+\delta_m]_m}
\title[Approximation and Invariance for SMBPs]{Forward-Invariance and
  Wong-Zakai Approximation for Stochastic Moving Boundary Problems}
\author{Martin Keller-Ressel}
\author{Marvin S. M\"uller}
\address[M. ~Keller-Ressel]{Department of Mathematical Stochastic, TU Dresden, Germany}
\email[M. ~Keller-Ressel]{martin.keller-ressel@tu-dresden.de}
\address[M. S. ~M\"uller]{Department of Mathematics, ETH Z\"urich, Switzerland}
\email[M.\,S. ~M\"uller]{marvin.mueller@math.ethz.ch}
\thanks{Both authors acknowledge support by the German Research Foundation (DFG) under the grant ZUK 64. MM also acknowledges support by the Swiss National Science Foundation through grant SNF $205121\_163425$. Most of the work of MM was carried out within the scope of his dissertation~\cite{diss}.}
\subjclass[2010]{60H15, 35R60}
\keywords{Stochastic partial differential equation, Stefan problem, moving boundary problem, Phase separation, Forward invariance, Wong-Zakai approximation}
\date{\today}
\begin{document}
\begin{abstract}
  We discuss a class of stochastic second-order PDEs in one space-dimension with an inner boundary moving according to a possibly non-linear, Stefan-type condition. We show that proper separation of phases is attained, i.e., the solution remains negative on one side and positive on the other side of the moving interface, when started with the appropriate initial conditions. To extend results from deterministic settings to the stochastic case, we establish a Wong-Zakai type approximation. After a coordinate transformation the problems are reformulated and analysed in terms of stochastic evolution equations on domains of fractional powers of linear operators.
\end{abstract}
%------------------------------------------------------------------------------
%theorems
\theoremstyle{plain}
\newtheorem{thm}{Theorem}[section]
\providecommand*{\thmautorefname}{Theorem} 
\newautotheorem{lem}{thm}{Lemma}
\providecommand*{\lemautorefname}{Lemma} 
\newautotheorem{prop}{thm}{Proposition}
\providecommand*{\propautorefname}{Proposition} 
\newautotheorem{cor}{thm}{Corollary}
\providecommand*{\corautorefname}{Corollary} 
\newautotheorem{hyp}{thm}{Assumption}
\providecommand*{\hypautorefname}{Assumption} 
\newtheorem*{prob}{Problem}
\theoremstyle{remark}
\newtheorem{step}{Schritt}
\newtheorem*{idea}{Idea}
\newautotheorem{rmk}{thm}{Remark}
\providecommand*{\rmkautorefname}{Remark}
\newtheorem*{notation}{Notation}
\newtheorem*{question}{Question}
\newtheorem{rmkc}{Remark}[section]
\providecommand*{\rmkcautorefname}{Remark}

\theoremstyle{definition}
\newautotheorem{defn}{thm}{Definition}
\providecommand*{\defnautorefname}{Definition} 
\newtheorem*{claim}{Claim}
\newautotheorem{ex}{thm}{Example}
\providecommand*{\exautorefname}{Example}
\newautotheorem{example}{thm}{Example}
\providecommand*{\exampleautorefname}{Example}
\newautotheorem{ass}{thm}{Assumption}
\providecommand\assautorefname{Assumption}
\newautotheorem{exc}{rmkc}{Example}
\providecommand*{\excautorefname}{Example}

\theoremstyle{examples}
\newtheorem*{exs}{Examples}

%----------------------------------------------------------------------------
%Section etc start with capital letter
\providecommand*{\sectionautorefname}{Section}
\renewcommand{\sectionautorefname}{Section}
\providecommand*{\subsectionautorefname}{Subsection}
\renewcommand{\subsectionautorefname}{Subsection}
\providecommand*{\appendixautorefname}{Appendix}
\renewcommand{\appendixautorefname}{Appendix}

%------------------------------------------------------------------------------

\setcounter{tocdepth}{2}
\maketitle
\tableofcontents

\subsection*{Introduction}
Moving boundary problems allow for modeling of multi-phase systems with separating boundaries evolving in time. The classical model for temperature evolution in a system of water and ice is the so called Stefan problem~\cite{stefanEis},
\begin{align*}
  \d v(t,x) &= \eta_+ \Delta v(t,x)\d t,\qquad x>x_*(t),\\
  \d v(t,x) &= \eta_- \Delta v(t,x)\d t,\qquad x<x_*(t),\\
  \d x_*(t) &= \rho\*\left(\nabla v(t,x_*(t)-) - \nabla v(t,x_*(t)+)\right)\d t,\\
  v(t,x_*(t)) &= 0.
\end{align*}
In this work, we will study stochastic and semilinear extension of the
Stefan problem in two directions. On one hand, we show a Wong-Zakai
type approximation result for stochastic moving boundary
problems. This gives an understanding of the stochastic problems with deterministic
extensions of the Stefan problem, which have been widely studied in
the second half of the 20th century. On the other hand, note that
proper separation of the two phases is attained only if the solution
remains negative on one side, and positive on the other side of the
moving interface, i.\,e.
\[ v(t,x) \geq 0,\quad \text{ if } x>x_*(t),\quad\text{and}\quad
  v(t,x)\leq 0,\quad \text{if }\quad x< x_*(t).\]
Using the Wong-Zakai-type approximation we show that under reasonable
``inward pointing drift'' and ``parallel to the boundary diffusion''
conditions on the coefficients, separation of phases is indeed
maintained for the solutions. 

The analysis builds on a framework of stochastic evolution equations
on domains of fractional power, as set up in~\cite{SFBPDir,
  SFBP1stOrder} to prove existence and uniqueness for semilinear stochastic
moving boundary problems.

More detailed, by a change of coordinates, these problems are linked with the so called forward invariance of closed sets for stochastic evolution equations,
\begin{equation}
  \d X_t = \left[A X_t + B(X_t)\right] \d t + C(X_t) \d W_t,\quad t\geq 0, \label{eq:SEEa}
\end{equation}
and their mild formulation
\begin{equation}
  X_t = X_0 + \int_0^t e^{(t-s)A} B(X_s) \d s + \int_0^t e^{(t-s)A}C(X_s) \d W_s,\quad t\geq 0, \label{eq:SEEa}
\end{equation}
on a separable Hilbert space $E$. Here, $W$ is a cylindrical Wiener
process on another separable Hilbert space $U$, with coefficients
$A:\dom(A)\to E$, a generator of a $C_0$ semigroup $(e^{tA})_t$ on
$E$, $B: \dom(B)\to E$ and $C: E\to \HS(U;E)$ . A subset $\MM\subset
E$ is called \emph{forward invariant} for~\eqref{eq:SEEa}, if for
every local solution $(X,\tau)$ of~\eqref{eq:SEEa} with initial data
$X_0\in \MM$ it holds that $X_t \in \MM$ on the stochastic interaval 
$\llbrak 0,\tau\llbrak$. A priori, we should assume that $\MM$ is invariant under $(S_t):=
(e^{tA})$, that is, $S_t\MM\subset \MM$ for all $t\geq 0$. 

A typical application is when~\eqref{eq:SEEa} describes an SPDE on $E= L^2$ and $\MM := L^2_+$ is the closed convex cone of non-negative functions in $L^2$. The main motivation in the literature for forward invariance in the framework of mild solutions seems to come from the question of positivity of solutions for HJM interest rate models; see for example~\cite{filipovicHJM, nakayamaViab, zabczykFwdInvariance}. 

Milian~\cite{milianComparison} used Yosida approximations to extend inward pointing and parallel to the boundary conditions from finite dimensional equations to prove a comparison result for stochastic evolution equations, under Lipschitz conditions of $B$ and $C$ on $E$. These results have been extended by Filipovic et al.~\cite{filipovicHJM}, to show positivity for HJM equations provided that point-wise versions of inward pointing and parallel to the boundary conditions are satisfied. 

Forward-invariance for deterministic evolution equations $(C = 0)$ was extensively studied in the 70s and 80s.
For mild and strong solutions of deterministic evolution equations, Pavel~\cite{pavelInvariant} and Jachimiak~\cite{jachimiakInvariance} have shown that under Lipschitz assumptions on $B:E\to E$, forward-invariance is equivalent to the Nagumo condition in the form
\begin{equation}
  \label{eq:nagumomild}
  \dist_E(S_{\epsilon}u_0 + \epsilon B(u_0); \MM) = o(\epsilon),\qquad \text{as } \epsilon \searrow 0.  
\end{equation}

Zabczyk~\cite{zabczykFwdInvariance} extended this result to stochastic
evolution equations additive noise. For multiplicative noise,
Nakayama~\cite{nakayamaViab} used a support theorem to extend the
Nagumo condition~\eqref{eq:nagumomild} to stochastic evolution
equations. Following this approach, we extend the Wong-Zakai
approximation theorem in~\cite{nakayamasupport} in two directions. On
one hand, to the situation when $-A$ is generator of an analytic
semigroup of negative type, but $B:\dom((-A)^\alpha) \to E$ for some
$\alpha \in [0,1)$ and $C: \dom((-A)^\alpha) \to
\HS(U;\dom((-A)^\alpha))$ and, on the other hand, that the
coefficients need to be Lipschitz continuous only on bounded sets and
the solution might explode in
finite time. For an overview on Wong-Zakai approximations in infinite dimensions, see also~\cite{twardowskaSurvey}, \cite{zabzcykWZA} and references therein. 

For the corresponding deterministic equations, general existence and
invariance results are given in~\cite{amannInvariant} for compact
semigroups. Since we are interested in SPDEs on  unbounded domains, we
do not have compact semigroups and will make use of the very general
result in~\cite{pruessInvariant} to show that the Nagumo condition is
a sufficient criterion for the forward invariance of this class of
stochastic evolution equation. Applying the results to a class of
stochastic moving boundary problems, we derive sufficient point-wise
criteria on the coefficients.

\subsection*{Notation}
For a stopping time $\tau$ we denote the closed stochastic interval by
$\llbrak 0,\tau \rrbrak := \{(t,\omega)\in [0,\infty)\times
\Omega\,\vert\, t\leq \tau(\omega)\}$. Respectively, we define
$\llbrak 0,\tau\llbrak$, $\rrbrak 0,\tau\llbrak$ and $\rrbrak
0,\tau\rrbrak$. For stochastic processes $X$ and $Y$ we say $X(t) =
Y(t)$ on $\llbrak 0,\tau\llbrak$, if equality holds for almost all
$\omega \in \Omega$ and all $t\geq 0$ such that $(t,\omega)\in \llbrak
0,\tau\llbrak$. Given Hilbert spaces $E$ and $H$, we write
$E\hookrightarrow H$ when $E$ is continuously and densely embedded
into $H$. As usual, we denote by $L^q$ the Lebesgue space, $q\geq 1$,
and with $H^s$, $s>0$, the Sobolev spaces of order $s>0$, for $k\in
\N$, $C^k$ will be the space of $k$-times continuously differentiable
functions, $C^k_b$ the subspace of $C^k$ where the elements and all derivatives up to
order $k$ are bounded and $BUC^k$ will be the subspace of all elements
which together with their derivatives up to order $k$ are bounded and
uniformly continuous. Moreover, for separable Hilbert spaces $U$ and $E$, $\Lbd(U,E)$ is the space of
linear continuous operators from $U$ to $E$ and $\HS(U;E)$ is the space of Hilbert-Schmidt operators from $U$ into $E$. The scalar product on $E$ will be denoted by $\scal{.}{.}{E}$. We will work only with real separable Hilbert spaces and implicitely use their complexification when necessary to apply results from the literature.

%%% Local Variables: 
%%% mode: latex
%%% TeX-master: "0paper"
%%% End: 

\section[Phase Separation and Approximation]{Phase Separation and Approximation for SMBPs}
\label{sec:positivityresults}

We work on a filtered probability space $(\Omega, \F, (\F_t), \PP)$
with the usual conditions on which a $\Id_U$-cylindrical Wiener
process $W$ taking values in the Hilbert space $U=L^2(\R)$ lives. For a CONS $(e_k)$ of $U$ and a series $(\beta_k)$ of independent real Brownian motions can represent $W$ by
\[ W_t = \sum_{k=1}^\infty e_k \beta_k(t),\quad t\in [0,T],\]
for a finite time horizon $T>0$. We denote by $\xi$ the spatially colored noise
\begin{equation}
  \label{eq:colorednoise}
  \xi_t(x) := T_\zeta W_t (x),\qquad T_\zeta w(x) := \int_\R \zeta(x,y) w(y) \d y,\quad x\in \R,
\end{equation}
for some integral kernel $\zeta: \R^2\rightarrow \R$. Recall that the
mapping $w\mapsto T_\zeta w(x)$ is Hilbert-Schmidt from $L^2(\R)$ into
$\R$ and $(\xi_t(x))_{t\geq 0}$ is a real Brownian motion for each
$x\in \R$. 

In that setting, consider the following class of stochastic 2-phase systems in one space dimension, 
\begin{equation}
  \left\{\begin{aligned}
      \d v(t,x) &= \left[\eta_+ \ddxx v +  \mu_+\left(x-x_*(t), v, \ddx v \right) \right] \d t\\
      & \qquad \qquad\qquad\qquad\qquad+ \sigma_+\left(x-x_*(t), v\right)\d \xi_t (x), \quad  x > x_*(t),\\
      \d v(t,x) &= \left[\eta_- \ddxx v -\mu_-\left(x-x_*(t), v, \ddx v \right) \right] \d t\\
      &\qquad\qquad\qquad\qquad\qquad  -\sigma_-\left(x-x_*(t), v\right) \d \xi_t(x), \quad  x < x_*(t),\\
      v(0,x) &= v_0(x),
    \end{aligned}\right.\label{eq:mbp}
\end{equation}
with inner boundary conditions
\begin{gather}
  \left\{\begin{aligned}
    \ddx v(t,x_*(t)+) &= \kappa_+  v(t,x_*(t)+), \\
    \ddx v(t,x_*(t)-) &= - \kappa_- v(t,x_*(t)-), 
  \end{aligned}\right.\label{eq:bcx}
\end{gather}
for $\kappa_+$, $\kappa_-\in (0,\infty)$ or $\kappa_+ = \kappa_-
=\infty$ and dynamics of the interface $x_*$ governed by
\begin{equation}
  \label{eq:interface_dirichlet}
  \left\{\begin{aligned}
  \ddt x_*(t) &= \varrho\left(\ddx v(t, x_*(t)+),\ddx v(t,x_*(t)-)\right),\\
  x_*(0) &= x_0,    
  \end{aligned}\right.
\end{equation}
when $\kappa_+=\kappa_-=\infty$ or, else,
\begin{equation}
  \label{eq:interface_1st}
  \left\{\begin{aligned}
  \ddt x_*(t) &= \varrho\left(v(t, x_*(t)+), v(t,x_*(t)-)\right),\\
  x_*(0) &= x_0,    
  \end{aligned}\right.
\end{equation}
where $t\in [0,T]$, $T\in (0,\infty)$, and $\mu_+$, $\mu_-: \R^3\rightarrow \R$, $\sigma_+$, $\sigma_-: \R^2\rightarrow \R$, $\varrho:\R^2\to \R$, and $\eta_+$, $\eta_->0$.

Here, the case where $\kappa_+ = \kappa_- = \infty$ is
interpreted as imposing Dirichlet boundary conditions on $v$. For this
case, existence and uniqueness of solutions in an analytically strong
framework have been shown in \cite{SFBPDir}. Neumann or Robin boundary
conditions, corresponding to $\kappa_+$, $\kappa_- <\infty$, were
investigated in \cite{SFBP1stOrder}. We will refer to these as "first
order" boundary conditions. In both cases, under sufficient
assumptions on the coefficients and initial data, there exists a
solution, in the sense that there exists a maximal predictable strictly positive
stopping time $\tau\leq T$, an $L^2(\R)\otimes \R$ predictable stochastic
process $(v,x_*)$ taking values in 
\[\bigcup_{x\in \R} \left( \Gamma(x)  \times \{x\}\right),\]
where
\[\Gamma(x) := \{ v\in H^2(\R\setminus\{x\}) \,\vert\, \ddx v(x+) =
  \kappa_+ v(x),\quad \ddx v(x-) = - \kappa_- v(x-)\}.\]
and a such that for all $\phi \in
C_0^\infty(\R)$, on $\llbrak 0,\tau\llbrak$, 
\begin{multline}
  \innerprod{v(t,.) - v_0}{\phi}{} = \int_0^t \innerprod{\bar\mu(., v(s,.), \nabla
    v(s,.), \Delta v(s,.), x_*(s))}{\phi}{} \d s\\
  + \int_0^t \innerprod{\bar \sigma(.,v(s,.))}{\phi}{} \d \xi_s \\
  + \int_0^t (v(s,x_*(s)-) - v(s,x_*(s)+)) \phi(x_*(s)) \d x_*(s),
\end{multline}
and either~\eqref{eq:interface_dirichlet} or~\eqref{eq:interface_1st}
being satisfied. Moreover, uniqueness holds true under sufficient regularity
constraints on the solution, cf. \cite[Theorem 2.11]{SFBPDir}, \cite[Theorem 1.15]{SFBP1stOrder}.

This describes a two-phase system with phase
change at the moving interface $x_*$, which itself is driven by local
imbalances of the system between both phases. In applications as
the two-phase Stefan problem or modeling of limit order books or
demand and supply in financial mathematics, cf. \cite{SFBP1stOrder}, one often expects that a proper separation of both phases is preserved, i.\,e. that $\d x$-a.\,e.
\begin{equation}
  \label{eq:vgleq0}
  v(t,x)\leq 0, \quad x<x_*(t)\quad \text{and}\quad v(t,x)\geq 0, \quad x>x_*(t),
\end{equation}
holds on $\llbrak 0,\tau\llbrak$, provided that it holds true for $t= 0$. 

In relative coordinates, namely
\begin{equation}
  \label{eq:trafo}
  u_1(t,x) := v(t,x_*(t)+x),\quad \text{and}\quad u_2(t,x) := v(t,x_*(t)-x),\quad x>0,
\end{equation}
the moving boundary problem becomes the coupled system of stochastic equations on $\R_+$,
\begin{gather}
  \begin{split}
    \d u_1(t,x) &= \left[\eta_+ \ddxx u_1 +  \mu_+\left(x, u_1, \ddx u_1 \right)+\ddt x_*(t)\ddx u_1(t,x)  \right] \d t,\\
    & \qquad \qquad\qquad\qquad\qquad+ \sigma_+\left(x, u_1\right)\d \xi_t (x_*(t)+x), \\
    \d u_2(t,x) &= \left[\eta_- \ddxx u  + \mu_-\left(-x, u_2, -\ddx u_2 \right) - \ddt x_*(t)\ddx u_2(t,x) \right] \d t,\\
    &\qquad\qquad\qquad\qquad\qquad + \sigma_-\left(-x, u_2\right) \d \xi_t(x_*(t)-x), \\
    u_1(0,x)&= u_{1,0}(x),\qquad u_2(0,x) = u_{2,0}(x),\qquad x_*(0) = x_0,
  \end{split}\label{eq:SPDE}
\end{gather}
with boundary conditions at $0$, for $t\in (0,T]$,
\begin{gather}
    u_1(t,0) = \kappa_+ \ddx u_1(t,0), \qquad
    u_2(t,0) = \kappa_- \ddx u_2(t,0).
    \label{eq:bc}
\end{gather}
Here,
\begin{equation}
  \label{eq:shiftedinit}
  u_{1,0}(x) = v_0(x_0+x),\qquad u_{2,0}(x_0-x).
\end{equation}
The interface conditions becomes 
\begin{equation}
  \label{eq:dxDirichlet}
  \ddt x_*(t) = \varrho\left(\ddx u_1(t, 0+), \ddx u_2(t,0+)\right),
\end{equation}
and else, when $\kappa_+$, $\kappa_- <\infty$, 
\begin{equation}
  \label{eq:dxNeumann}
  \ddt x_*(t) = \varrho\left(u_1(t, 0+), u_2(t,0+)\right).
\end{equation}

Let us shortly summarize the existence results for the centered
equations which we derived in \cite{SFBPDir} and \cite{SFBP1stOrder},
respectively. In both cases, there exist a unique maximal strong
solution $(u_1, u_2, x_*)$, up to a predictable stopping time
$\tau_*$, such that~\eqref{eq:SPDE} is satisfied in the sense of
$L^2(\R_+)$ integral equations and the boundary
conditions~\eqref{eq:bc} and either~\eqref{eq:dxDirichlet}
or~\eqref{eq:dxNeumann} hold true $\d t\otimes \PP$ almost everywhere,
on $\llbrak 0,\tau \llbrak$. Dirichlet boundary conditions, $u_1$,
$u_2$ take values in $C([0,\tau); H^2\cap H_0^1(\R_+))$ provided that
$u_{1,0}$, $u_{2,0} \in H^2\cap H^1_0(\R_+)$. For first order boundary
conditions and provided that $u_{1,0}$, $u_{2,0}\in H^1(\R_+)$, we get
a unique strong solution with almost surely $u_1$, $u_2 \in
C([0,\tau);H^1(\R_+)\cap L^2([0,\tau); H^2(\R_+))$. and $u_1$, $u_2$
fulfill~\eqref{eq:bc}. Moreover, note that the moving boundary problem~\eqref{eq:mbp} can be characterized completely by the centered equations~\eqref{eq:SPDE}.

Translated into the notion of the centered equations~\eqref{eq:SPDE}, the condition for phase-separation \eqref{eq:vgleq0} becomes 
\begin{equation}
  \label{eq:ugleq0}
  u_1(t,x) \geq 0,\quad\text{and}\quad u_2(t,x)\leq 0,\quad \text{for almost all }x\in \R_+. 
\end{equation}

A well-known criterion also from theory of finite dimensional equations are the so called inward-pointing-drift and parallel-to-the-boundary-diffusion conditions. Formulated point-wise, they read as follows.
\begin{ass}
  \label{a:inpoint}
  For all $x\geq 0$ it holds that, 
  \[  \mu_+(x,0,0) \geq 0,\qquad \mu_-(x,0,0) \leq 0,\qquad\text{and}\qquad \sigma_+(x,0) = \sigma_-(x,0)=0.\]
\end{ass}

To formulate the result for~\eqref{eq:SPDE}, we introduce the following closed convex cone,
\[ \MM := \left\{ (u_1,u_2,x)\in L^2\times L^2\times \R \,\vert\, u_1 \geq 0,\, u_2\leq 0, \; \text{$\d x$-a.\,e.}\right\}.\]
Phase separation in the sense of~\eqref{eq:ugleq0} is now
equivalent to so called forward invariance of $\MM$, supposed that the
coefficients and initial data are sufficiently regular, see
assumptions below.

\begin{thm}[Forward Invariance]
  \label{thm:fwdinv}
  Assume that \autoref{a:inpoint} and one of the following hold true.
  \begin{enumerate}[label=(\alph*)]
  \item $\kappa_+,\kappa_- <\infty$ and Assumptions~\ref{a:rho},
    \ref{a:zeta}, \ref{a:mu1}, \ref{a:sigma1} and~\ref{a:init1}, or, 
  \item Dirichlet boundary conditions at $0$ and,
    Assumptions~\ref{a:rho}, \ref{a:zeta}, \ref{a:mu}, \ref{a:sigma} and~\ref{a:init}
  \end{enumerate}
  Then, the set $\MM$ is forward invariant for~\eqref{eq:SPDE} in the sense that $(u_{0,1},u_{0,2},x_0)\in \MM$ yields that $(u_1(t,.),u_2(t,.),x_*(t))\in \MM$ on $\llbrak 0,\tau\llbrak$.
\end{thm}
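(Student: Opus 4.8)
The plan is to cast \eqref{eq:SPDE} together with the interface dynamics \eqref{eq:dxDirichlet}/\eqref{eq:dxNeumann} into the abstract form \eqref{eq:SEEa} on the Hilbert space $E := L^2(\R_+)\times L^2(\R_+)\times\R$, and then to invoke the abstract forward-invariance principle built in the body of the paper: a closed set that is invariant under the semigroup $(S_t)=(e^{tA})$ is forward invariant for \eqref{eq:SEEa} as soon as it satisfies the Nagumo condition \eqref{eq:nagumomild}, and this Nagumo condition is in turn read off from the pointwise sign conditions of \autoref{a:inpoint}. Concretely, $A$ acts as $\eta_+\ddxx$ on the first component equipped with the boundary condition \eqref{eq:bc} at $0$ (Robin if $\kappa_+<\infty$, Dirichlet if $\kappa_+=\infty$), as $\eta_-\ddxx$ on the second component with the corresponding condition, and as $0$ on the $\R$-component, so that $-A$ generates an analytic semigroup which, after the standard shift, is of negative type. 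The drift $B$ collects the terms $\mu_\pm(\cdot)$, the moving-frame transport terms $\pm\,\ddt x_*\,\ddx u_i$, and the interface velocity $\varrho(\cdot)$ in the $\R$-slot; the diffusion $C$ collects $\sigma_\pm(\cdot)$ composed with the colored-noise operator $T_\zeta$ evaluated along the shifted argument $x_*(t)\pm x$, with vanishing $\R$-slot. Assumptions~\ref{a:rho}, \ref{a:zeta}, \ref{a:mu} (resp.\ \ref{a:mu1}), \ref{a:sigma} (resp.\ \ref{a:sigma1}) are exactly what is needed so that, for a suitable $\alpha\in[\tfrac12,1)$, one has $B:\dom((-A)^\alpha)\to E$ and $C:\dom((-A)^\alpha)\to\HS(U;\dom((-A)^\alpha))$ Lipschitz on bounded sets, while \ref{a:init} (resp.\ \ref{a:init1}) places the initial datum in $\dom((-A)^\alpha)$; this is precisely the regularity in which the existence results recalled above produce the maximal mild solution $(u_1,u_2,x_*)$ on $\llbrak 0,\tau\rrbrak$.

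\textbf{Semigroup invariance.} Next I would check that $\MM=\MM_0\times\R$, with $\MM_0:=L^2_+(\R_+)\times L^2_-(\R_+)$, is invariant under $(S_t)$. Since $A$ is block-diagonal and vanishes on the $\R$-factor, $(S_t)$ is the identity there, and it remains to see that $e^{t\eta_+\ddxx}$ with the Robin/Dirichlet boundary condition preserves $L^2_+(\R_+)$ and that the analogous semigroup for the second block preserves $L^2_-(\R_+)$. This is the classical positivity — indeed sub-Markovianity, since $\kappa_\pm>0$ corresponds to a dissipative Robin boundary — of the heat semigroup with Dirichlet or Robin data, obtained e.g.\ from the Beurling--Deny criteria for the associated Dirichlet form. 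Hence $S_t\MM\subset\MM$.

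\textbf{Verification of the Nagumo condition.} This is the heart of the matter and where \autoref{a:inpoint} enters. For $w=(u_1,u_2,x)\in\MM\cap\dom((-A)^\alpha)$ write $B(w)=(b_1,b_2,b_3)$. Because $u_1\in H^1(\R_+)$ (in fact better), the standard fact that the gradient of a Sobolev function vanishes $\d x$-a.e.\ on its level sets gives $\ddx u_1=0$ a.e.\ on $\{u_1=0\}$; hence on that set the gradient and transport contributions drop and $b_1$ reduces to $\mu_+(x,0,0)\ge 0$, and symmetrically $b_2$ reduces on $\{u_2=0\}$ to the value controlled by the $\mu_-$-part of \autoref{a:inpoint}, which is $\le 0$ — while the $\R$-slot $b_3=\varrho(\cdot)$ is unconstrained because $\MM$ is all of $\R$ in that coordinate. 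Likewise, on $\{u_1=0\}$ the first block of $C(w)$ equals $\sigma_+(x,0)\,T_\zeta(\cdot)=0$ by \autoref{a:inpoint}, on $\{u_2=0\}$ the second block vanishes for the same reason, and the $\R$-block of $C$ is identically zero. Combining the $(S_t)$-invariance of $\MM$ with these pointwise drift-sign and ``parallel-to-the-boundary'' diffusion conditions, the general Nagumo-type criterion established in the abstract part of the paper applies and yields $\dist_E(S_\epsilon w+\epsilon B(w);\MM)=o(\epsilon)$ as $\epsilon\searrow 0$; since the coefficients are only locally Lipschitz one runs this on the equation stopped at the exit time of balls in $\dom((-A)^\alpha)$ and lets the radius tend to $\infty$. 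Invoking the abstract forward-invariance theorem — which itself rests on the Wong--Zakai approximation of \eqref{eq:SEEa} and the deterministic invariance result of \cite{pruessInvariant} — then gives that $\MM$ is forward invariant for \eqref{eq:SPDE}, i.e.\ $(u_{0,1},u_{0,2},x_0)\in\MM$ implies $(u_1(t,\cdot),u_2(t,\cdot),x_*(t))\in\MM$ on $\llbrak 0,\tau\llbrak$.

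\textbf{Main obstacle.} I expect the difficulty to be twofold. First, the genuinely nonlinear, non-local and gradient-dependent drift — in particular the moving-frame transport term $\ddt x_*\,\ddx u_i$ — forces the pointwise reduction to be carried out with care: one must know that the solution lies in $\dom((-A)^\alpha)$ for $\alpha$ large enough that $\ddx u_i$ is a bona fide function with a well-defined trace and that the cancellation of $\ddx u_i$ on $\{u_i=0\}$ is legitimate inside the Nemytskii composition $\mu_\pm$; this is exactly what pins down the regularity hypotheses \ref{a:mu}, \ref{a:sigma}, \ref{a:init} and their first-order analogues, and one must also confirm that \autoref{a:inpoint}, stated for the centred coefficients, lines up with the signs appearing in \eqref{eq:SPDE}. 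Second, because the coefficients are merely locally Lipschitz and the solution may explode, the otherwise clean Hilbert-space Nagumo/Wong--Zakai machinery must be localised — precisely one of the extensions the paper develops — so that, granting that extension, the present theorem reduces to matching hypotheses.
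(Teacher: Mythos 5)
Your proposal is correct and follows essentially the same route the paper takes: reformulate the centred system as a stochastic evolution equation, verify the Nagumo condition for $\MM$ pointwise via the observation that $\nabla u_i = 0$ a.e.\ on $\{u_i=0\}$ (so the transport and Nemytskii drift terms reduce to $\mu_\pm(x,0,0)$ and the noise to $\sigma_\pm(x,0)=0$), pass through the Wong--Zakai approximations to reduce to the $\omega$-wise deterministic problem, invoke the Pru{\ss}-type invariance result on each grid interval, and localise against exit times of balls in $\dom((-\cA)^\alpha)$ to handle local Lipschitz coefficients and possible explosion. One minor caveat worth noting: the paper verifies the Nagumo condition not for $B$ alone but for the full approximating nonlinearity $\Phi_{m,n}=\cB-\Sigma_\infty+\sum_k\cC(\cdot)e_k\,\dot\beta_k^m$, so one must also check that the Stratonovich correction $\Sigma_\infty$ vanishes on $\{u_i=0\}$ — this is automatic since $\Sigma_\infty$ carries a factor $\sigma_\pm(x,0)=0$, and your remark on $C$ already implies it, but it should be stated; likewise the deterministic invariance theorem must be applied iteratively on the partition $[\tfrac{k}{m}T,\tfrac{k+1}{m}T)$ because $\dot\beta_k^m$ is only piecewise constant.
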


\begin{cor}[Phase separation]
  Let the assumptions of \autoref{thm:fwdinv} hold true and assume
  that $\d x$-almost everywhere,
  \begin{equation}
    \label{eq:2}
    v_0(x) \leq 0,\quad x<x_0,\qquad v_0(x) \geq 0,\quad x>x_0.
  \end{equation}
  Then, on $\llbrak 0,\tau \llbrak$,
  \begin{equation}
    \label{eq:2}
    v_t(x) \leq 0,\quad x<x_*(t),\qquad v_t(x) \geq 0,\quad x>x_*(t).
  \end{equation}  
\end{cor}

We now list the assumptions.

\begin{ass}\label{a:rho}
  $\varrho:\R^2\to\R$ is locally Lipschitz continuous. More precisely, for all $N\in \N$ there exists an $L_{\varrho,N}$ such that for all $y$, $\tilde y\in \R^2$ with $\abs{y}$, $\abs{\tilde y}\leq N$ holds
  \begin{equation*}
    \abs{\varrho(y) - \varrho(\tilde y)} \leq L_{\varrho,N} \abs{y -\tilde y}.
  \end{equation*}
\end{ass}

\begin{ass}
  \label{a:zeta}
  $\zeta(.,y) \in C^4(\R)$ for all $y\in \R$ and $\tfrac{\partial^{i}}{\partial x^i}\zeta(x,.)\in L^2(\R)$ for all $x\in \R$, $i\in\{0,1,...,4\}$. Moreover, 
  \begin{equation}
    \sup_{x\in \R} \norm{\tfrac{\partial^{i}}{\partial x^i} \zeta(x,.)}{L^2(\R)} <\infty,\quad i=0,1,...,4.\label{eq:Azeta}
  \end{equation}
  For the remainder of this paper, we use the notation $\zeta^{(i)}:=\tfrac{\partial^{i}}{\partial x^i} \zeta$.
\end{ass}
\begin{rmk}
  When $\kappa_+$, $\kappa_- <\infty$, it suffices to assume that~\ref{a:zeta} holds for $i\in \{0,1,2,3\}$.
\end{rmk}
\begin{example}[Convolution]\label{ref:conv}
  Let $\zeta$ be a convolution kernel, i.\,e. $\zeta(x,y) := \zeta(x-y)$, $x$, $y\in \R$. If $\zeta\in C^{\infty}(\R)\cap H^4(\R)$, 
  then Assumption~\ref{a:zeta} is satisfied. In this case, one can write $T_\zeta = \zeta * (.)$.
\end{example}

\subsection{Dirichlet Boundary Conditions}
In comparison to the assumptions one requires to obtain existence from~\cite{SFBPDir}, we need additional regularity for the noise coefficient.
\begin{ass}\label{a:mu} For $\mu := \mu_+$, resp. $\mu:= \mu_-$ it holds that $\mu \in C^{1}(\R \times \R^2; \R)$, and
  \begin{enumerate}[label=(\roman*)]
  \item\label{ai:mugrowths} there exist $a\in L^2(\R)$, $b$, $\tilde b \in L^\infty_{loc}(\R^2; \R)$ such that for all $x\in \R$, $y$, $z\in \R$
    \[ \abs{\mu(x,y,z)},\,\abs{\ddx \mu(x,y,z)} \leq  b(y,z) \left(a(x) + \abs{y} + \abs{z}\right),\]
    and
    \[ \abs{\ddy \mu(x,y,z)},\; \abs{\ddz\mu(x,y,z)} \leq \tilde b(y,z), \]
  \item\label{ai:mulip} $\mu$ and its partial derivatives are locally Lipschitz continuous and the local Lipschitz constants are uniformly bounded in $x\in \R$.
  \end{enumerate}  
\end{ass}
\begin{ass}\label{a:sigma}
  For $\sigma:= \sigma_+$, resp. $\sigma:= \sigma_-$ it holds that $\sigma \in C^4(\R\times \R;\R)$, and
  \begin{enumerate}[label=(\roman*)]
  \item\label{ai:sigmagrowths} there exist $a_{i,j}\in L^2(\R)$ and $b_{i,j} \in L^\infty_{loc}(\R, \R_+)$, $i$, $j\in \N_0$, $i+j\leq 4$ such that
    \[  \abs{\tfrac{\partial^{i+j}}{\partial x^{i}\partial y^{j}} \sigma(x,y)} \leq  \begin{cases}   b_{i,j}(y) (a_{i,j}(x) + \abs{y}), & j = 0, \\ b_{i,j} (y), & j\neq 0. \end{cases}\]
  \item\label{ai:sigmalip} $\sigma$ and its partial derivatives are locally Lipschitz with Lipschitz constants independent of $x\in \R$.  
  \item\label{ai:sigmabc} $\sigma$ fulfill the boundary conditions
    \begin{equation}
      \sigma(0,0) = 0.\label{eq:sigmaD}
    \end{equation}
  \end{enumerate}
\end{ass}

\begin{ass}\label{a:init}
  Assume that $x_0\in \R$ and $v_0 \in \Gamma(x_0)$,
  i.\,e. $u_{1,0}$, $u_{2,0} \in H^2(\R_+)\cap H^1_0(\R_+)$.
\end{ass}

\subsection{Neumann and Robin Boundary Conditions}
For first order boundary conditions, we omit the $\ddx$-terms in the
dynamics of $x_*$, see~\eqref{eq:dxNeumann}, and so it suffices to
work on $H^1$ instead of $H^2$. Consequently, we can relax the spatial
regularity assumptions on the coefficients compared with the situation
of Dirichlet boundary problems.

\begin{ass}\label{a:mu1} For $\mu \in \{\mu_+,\mu_-\}$ it holds that $\mu: \R^3 \to \R$ and
  \begin{enumerate}[label=(\roman*)]
  \item\label{ai:mu1growths} there exist $a\in L^2(\R)$, $b \in L^\infty_{loc}(\R; \R)$ such that for all $x\in \R$, $y$, $z\in \R$
    \[ \abs{\mu(x,y,z)}  \leq   b(y)\left(a(x) + \abs{y} + \abs{z}\right),\]
  \item\label{ai:mu1lip} For all $x\in \R$, $\mu(x,.,.)$ is locally Lipschitz continuous and the local Lipschitz constants are uniformly bounded in $x\in \R$.
  \end{enumerate}  
\end{ass}
\begin{ass}\label{a:sigma1}
  For $\sigma:= \sigma_+$, resp. $\sigma:= \sigma_-(-.,.)$ holds $\sigma \in C^3(\R_{\geq 0}\times \R;\R)$, and
  \begin{enumerate}[label=(\roman*)]
    \item\label{ai:sigma1growths} there exist $a_{i,j}\in L^2(\R)$ and $b_{i,j} \in L^\infty_{loc}(\R, \R_+)$, $i$, $j\in \N_0$, $i+j\leq 3$ such that
      \[  \abs{\tfrac{\partial^{i+j}}{\partial x^{i}\partial y^{j}} \sigma(x,y)} \leq  \begin{cases}   b_{i,j}(y) (a_{i,j}(x) + \abs{y}), & j = 0, \\ b_{i,j} (y), & j\neq 0. \end{cases}\]
  \item\label{ai:sigma1lip} $\sigma$ and its partial derivatives are locally Lipschitz with Lipschitz constants independent of $x\in \R$.  
  \end{enumerate}
\end{ass}

\begin{ass}\label{a:init1}
  Assume that $x_0\in \R$ and $v_0 \in H^1(\R\setminus\{x_0\})$,
  i.\,e. $u_{1,0}$, $u_{2,0} \in H^1(\R_+)$.
\end{ass}

\subsection{Wong-Zakai Approximations}
As a first step, we will study an approximation technique reducing the forward invariance question to deterministic equations. To this end, we fix a finite time horizon $T>0$ and denote by $P_m$ the partitions 
\begin{equation}
  \label{eq:28}
  P_m := \left\{ \left[\tfrac{k}{m}T, \tfrac{k+1}{m}T\right] \,\vert\, k=0,...,m-1\right\}.  
\end{equation}
To map onto the time grid, we use the notation $[t]_m := \frac{k}m T$, where $k\in \{0,...,m\}$ is chosen such that $t\in [\frac{k}{m}T, \frac{k+1}{m}T)$. In order to approximate $\xi_t(x) = \sum_{k=1}^\infty T_\zeta e_k(x) \beta_k(t)$ we interpolate the Brownian motions linearly,
\begin{equation}
  \label{eq:bminterp}
  \beta_k^m(t) := \beta_k([t]_m) + (t-[t]_m) (\beta_k([t+\tfrac1m T]_m) - \beta_k([t]_m)),\quad t<T,
\end{equation}
and $\beta_k^m(T) := \beta_k(T)$. Then, we consider $\omega$-wise the partial differential equations,
\begin{gather}
  \left\{\begin{split}
      \ddt w^{m,n}_1(t,x) &= \eta_+ \ddxx w^{m,n}_1 +  \mu_+\left(x, w^{m,n}_1, \ddx w^{m,n}_1 \right)+\ddt x^{m,n}_*(t)\ddx w^{m,n}_1(t,x)\\
      &\quad  -\tfrac12  \sigma_+(x,w^{m,n}_1) \ddy \sigma_+(x,w^{m,n}_1) \norm{\zeta(x_*^{m,n}(t)+x,.)}{L^2}^2\\
      &\quad +   \sigma_+(x, w^{m,n}_1(t,x)) \sum_{k=1}^n T_\zeta e_k(x_*^{m,n}(t) + x)\dot \beta_k^m([t]_m), \\
      \ddt w^{m,n}_2(t,x) &= \eta_- \ddxx w^{m,n}_2   + \mu_-\left(-x, w^{m,n}_2, -\ddx w^{m,n}_2 \right) - \ddt x^{m,n}_*(t)\ddx w^{m,n}_2(t,x) \\
      &\quad - \tfrac12  \sigma_-(-x,w^{m,n}_1) \ddy \sigma_-(-x,w^{m,n}_1) \norm{\zeta(x_*^{m,n}(t)-x,.)}{L^2}^2\\
      &\quad + \sigma_-^k(-x,w^{m,n}_2(t,x))   \sum_{k=1}^n T_\zeta e_k( x^{m,n}_*(t) - x) \dot \beta_k^m([t]_m), \\
      w^{m,n}_1(0,x) &= u_{1,0}(x),\qquad w^{m,n}_2(0,x) =  u_{2,0}(x),
    \end{split}\right.\label{eq:PDEwza}
\end{gather}
$t\geq 0$, $x\in \R_+$, with interface condition $x_*(0) = x_0$ and either
\begin{equation}
  \label{eq:dxDirichletwza}
  \ddt x_*^{m,n}(t) = \varrho\Big(w^{m,n}_1(t,0), \ddx w^{m,n}_1(t,0), w^{m,n}_2(t,0),\ddx w^{m,n}_2(t,0)\Big),
\end{equation}
for the case $\kappa_+ = \kappa_- = \infty$ or, else,
\begin{equation}
  \label{eq:dxNeumannwza}
  \ddt x_*^{m,n}(t) = \varrho\Big(w^{m,n}_1(t,0), \ddx w^{m,n}_1(t,0), w^{m,n}_2(t,0),\ddx w^{m,n}_2(t,0)\Big),
\end{equation}
%%% Begin DIRICHLET
and with boundary conditions, for $t\in (0,T]$,
\begin{equation}
  \label{eq:47}
  \ddx w^{m,n}_1(t,0) = \kappa_+ w^{m,n}_1(t,0),\qquad   \ddx w^{m,n}_2(t,0) = \kappa_- w^{m,n}_2(t,0).
\end{equation}
Since explosion of the solutions might happen in finite time, let us introduce the exit times,
\begin{equation}
  \label{eq:46}
  \tau_k^{(r)} := \inf\{t\geq 0\, \vert\, t<\tau, \norm{u_1(t)}{H^k(\R_+)} + \norm{u_2(t)}{H^k(\R_+)} + \abs{x_*(t)} >r \},
\end{equation}
for $r>0$, $k\in \N$. The appearance of $\ddy \sigma$ in the dynamics
of~\eqref{eq:PDEwza} indicates already why we need to assume existence
of higher order derivatives in \autoref{a:sigma} and \ref{a:sigma1},
compared to the assumption for the existence results in~\cite{SFBPDir,SFBP1stOrder}.

\begin{thm}[Approximation 1]
  \label{thm:wzadirichlet}
  Assume that $\kappa_+=\kappa_- =\infty$ and that
  Assumptions~\ref{a:rho}, \ref{a:zeta}, \ref{a:mu} and~\ref{a:sigma}
  are fulfilled and denote by $(u_1,u_2,x_*)$ the unique solution
  of~\eqref{eq:SPDE} with \eqref{eq:dxDirichlet} on the maximal
  interval $\llbrak 0,\tau\llbrak$. Respectively, denote by
  $(w^{m,n}_1, w^{m,n}_2, x^{m,n}_*)$ the unique solutions
  of~\eqref{eq:PDEwza} with \eqref{eq:dxDirichletwza} and Dirichlet conditions at $0\pm$. Then, it holds for $i\in \{1,2\}$, $r>0$, that
  \[ \lim_{n\to \infty} \lim_{m\to \infty} w^{m,n}_i = u_i, \]
  and
  \[\lim_{n\to\infty} \lim_{m\to \infty} x_*^{m,n} = x_*,\]
  with uniform convergence on $[0,\tau^{(r)}_2]$ in $L^{2p}(\Omega; H^2\oplus H^2\oplus \R)$.
\end{thm}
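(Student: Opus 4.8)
The plan is to recast both the stochastic moving-boundary problem~\eqref{eq:SPDE}--\eqref{eq:dxDirichlet} and the approximating PDEs~\eqref{eq:PDEwza}--\eqref{eq:dxDirichletwza} as abstract stochastic (resp. random) evolution equations of the form~\eqref{eq:SEEa} on the Hilbert space $E = H^2(\R_+)\oplus H^2(\R_+)\oplus\R$, fitting the fractional-power framework of~\cite{SFBPDir}: the transport terms $\ddt x_*\,\ddx u_i$ and the boundary/interface couplings enter through a nonlinearity $B:\dom((-A)^\alpha)\to E$ with suitable $\alpha\in[0,1)$, and the diffusion through $C:\dom((-A)^\alpha)\to\HS(U;\dom((-A)^\alpha))$; the Wong--Zakai correction $-\tfrac12\sigma\,\ddy\sigma\,\norm{\zeta}{L^2}^2$ is exactly the Stratonovich-to-It\^o drift, so the two problems share the same abstract form once $W$ is replaced by its piecewise-linear interpolation. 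The convergence statement is then an instance of an abstract Wong--Zakai theorem for such equations; the bulk of the work is verifying that this abstract theorem applies, so I would (i) state/cite a Wong--Zakai result for~\eqref{eq:SEEa} with locally Lipschitz coefficients on $\dom((-A)^\alpha)$ and possible finite-time explosion — this is the extension advertised in the introduction following~\cite{nakayamasupport} — and (ii) check its hypotheses for the present coefficients.

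The key steps, in order, would be: First, fix $r>0$ and localize. On $\llbrak 0,\tau_2^{(r)}\rrbrak$ the solution $(u_1,u_2,x_*)$ stays in a bounded subset of $E$, so by \autoref{a:mu}, \autoref{a:sigma} and \autoref{a:rho} the coefficients $B$ and $C$ can be truncated to globally Lipschitz and bounded maps $B_r$, $C_r$ agreeing with $B$, $C$ on that ball; it suffices to prove convergence for the truncated equations and then undo the truncation on $[0,\tau_2^{(r)}]$. Second, establish the inner limit $m\to\infty$ for fixed $n$: here the noise is the finite sum $\sum_{k=1}^n T_\zeta e_k\,\beta_k$, and $\beta_k^m\to\beta_k$ uniformly a.s.; one shows $w^{m,n}_i\to w^{\infty,n}_i$ uniformly on $[0,\tau_2^{(r)}]$ in $L^{2p}(\Omega;E)$, where $w^{\infty,n}$ solves the Stratonovich SPDE driven by the truncated noise $\sum_{k=1}^n T_\zeta e_k\,\beta_k$ — this is a classical Wong--Zakai step for finitely many Brownian motions (e.g. Gy\"ongy--Shmatkov / Twardowska type arguments adapted to the analytic-semigroup, fractional-power setting), proved by a Gronwall estimate on the mild formulation after subtracting the two integral equations and controlling $\int_0^t e^{(t-s)A}(-A)^\alpha[\,\cdot\,]\d s$ via the smoothing bound $\norm{(-A)^\alpha e^{sA}}{} \lesssim s^{-\alpha}e^{-\omega s}$. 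Third, the outer limit $n\to\infty$: $\sum_{k=1}^n T_\zeta e_k\,\beta_k\to\xi$ in the appropriate Hilbert-Schmidt sense uniformly in time (using \autoref{a:zeta}), so $w^{\infty,n}\to u$ by stability of mild solutions of~\eqref{eq:SEEa} under convergence of the driving noise — again a Gronwall argument plus the Burkholder--Davis--Gundy inequality for the stochastic convolution on $\dom((-A)^\alpha)$. Fourth, assemble: combine the two limits with the localization, and note that the convergence of the $E$-valued processes entails the stated $H^2\oplus H^2\oplus\R$-convergence of $(w^{m,n}_1,w^{m,n}_2,x^{m,n}_*)$ to $(u_1,u_2,x_*)$ on $[0,\tau_2^{(r)}]$.

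The main obstacle I expect is Step~2: in the Wong--Zakai limit the piecewise-linear approximation $\dot\beta^m_k([t]_m)$ multiplies the spatially-shifted coefficient $T_\zeta e_k(x^{m,n}_*(t)+x)$, so the analysis is genuinely nonlinear in the coupled triple and one must simultaneously control the interface ODE $\ddt x^{m,n}_*$, whose right-hand side involves \emph{boundary traces} $\ddx w^{m,n}_i(t,0)$ — hence the need for the $H^2$-localization via $\tau_2^{(r)}$ and for the higher regularity of $\sigma$ in \autoref{a:sigma}. Keeping track of the $\alpha$-fractional norms of the correction term $\sigma\,\ddy\sigma\,\norm{\zeta}{L^2}^2$ and of the transport term $\ddt x^{m,n}_*\,\ddx w^{m,n}_i$ uniformly in $m$, so that the Gronwall constant does not blow up as the mesh $\to 0$, is the delicate point; the rest is routine once the abstract framework and the localization are in place. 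A secondary technical nuisance is that the correction drift for $w^{m,n}_2$ as written in~\eqref{eq:PDEwza} contains $w^{m,n}_1$ where $w^{m,n}_2$ is presumably intended, which one should read in the obvious way.
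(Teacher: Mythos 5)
Your overall plan reproduces the paper's strategy: recast the SMBP as a stochastic evolution equation in the fractional-power framework, prove an abstract Wong--Zakai theorem for locally Lipschitz coefficients in the order ``$m\to\infty$ then $n\to\infty$'' with truncation/localization, and then verify the hypotheses for the concrete coefficients --- exactly the split between the paper's Section~3 and Section~5.2. However, your base-space identification would fail as written. You take $E = H^2(\R_+)\oplus H^2(\R_+)\oplus\R$ with $B:\dom((-A)^\alpha)\to E$, $\alpha\in[0,1)$; but then the solution would have to live in $\dom((-A)^\alpha)\subsetneq\H^2$, whereas the Dirichlet solution has only $\DA$-regularity, and the transport term $\ddx u_i\,\varrho(\cI(u))$ in $\cB$ loses one derivative and so cannot map into $\H^2$ at all. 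The paper instead takes $E=\L^2$, so that $E_1=\DA$ and $\cB:E_1\to\H^{2\alpha'}$ is Lipschitz on bounded sets for $\alpha'<\tfrac14$, and then \emph{shifts the scale}: with $\tilde E:=E_\eta$ for $\eta<\tfrac14$ one has $\tilde E_\theta=E_1$ for $\theta=1-\eta<1$, and the abstract Wong--Zakai theorem is applied with base $\tilde E$ and solution space $\tilde E_\theta$. Without this shift the analytic-semigroup smoothing bounds that close the Gronwall argument are not available in the form you need.

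On the verification side, the delicate new hypothesis you must check --- and the reason \autoref{a:sigma} requires $\sigma\in C^4$ --- is that the Stratonovich correction operators $\Sigma_n$ are Lipschitz on bounded sets of $\H^2$ \emph{uniformly in $n$} and that $\Sigma_n\to\Sigma_\infty$ strongly in $\H^2$; this is proved via the Parseval identity $\sum_k\lvert T_\zeta e_k(x)\rvert^2=\norm{\zeta(x,\cdot)}{L^2}^2$, the Banach-algebra structure of $H^m$, and the Nemytskii-operator estimates from the appendix. Your ``main obstacle'' paragraph emphasizes uniformity in $m$ of the fractional norms of the correction term, but that is handled by the standard Nakayama-type estimates after truncation; the genuinely new verification is the $n$-uniformity and $\H^2$-convergence of $\Sigma_n$. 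Finally, ``undo the truncation'' is not quite innocuous: the exit times of the approximating random PDEs are not stopping times, which is why the paper's convergence~\eqref{eq:wzconvdirichlet} stops at $\varsigma^{(r+\epsilon)}_{m,n}\wedge\tau^{(r)}$ rather than $\tau^{(r)}_{m,n}$, and relies on the subsequence argument of \autoref{prop:Stopping}. You are right about the $w_1^{m,n}$-for-$w_2^{m,n}$ typo in~\eqref{eq:PDEwza}.
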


%%%% END Dirichlet
\begin{thm}[Approximation 2]
  \label{thm:wza1}
  Assume that $\kappa_+, \kappa_- < \infty$ and that
  Assumptions~\ref{a:rho}, \ref{a:zeta}, \ref{a:mu1}
  and~\ref{a:sigma1} are fulfilled and denote by $(u_1,u_2,x_*)$ the
  unique solution of~\eqref{eq:SPDE} with \eqref{eq:dxNeumann} on the
  maximal interval $\llbrak 0,\tau\llbrak$. Respectively, denote by
  $(w^{m,n}_1, w^{m,n}_2, x^{m,n}_*)$ the unique solutions
  of~\eqref{eq:PDEwza} with \eqref{eq:dxNeumannwza} and first order boundary conditions at $0\pm$. Then, it holds for $i\in \{1,2\}$, $r>0$, 
  \[ \lim_{n\to \infty} \lim_{m\to \infty} w^{m,n}_i = u_i, \]
  and
  \[\lim_{n\to\infty} \lim_{m\to \infty} x_*^{m,n} = x_*,\]
  with uniform convergence on $[0,\tau^{(r)}_1]$ in $L^{2p}(\Omega; H^1\oplus H^1\oplus \R)$.
\end{thm}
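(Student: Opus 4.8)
The plan is to deduce \autoref{thm:wza1} (in parallel to \autoref{thm:wzadirichlet}) from a general Wong--Zakai approximation theorem for the abstract equation~\eqref{eq:SEEa}, after recasting the centered system in the framework of stochastic evolution equations on fractional power spaces used in~\cite{SFBP1stOrder}. Concretely, I would work on $E := L^2(\R_+)\oplus L^2(\R_+)\oplus\R$ with $A$ acting as the (shifted) Laplacian subject to the first order boundary conditions~\eqref{eq:bc} on the two function components and as $0$ on $\R$, so that $-A$ generates an analytic semigroup and, the Robin condition being invisible below $H^{3/2}$, the relevant fractional power space is $\dom((-A)^{1/2}) = H^1(\R_+)\oplus H^1(\R_+)\oplus\R$ --- exactly the state space in the statement, i.e. $\alpha=\tfrac12$. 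The drift $B$ then collects the semilinear terms $\mu_\pm$, the transport terms $\pm\varrho(u_1(0),u_2(0))\,\ddx u_i$ and the interface velocity $\varrho(u_1(0),u_2(0))$; using \autoref{a:mu1}, \autoref{a:rho} and the Sobolev trace inequality on $\R_+$ (which controls $u_i(0)$ by the $H^1$-norm) I would check that $B\colon\dom((-A)^{1/2})\to E$ is locally Lipschitz. The noise operator $C$ sends $h\in U=L^2(\R)$ to $x\mapsto\sigma_\pm(\pm x,u_i(x))\,(T_\zeta h)(x_*\pm x)$, and here I expect the extra $x$-derivative of $\sigma_\pm$ demanded in~\autoref{a:sigma1} and the derivatives of $\zeta$ in~\autoref{a:zeta} to be exactly what makes $C$ a locally Lipschitz map into $\HS(U;H^1(\R_+))$ and, simultaneously, makes the It\^o--Stratonovich correction $-\tfrac12\sigma_\pm\ddy\sigma_\pm\norm{\zeta(x_*\pm\cdot,\cdot)}{L^2}^2$ --- which is $-\tfrac12\sum_k C'(u)[C(u)e_k]e_k$ written out in these coordinates, since $\sum_k\abs{(T_\zeta e_k)(y)}^2=\norm{\zeta(y,\cdot)}{L^2}^2$ --- a locally Lipschitz map into $E$. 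Once this translation is in place, \eqref{eq:PDEwza} is precisely the piecewise-linear interpolation scheme for~\eqref{eq:SEEa} with $W$ truncated to its first $n$ coordinates, and its $\omega$-wise well-posedness reduces to concatenating deterministic parabolic problems with frozen-in-time noise coefficients.

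I would then carry out the two iterated limits. For the inner limit ($m\to\infty$, $n$ fixed) the interpolated equation is driven only by $\beta_1,\dots,\beta_n$, whose linear interpolants converge in $C([0,T])$; invoking the general Wong--Zakai theorem --- in its version allowing an analytic semigroup, $\alpha\in[0,1)$, merely locally Lipschitz coefficients and finite-time explosion --- gives $\lim_{m\to\infty}w^{m,n}_i=u^n_i$ and $\lim_{m\to\infty}x^{m,n}_*=x^n_*$, uniformly on $[0,\tau^{(r)}_1]$ in $L^{2p}(\Omega;H^1\oplus H^1\oplus\R)$, where $(u^n_1,u^n_2,x^n_*)$ solves~\eqref{eq:SPDE}--\eqref{eq:dxNeumann} with $\xi$ replaced by the partial sum $\xi^n:=\sum_{k\le n}T_\zeta e_k\beta_k$ and with a residual drift $+\tfrac12\sigma_\pm\ddy\sigma_\pm\sum_{k>n}\abs{(T_\zeta e_k)(x^n_*\pm\cdot)}^2$ absorbed into $B$ (this residual appears because the correction kept in~\eqref{eq:PDEwza} is the full $\norm{\zeta}{L^2}^2$, whereas interpolation produces only the partial Stratonovich correction). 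For the outer limit ($n\to\infty$) I would note that, by~\autoref{a:zeta}, $\norm{\zeta(y,\cdot)}{L^2}^2=\sum_k\abs{(T_\zeta e_k)(y)}^2<\infty$ uniformly in $y$, so that both the noise discrepancy and the residual drift tend to $0$ uniformly over $u$ in bounded subsets of $\dom((-A)^{1/2})$ (via dominated convergence together with the $L^2$-growth bounds of~\autoref{a:sigma1}), and then feed this into the localized Gronwall / fixed-point stability estimate for~\eqref{eq:SEEa} from the well-posedness theory of~\cite{SFBP1stOrder} to conclude $(u^n_1,u^n_2,x^n_*)\to(u_1,u_2,x_*)$ on $[0,\tau^{(r)}_1]$ in the same norm. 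Composing the two limits is the assertion.

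The hard part, and the reason for the exit-time localization, is that $B$ is only locally Lipschitz --- the transport term $\varrho(u_1(0),u_2(0))\ddx u_i$ couples the $H^1$-regularity of $u_i$ to the trace-controlled, nonlinear interface velocity $\varrho$, so $B$ is of superlinear growth --- and solutions may explode in finite time, so no global-in-time bounds are available. Since the scheme~\eqref{eq:PDEwza} carries an \emph{additional} drift (the full correction, not its $n$-truncation), its a priori bounds are not literally those of the target equation, so I would run the comparison only up to $\tau^{(r)}_1$ intersected with the exit times of the $w^{m,n}$, establish $H^1\oplus H^1\oplus\R$ a priori bounds for these that are uniform in both $m$ and $n$ on that stochastic interval --- using that $\ddy\sigma_\pm$ is bounded by~\autoref{a:sigma1} and $\sup_y\norm{\zeta(y,\cdot)}{L^2}<\infty$ by~\autoref{a:zeta}, so the extra drift does not spoil the growth estimate --- and only then pass to the limit, checking that these exit times do not asymptotically drop below $\tau^{(r)}_1$. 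A secondary, more routine nuisance is that every coefficient is evaluated at the moving, solution-dependent argument $x^{m,n}_*(t)\pm x$; the translation-uniform form of the bounds in~\autoref{a:zeta} (namely~\eqref{eq:Azeta}) and~\autoref{a:sigma1} is exactly what keeps all of the above estimates independent of the interface position.
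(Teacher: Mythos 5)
Your proposal reproduces the paper's route faithfully in its broad structure: recast the centered system as a stochastic evolution equation on $\L^2 = L^2(\R_+)\oplus L^2(\R_+)\oplus\R$ with $E_{\sfrac12}=\H^1$ (so $\alpha=\sfrac12$), check that $\cB\colon E_{\sfrac12}\to E$ and $\cC\colon E_{\sfrac12}\to\HS(U;E_{\sfrac12})$ are locally Lipschitz (the trace estimate, \autoref{thm:Nlip}, \autoref{lem:HSest}), identify the correction as the Stratonovich term, use \autoref{a:zeta} to get bounds uniform in the interface position, and take the iterated limit $m\to\infty$ inner, $n\to\infty$ outer, stopping at $\tau^{(r)}_1$. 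This is precisely \autoref{prop:locLipE12}, \autoref{lem:SigmaLipH1}, \autoref{lem:SigmaConvH1} followed by an application of the abstract \autoref{thm:WZAlocal} leading to~\eqref{eq:wzconv1storder}.

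The one genuine divergence is in the localization step, and it is where your plan is substantially vaguer than what the paper does. You propose to establish a priori $\H^1$-bounds for the interpolated solutions $w^{m,n}_i$ uniform in $m$ and $n$ on the stochastic interval, and then compare on $[0,\tau_1^{(r)}]$. The paper avoids such uniform a priori estimates entirely: it truncates the coefficients with a $C^2_b$ cutoff $h_r(\norm{\cdot}{\alpha}^2)$ so that the truncated $\cB^{(r)},\cC^{(r)}$ become globally Lipschitz and bounded (\autoref{lem:trunc}), applies the global Wong--Zakai theorem (\autoref{thm:WZA}), and transfers convergence back to the local solutions through the path-wise exit-time machinery of \autoref{lem:exitdet}, \autoref{lem:convdet} and \autoref{prop:Stopping}, relying on the subsequence criterion and the careful inequality $\tau_\infty^{(r)}\le\varsigma_\infty^{(r+\epsilon)}\le\liminf_n\varsigma_n^{(r+\epsilon)}$ rather than on any moment estimate for the interpolated equations. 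This is the technically heavier but cleaner route, and it sidesteps the delicate point in your plan that the $w^{m,n}$ carry the \emph{full} correction $\Sigma_\infty$ while the interpolation produces only $\Sigma_n$, so a Gronwall argument for them would need to control the mismatch inside the bound. Finally, a small sign slip: the residual drift is $\Sigma_n-\Sigma_\infty=-\tfrac12\sigma\,\partial_y\sigma\sum_{k>n}\abs{T_\zeta e_k(\cdot)}^2$, i.e.\ with a minus sign, though of course this is immaterial since the residual vanishes as $n\to\infty$.
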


A more clean and precise formulation of the convergence in terms of the corresponding evolution equations is provided in~\eqref{eq:wzconv1storder} and~\eqref{eq:wzconvdirichlet}, respectively.

%%%% END first order
\subsection{Outline of the Proof}
\begin{itemize}
\item In \autoref{sec:Pre} we will recall concepts of a class of interpolation spaces from analysis, which will be used for understanding the stochastic evolution equations.
\item In \autoref{sec:wza}, we switch to the abstract framework of stochastic evolution equations and consider approximations of Wong-Zakai-type. This will provide the basis for extensions of properties of deterministic equations.
\item In \autoref{sec:inv}, we discuss forward invariance and viability results for deterministic evolution equations.
\item In \autoref{sec:positivityproofs}, we reformulate the centered
  equations as (stochastic) evolution equations. We show that the
  assumptions stated above are sufficient to apply the results from
  the abstract setting and finish the proofs of
  Theorem~\ref{thm:fwdinv}, \ref{thm:wzadirichlet} and
  \ref{thm:wza1}. The convergence statements from the latter two
  statements are stated explicitly in~\eqref{eq:wzconv1storder}
  and~\eqref{eq:wzconvdirichlet}.  
\item Some results on Fr\'echet differentiability of Nemytskii
  operators and of the noise coefficients appearing here are delayed
  to \autoref{A:nem} and~\autoref{A:noise}, respectively.
\end{itemize}

%%% Local Variables: 
%%% mode: latex
%%% TeX-master: "0paper"
%%% End: 

\section{Preliminaries}
\label{sec:Pre}
\begin{ass}
  \label{a:A}
    $A$ is a densely defined and sectorial operator with domain $\dom(A)\subset E$. Moreover, the resolvent set of $A$ contains $[0,\infty)$ and there exists a $M > 0$ such that the resolvent $R(\lambda,A)$ satisfies 
  \begin{equation}\label{eq:resolvent_eq}
    \norm{R(\lambda,A)}{} \le \frac{M}{1 + \lambda}, \qquad \text{for all $\lambda > 0$.}
  \end{equation}
\end{ass}
\begin{rmk}
          This assumption is equivalent to each of the following statements
  \begin{itemize}
  \item  Equation \eqref{eq:resolvent_eq} holds and the resolvent set of $A$ contains $0$ and a sector
    \[\{\lambda \in \C: |\arg \lambda | < \theta\}\]
    for some $\theta \in (\pi/2,\pi)$. 
  \item The operator $A$ is sectorial and $-A$ is positive in the sense of \cite{lunardiInterpol}.
  \item $A$ is the generator of an analytic $C_0$-semigroup $(S_t)_{t \geq 0}$ of negative type, 
  \end{itemize}
  In particular, there exist $\delta$, $M > 0$ such that $\norm{S_t}{} \le M e^{-\delta t}$. Assumption \eqref{a:A} also ensures that fractional powers of $-A$ are well defined. 
\end{rmk}

\begin{notation}
  For $\alpha \geq 0$ we write
  \begin{equation}
    \label{eq:Ealpha}
    E_\alpha :=  \dom((-A)^\alpha),\qquad \norm{h}{\alpha} := \norm{(- A)^\alpha h}{E}, \; h\in E_\alpha.
  \end{equation}
  It is known that also $E_\alpha$ with the induced scalar product is a separable Hilbert space. In particular, $\norm{.}{1}$ is equivalent to the graph norm of $A$ and the following continuous embedding relations hold for $\alpha \in [0,1]$:
  \begin{equation}
    \label{eq:15}
    \dom(A) = E_1 \hookrightarrow E_\alpha \hookrightarrow E_0 = E, 
  \end{equation}
\end{notation}

  Note that the restriction of $A$ to any $E_\alpha, \alpha \in [0,1]$ is again a densely defined and closed operator on $E_\alpha$. Moreover, it is the infinitesimal generator of the restriction of $S_t$ to $E_\alpha$, which is again an analytic (contraction) semigroup; see e.g. \cite[Ch.~II.5]{engelnagel}. We in particular have the following property.
\begin{prop}\label{prop:reiteration}
  For $\alpha$, $\beta\in \R$, and $u\in \dom((-A)^{\alpha + \beta})$ it holds that 
  \[ (-A)^{\alpha}((-A)^{\beta} u) = (-A)^{\beta}((-A)^{\alpha} u) = (-A)^{\alpha+\beta} u.\]
\end{prop}
\begin{rmk}\label{rmk:interpol:reiteration}
  The part of $A$ in $E_\alpha, \alpha >0$, is again a densely defined and closed operator on $E_\alpha$. Moreover, it is the infinitesimal generator of the restriction of $S_t$ to $E_\alpha$, which is again an analytic and strongly continuous semigroup. The same holds true on the spaces $D(\alpha,p)$ and $D(\alpha)$, with $\alpha \in (0,1)$, $p<\infty$ and for the extension of $S_t$ to $E_\alpha$, when $\alpha <0$; see e.g. \cite[Ch.~II.5]{engelnagel}.
\end{rmk}
The following regularity property of $S_t$ between different interpolation spaces $E_\alpha$, $\alpha \in [0,1]$ will be crucial in the proofs that follow. We derive it from results in~\cite{lunardiInterpol} on interpolation spaces.
\begin{lem}\label{lem:StHa}
  Let $\beta \geq 0$ and $\alpha > \beta$. Then, for all $t>0$ and $h\in E_{\beta}$,
  \[ \norm{S_t h}{\alpha} \leq K_{\alpha,\beta} t^{\beta - \alpha}  e^{-\delta t}\norm{h}{\beta}. \]
\end{lem}
Note that the factor in front of $\norm{h}{\beta}$ is integrable at time $t = 0$, which is the key property used in the following sections. On the other hand, to deal with the singularity in $0$, we will use an extended version of Gronwall's lemma, see~\cite[Lem 7.0.3]{lunardiAnalytic} or, for a proof,~\cite[p. 188]{henryGeo}.
\begin{lem}[Extended Gronwall's lemma]\label{lem:gronwall}
  Let $\alpha>0$, $a$, $b\geq 0$, $T \geq 0$, and $u:[0,T]\to\R$ be non-negative and integrable. If, for all $t\in [0,T]$,
  \begin{equation}
    \label{eq:6}
    u(t) \leq a + b\int_0^t u(s) (t-s)^{\alpha-1} \d s,
  \end{equation}
  then exists a constant $K_{\alpha, b,T}$, depending only on $\alpha$, $b$ and $T$, such that,
  \begin{equation}
    \label{eq:18}
    u(t) \leq a K_{\alpha,b,T},\qquad t\in [0,T].
  \end{equation}
\end{lem}

%%% Local Variables: 
%%% mode: latex
%%% TeX-master: "0paper"
%%% End: 

\section[Wong-Zakai Approximation]{Wong-Zakai Approximation for Stochastic Evolution Equations}
\label{sec:wza}

In this section, we discuss an approximation method of Wong-Zakai-type for a class of semilinear stochastic evolution equations in the mild framework. We extend the proof of Nakayama~\cite{nakayamasupport} to the situation where the linearity generates an analytic semigroup but the drift can be controlled only on a smaller subspace, and, in addition, consider the case when the coefficients are Lipschitz continuous only on bounded sets. The latter seems to be new even for the classical situation of Heath-Jarrow-Morton-type equations.

Let $U$ and $E$ be separable Hilbert spaces and $(\Omega, \F, (\F_t),
\PP)$ be a filtered probability space on which an $U$-valued
cylindrical Wiener process with covariance operator $\Id_U$
lives. Recall that there exist independent real $(\F_t)$-Brownian
motions $\beta^k$, $k\in \N$, such that for a CONS $(e_k)_{k\in \N}$ of $U$, $t\in [0,T]$,
\[ W_t = \sum_{k=1}^\infty e_k \beta^k(t).\]

We keep $(e_k)$ fixed for the remainder of this section and consider the stochastic evolution equation
\begin{equation} \label{eq:eeq}
  \left\{ \begin{aligned}
      \d X(t) &= \left[AX(t) + B(X(t))\right] \d t + C(X(t))\d W_t,\;t\geq 0,\\
      X(0) &= X_0,
    \end{aligned}\right.
\end{equation}
where $A$ is a linear operator on $E$ with domain $\dom(A)$, and $C: E\rightarrow \HS(U;E)$ the noise coefficient and $B: E\rightarrow E$ the non-linear part of the drift term, are assumed to be Borel measurable functions. Recall that a \emph{mild solution} of~\eqref{eq:eeq} on a Hilbert space $H\hookrightarrow E$ is a predictable $H$-valued process which satisfies the $H$-integral equation
\begin{equation}
  \label{eq:1}
  X(t) = S_t X_0 +   \int_0^t S_{t-s}B(X(s)) \d s + \int_0^t S_{t-s} C(X(s))\d W_s,
\end{equation}
on $\llbrak 0,\tau\llbrak$ for a strictly positive predictable stopping time $\tau>0$. The stopping time $\tau$ is called maximal if there does not exist a solution on a striclty larger stochastic interval and the solution is global, if $\tau = T$ almost surely. In the following, we will be in the situation where $H$ will be the domain of a fractional power of $-A$. In order for them to be well-defined, we need the following.

%%%%%%%%%%%%%%%%%%%%%%%%%%%%%%%%%%%%%%%%%%%%%%%%%%%%%%%%%%%
\begin{ass}
  \label{a:A}
  Let $A$ be the generator of an analytic $C_0$-semigroup of negative type on E.
\end{ass}
As in~\eqref{eq:Ealpha}, we set $E_\alpha := \dom((-A)^\alpha)$ for $\alpha\in\R$, and write $\norm{u}{\alpha} := \norm{(-A)^\alpha u}{E}$, $\alpha \in \R$. It is worth to recall that $A$ fulfills \autoref{a:A} on $E_\alpha$ for any $\alpha\in\R$, once it is fulfilled for $\alpha=0$. From now on, we keep $\alpha \in [0,1)$ fixed. Throughout this section, we will assume $X_0\in E_\alpha$ is deterministic. The following assumptions ensure that there exists a unique global mild solution of~\eqref{eq:eeq} on $E_\alpha$, cf.~\cite[Theorem~3.9]{SFBPDir}. 

\begin{ass}%
  $\;$\label{a:Ball}
  \begin{enumerate}[label=($B$.\roman*)]
  \item \label{a:driftlip} $B:E_\alpha\to E_0$ is Lipschitz continuous with Lipschitz constant $L_B$.
  \item \label{a:driftbd} For some constant $M_B$ and all $u\in E_\alpha$ it holds that $\norm{B(u)}{0}\leq M_B$. 
  \end{enumerate}  
\end{ass}

Introduce the shorthand,
\[ \sigma_k (u):= C(u)e_k,\; u\in E_\alpha,\;k\in\N.\]
Because $(e_k)$ is an CONS of $U$, we can decompose $C$ as
\begin{equation}
  \label{eq:Cek}
  C(u)w = \sum_{k=1}^\infty \scal{w}{e_k}{U}\sigma_k(u),
\end{equation}
for all $w\in U$, $u\in E_\alpha$. Keeping this mind, we define the Stratonovich or Wong-Zakai correction term for the projection of $C$ on the linear span $\linspan{e_1,...,e_n}$,
\[ \Sigma_n(u) := \frac12 \sum_{k=1}^n D\sigma_k(u)\sigma_k(u),\;u\in E_\alpha.\]

\begin{ass}%
  $\;$ \label{a:Call}
  \begin{enumerate}[label=($C$.\roman*)]
  \item $C: E_\alpha\to \HS(U;E_\alpha)$ is Lipschitz continuous with Lipschitz constant $L_C$
  \item There exists a constant $M_C$ such that $\norm{C(u)}{\HS(U;E_\alpha)} \leq M_C$, $\forall u\in E_\alpha$
  \item $\sigma_k:E_\alpha\to E_\alpha$, $k\in \N$ are twice Fr\'echet differentiable and the derivatives are bounded
  \item\label{ai:SigmaLip} There exists an $L_\Sigma >0$ such that \( \norm{\Sigma_n(u) - \Sigma_n(v)}{E_\alpha} \leq L_\Sigma\norm{u-v}{E_\alpha} ,\;\forall\, u,v\in E_\alpha,\,\forall n\in\N\).
  \item\label{ai:SigmaConv} There exists $\Sigma_\infty:E_\alpha\rightarrow E_\alpha$ such that
    \[\lim_{n\to \infty} \Sigma_n(u) = \Sigma_\infty(u),\qquad \forall\, u\in E_\alpha.\]
  \end{enumerate}
\end{ass}

\begin{rmk}
  The conditions on $\Sigma_n$ imply that $\Sigma$ is Lipschitz continuous on $E_\alpha$ with Lipschitz constant $L_\Sigma$. Moreover, $\Sigma_n\to \Sigma$ as $n\to \infty$ uniformly on compact sets in $E_\alpha$. Indeed, let $K\subset E_\alpha$ be compact. For $\epsilon >0$ let $\bigcup_{k=1}^N K(u_k,\delta)$ be the covering by open balls of radius $\delta:= \sfrac{\epsilon}{(4L_\Sigma)}$. Moreover, let $n_0\in \N$ such that for all $n\geq n_0$ it holds that
  \[\sup_{k=1,..,N} \norm{\Sigma_n(u_k) - \Sigma_\infty(u_k)}{\alpha}<\sfrac{\epsilon}{2}.\]
  Hence, for all $n\geq n_0$,
  \begin{gather*}
    \begin{split}
      \sup_{u\in K} \norm{\Sigma_n(u) - \Sigma_\infty(u)}{\alpha} &\leq \sup_{k=1,...,N}\sup_{u\in K(u_k,\delta)} \left[\norm{\Sigma_n(u) - \Sigma_n(u_k)}{\alpha} \right.\\
      &\qquad + \left.\norm{\Sigma_\infty(u) - \Sigma_\infty(u_k)}{\alpha} + \norm{\Sigma_n(u_k) - \Sigma_\infty(u_k)}{\alpha}\right]\\
      &< 2 L_\Sigma \delta + \frac{\epsilon}{2} = \epsilon.      
    \end{split}
  \end{gather*}
  
\end{rmk}
\begin{rmk}
  Even in the finite dimensional case with $A=0$ it is often assumed that $B$ is globally bounded and $C$ of class $C^2_b$, see \cite{wz} for the scalar case $E= \R$ and \cite{stroockvaradhanSupport}, \cite[Thm 7.2]{ikedaWatanabe} for Wong-Zakai approximations for stochastic differential equations on $\R^d$.  We will pass over to the more general case by truncation in Subsection~\ref{ssec:wzalocal} below.
\end{rmk}
Note that $B$ and $C$ can be trivially extended to Borel functions on
$E$, since $E_\alpha$ is an $E$-Borel set by continuity of the
imbedding $E_\alpha \hookrightarrow E$ and Kuratowski's
theorem. Moreover, the Lipschitz conditions on $B$ and $C$ yield
existance of a unique mild solution $X$ of~\eqref{eq:eeq} on
$E_{\alpha}$, cf. \cite[Theorem 3.9]{SFBPDir}. Moreover, $X \in C([0,T]; E_\alpha)$ a.\,s., and for all $p>1$
\begin{equation}
  \E{\sup_{0\leq t\leq T} \norm{X(t)}{\alpha}^{2p}} \leq K_{p,T} \left(1 + \E{\norm{X_0}{\alpha}^{2p}}\right). \label{eq:EsupXt}
\end{equation}

For $m\in \N$ we introduce the time grid $k\delta_m$, $k=0,...,m$ on $[0,T]$,
\[\delta_m:=\sfrac{T}{m},\quad \tmm := k\delta_m,\] 
where $k\in\{0,...,m-1\}$ is chosen such that $t\in [k\delta_m, (k+1)\delta_m)$, $t\in [0,T]$. On this grid, we have already defined the linearly interpolated Brownian motions $\beta_m^k$, see~\eqref{eq:bminterp}. 

By $\dot\beta_m^k$ we denote the time-derivatives which exist piece-wise on $[0,T]$. The approximating Wong-Zakai equations are then the random evolution equations, 
\begin{equation} \label{eq:eeqWZAbd}
  \left\{ \begin{aligned}
      \ddt {Z}_{m,n}(t) &= A Z_{m,n}(t) + B(Z_{m,n}(t)) - \Sigma_\infty(Z_{m,n}(t)) \\
      &\qquad\qquad\qquad\qquad\qquad\qquad+ \sum_{k=1}^n \sigma_k(Z_{m,n}(t)) \dot{\beta}^k_m(t),\; t>0,\\
      Z_{m,n}(0) &= X_0,
    \end{aligned}\right.
\end{equation}

\begin{thm}\label{thm:WZA}
  Let Assumption~\ref{a:A},~\ref{a:Ball} and~\ref{a:Call} be satisfied and denote respectively by $X$ and $Z_{m,n}$ the unique mild solutions of~\eqref{eq:eeq} and~\eqref{eq:eeqWZAbd} on $E_\alpha$ for initial data $X_0 \in E_\alpha$. Then, for all $p>1$ it holds that 
  \[ \lim_{n\to \infty} \lim_{m\to\infty} \E{\sup_{0\leq t\leq T} \norm{X(t) - Z_{m,n} (t) }{\alpha}^{2p}} = 0.\]
\end{thm}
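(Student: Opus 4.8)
The plan is to follow the classical Wong–Zakai scheme (as in Nakayama's support-theorem approach) but carefully tracking the interpolation-space norms $\norm{\cdot}{\alpha}$ and using the smoothing estimate of \autoref{lem:StHa} together with the singular Gronwall lemma \autoref{lem:gronwall}. First I would fix $p>1$ and write the mild formulations of $X$ and of $Z_{m,n}$ on $E_\alpha$: the solution $Z_{m,n}$ of the random ODE~\eqref{eq:eeqWZAbd} satisfies
\[
  Z_{m,n}(t) = S_t X_0 + \int_0^t S_{t-s}\big(B(Z_{m,n}(s)) - \Sigma_\infty(Z_{m,n}(s))\big)\d s + \sum_{k=1}^n \int_0^t S_{t-s}\sigma_k(Z_{m,n}(s))\dot\beta_m^k(s)\d s.
\]
The core of the argument is to rewrite the last (random, pathwise Bochner) term so that it is comparable to the genuine stochastic integral $\int_0^t S_{t-s}C(X(s))\d W_s$. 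The standard device is summation by parts on each grid interval: on $[k\delta_m,(k+1)\delta_m)$ one has $\dot\beta_m^j(s) = \delta_m^{-1}(\beta^j((k+1)\delta_m) - \beta^j(k\delta_m))$, so the pathwise integral becomes a sum of Itô-type increments plus correction terms; the quadratic-variation part of these corrections is exactly what produces $\Sigma_n(Z_{m,n})$ in the limit, cancelling the $-\Sigma_\infty$ term up to the error $\Sigma_n - \Sigma_\infty$ controlled by \autoref{a:Call}\ref{ai:SigmaConv} and the uniform-on-compacts convergence noted in the remark after \autoref{a:Call}.

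The estimation proceeds in two nested limits. For fixed $n$, let $m\to\infty$: here I would compare $Z_{m,n}$ with the mild solution $X_n$ of the SEE driven only by the first $n$ noise components (with drift $B - \Sigma_\infty + \Sigma_n$, so that the Stratonovich correction is consistent). Using \autoref{lem:StHa} to bound $\norm{S_{t-s}}{\mathcal L(E_0,E_\alpha)} \le K_{\alpha,0}(t-s)^{-\alpha}e^{-\delta(t-s)}$ for the drift and $\norm{S_{t-s}}{\mathcal L(E_\alpha,E_\alpha)}\le Me^{-\delta(t-s)}$ for the noise part, together with the Lipschitz bounds on $B$, $\Sigma_n$ (uniform in $n$ by~\ref{ai:SigmaLip}) and $\sigma_k$, a Burkholder–Davis–Gundy estimate in $E_\alpha$ for the stochastic-integral differences, and Hölder to absorb the $(t-s)^{-\alpha}$ singularity (legitimate since $\alpha<1$), I would arrive at an inequality of the form
\[
  \E{\sup_{0\le t\le T}\norm{Z_{m,n}(t) - X_n(t)}{\alpha}^{2p}} \le c_{n}\,\varepsilon_m + c_n \int_0^T \E{\sup_{0\le s\le t}\norm{Z_{m,n}(s)-X_n(s)}{\alpha}^{2p}}(t-s)^{2p(\alpha-1)/?}\d s,
\]
where $\varepsilon_m\to 0$ as $m\to\infty$ (this $\varepsilon_m$ collects the discretization error of the interpolated Brownian paths, modulus-of-continuity bounds, and the summation-by-parts remainder); \autoref{lem:gronwall} then yields $\lim_{m\to\infty}\E{\sup_t\norm{Z_{m,n}-X_n}{\alpha}^{2p}}=0$. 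Then let $n\to\infty$: comparing $X_n$ with $X$ (the difference being the truncated noise series, controlled by the tail of $\norm{C(u)}{\HS(U;E_\alpha)}^2$ via~\ref{a:Call}, plus $\Sigma_n\to\Sigma_\infty$) gives $\lim_{n\to\infty}\E{\sup_t\norm{X_n - X}{\alpha}^{2p}}=0$ by the same BDG-plus-singular-Gronwall machinery. Combining the two limits via the triangle inequality in $L^{2p}(\Omega;C([0,T];E_\alpha))$ finishes the proof.

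I expect the main obstacle to be the $m\to\infty$ step, specifically controlling the pathwise integral $\sum_k\int_0^t S_{t-s}\sigma_k(Z_{m,n}(s))\dot\beta_m^k(s)\d s$ and showing it converges to the Stratonovich integral uniformly in $t$ and in $L^{2p}(\Omega)$ in the $E_\alpha$-norm. The delicate points are: (i) the semigroup $S_{t-s}$ sits inside the integral, so the summation-by-parts manipulation must commute $S_{t-s}$ past the Brownian increments and estimate the resulting $\int S_{t-s}' = $ (derivative-in-time) terms, which are $\norm{\cdot}{\mathcal L(E_\alpha,E_\alpha)}$-singular of order $(t-s)^{-1}$ near $s=t$ — one must split the grid interval containing $t$ and treat it separately, borrowing regularity from $Z_{m,n}\in C([0,T];E_\alpha)$ and the a priori bound~\eqref{eq:EsupXt}; (ii) the Fréchet derivatives $D\sigma_k$ enter through $\sigma_k(Z(s)) - \sigma_k(Z(k\delta_m)) \approx D\sigma_k(Z(k\delta_m))(Z(s)-Z(k\delta_m))$, and one needs the boundedness of the second derivatives from~\ref{a:Call} to control the remainder uniformly; (iii) keeping all constants uniform in $k\le n$ so that the sum over $k$ only costs the (bounded) Hilbert–Schmidt norm, not a factor of $n$. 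Everything else — the BDG estimates, the Hölder absorption of $(t-s)^{-\alpha}$, and the two applications of \autoref{lem:gronwall} — is routine once this pathwise-to-Stratonovich comparison is in place.
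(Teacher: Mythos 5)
Your plan matches the paper's proof structure quite closely: both decompose the problem into first letting $m\to\infty$ (with $n$ fixed), comparing $Z_{m,n}$ with an intermediate solution $X_n$ of the SEE driven by the first $n$ noise directions and with drift $B-\Sigma_\infty+\Sigma_n$, and then letting $n\to\infty$, all assembled by triangle inequality and the singular Gronwall lemma. The one technical point where your description is imprecise concerns how the sup-over-time estimate for the stochastic convolution differences is obtained: you invoke a ``Burkholder--Davis--Gundy estimate in $E_\alpha$,'' but because the semigroup $S_{t-s}$ sits inside the integral, $\int_0^t S_{t-s}C(X(s))\,\d W_s$ is not a martingale in $t$ and BDG does not apply directly. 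The paper instead leans on the da~Prato--Kwapie\'n--Zabczyk factorization method (via Nakayama's Lemma~2.13, reproduced as \autoref{lem:IntdiffNoise}) to get the pathwise-versus-It\^o-integral comparison with a $\sup_t$ on the outside; this is the standard substitute for BDG in the stochastic-convolution setting and is what makes your step~(i) go through. Another minor routing difference: for the $n\to\infty$ step you propose a hands-on BDG/Gronwall argument for $X_n\to X$, whereas the paper simply cites the continuity-in-coefficients result of Kunze and van~Neerven (\autoref{prop:continuityCoeff}); both are valid, and the cited result is proved by essentially the machinery you describe. Otherwise your identification of the two-limit structure, the role of \autoref{lem:StHa} for absorbing the $(t-s)^{-\alpha}$ singularity in the drift (but not the noise, since $C$ maps into $\HS(U;E_\alpha)$), the restriction-to-the-grid device, and the uniformity of the $\Sigma_n$ Lipschitz constants is all consistent with the paper.
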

The proof is split into two main steps. First, we project the noise coefficient onto $\linspan{e_1,...,e_n}$, $n\in\N$ and obtain convergence. 

To this end, set $C_n :=   \sum_{k=1}^n\scal{.}{e_k}{U} \sigma_k$ and $B_n:= B +\Sigma_n - \Sigma_\infty$ for $u\in E_\alpha$. Since $C_n$ is the projection of $C$ to $\linspan{e_1,...,e_n}$ we get that $C_n: E_\alpha \to \HS(U;E_\alpha)$ is Lipschitz continuous, uniformly in $n\in\N$. By assumption, also $B_n$ is Lipschitz continuous with uniform Lipschitz constant. We denote by $X_n$ the unique global mild solutions of
\begin{equation}
  \label{eq:SEEnoiseapprox}
  \d X_n (t) = \left[A X_n(t) + B_n(X_n(t)) \right]\d t + C_n(X_t) \d W_t   
\end{equation}
The following result is now a direct application of the continuity of the solution map in the coefficients.
\begin{prop} \label{prop:continuityCoeff}
  Assume that Assumptions~\ref{a:A},~\ref{a:Ball} and~\ref{a:Call} hold true. Then, for the unique global mild solutions $X_n$ of~\eqref{eq:SEEnoiseapprox} and the unique mild solution $X$ of~\eqref{eq:eeq} on $E_\alpha$, for all $p>1$, it holds that
  \[\lim_{n\to \infty} \E{\sup_{0\leq t\leq T}\norm{X_n (t) - X(t)}{\alpha}^{2p}} = 0.\]
\end{prop}
\begin{proof}
  The convergence is a special situation of \cite[Prop. 3.2]{kunzeApprox}.
\end{proof}
This reduces the problem to the situation where $\sigma_k = 0$ for all but finitely many $k\in \N$. To keep a level of generality, we treat this case in a separate framework. \autoref{prop:continuityCoeff} together with \autoref{thm:WZAfinite} then yields \autoref{thm:WZA}.

\subsection{Finite Dimensional Noise}
To prove the convergence in the case of finite dimensional noise we
extend several estimates in~\cite{nakayamasupport} using the tools
from interpolation theory, see~\autoref{sec:Pre} .
We now consider the stochastic evolution equation, with finite dimensional noise,
\begin{gather}
  \label{eq:eeqn}
  \left\{\begin{split}
    \d X(t) &= \left[ A X(t) + B(X(t)) + \Sigma_n(X(t))\right] \d t+ \sum_{k=1}^n \sigma_k(X(t)) \d \beta^k_t,  \\
    X(0) &= X_0.    
  \end{split}\right.
\end{gather}
Here, $X_0\in E_\alpha$ is deterministic, as above. For this subsection, $\sigma_k$ are functions on $E_\alpha$ satisfying the following conditions.
\begin{ass}
  \label{a:noisefinite}
  For $k=1,...,n$ assume that $\sigma_k:E_\alpha\to E_\alpha$ is twice Fr\'echet differentiable, and that $\sigma_k$, $D\sigma_k$ and $D^2\sigma_k$ are bounded in $E_\alpha$.
\end{ass}
We define, similar to the situation above,
\[ \Sigma_n (u) := \frac12 \sum_{k=1}^n D\sigma_k(u) \sigma(u),\quad u\in E_\alpha.\]
\begin{lem}
  \label{lem:SigmanLipBd}
  $\Sigma_n: E_\alpha \to E_\alpha$ is bounded and Lipschitz continuous.
\end{lem}
\begin{proof}
  By assumption, for all $k=1,...,n$, $\sigma_k$ and $D\sigma_k$ are differentiable with bounded derivative. For $u$, $v\in E_\alpha$, it holds that 
  \begin{multline*}
    \norm{D\sigma_k(u)\sigma_k(u) - D\sigma_k(v)\sigma_k(v)}{\alpha}\leq \\
    \leq \norm{D\sigma_k(u)}{\Lbd(E_\alpha)} \norm{u-v}{\alpha} + \norm{D\sigma_k(u) - D\sigma_k(v)}{\Lbd(E_\alpha)} \norm{v}{E_\alpha}.
  \end{multline*}
  Hence, application of the mean value theorem to $D\sigma_k$ yields
  Lipschitz continuity of the maps $u\mapsto D\sigma_k(u)\sigma_k(u)$,
  $k\in \N$. Since the finite sum of Lipschitz functions is Lipschitz again the lemma is proven.
\end{proof}
We define $\beta^k_m$, $k=1,...,n$, in the same way as in~\eqref{eq:bminterp} so that the Wong-Zakai approximation of~\eqref{eq:eeqn} will be the solution of the random evolution equation
\begin{gather}
  \label{eq:eeqnWZ}
\left\{  \begin{split}
    \ddt Z_m (t) &=  A Z_m(t) + B(Z_m(t))+ \sum_{k=1}^n \sigma_k(Z_m(t))\dot\beta^k_m(t),\quad t\in [0,T]\\
  Z_m(0) &=X_0.    
\end{split}\right.
\end{gather}
On $[i\delta_m, (i+1)\delta_m)$, $i=0,...,m-1$, the coefficients of~\eqref{eq:eeqnWZ} are time-homogeneous and satisfy global Lipschitz assumptions. A standard existence results for deterministic equations, e.\,g.~\cite[Theorem 6.3.3]{pazySemigroups} yields existence and uniqueness of a global mild solution of the random equation, $\omega$-wise. By concatenation, we observe existence and uniqueness of a global continuous mild solution on $E_\alpha$.

\begin{rmk}
  The correction term $\Sigma_n$ can be removed from~\eqref{eq:eeqn} but then has to appear in~\eqref{eq:eeqnWZ} with a negative sign. Recall that the occurrence of the Stratonowitch correction term was quite surprising and an important step in the understanding of stochastic differential equations in terms of physical systems~\cite{wz}.
\end{rmk}
\begin{thm}
  \label{thm:WZAfinite}
  Let \autoref{a:A}, \autoref{a:Ball} and \autoref{a:noisefinite} hold true and denote by $X$ and $Z_m$ respectively the unique mild solution of~\eqref{eq:eeqn} and~\eqref{eq:eeqnWZ}, for $m\in\N$. Then for any $p>1$,
  \begin{equation}
    \label{eq:3}
    \lim_{m\to \infty}\E{\sup_{0\leq t\leq T}\norm{X(t) - Z_m(t)}{\alpha}^{2p}} = 0.
  \end{equation}
\end{thm}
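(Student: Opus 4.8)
The plan is to follow and extend Nakayama's argument \cite{nakayamasupport}, but working throughout in the interpolation space $E_\alpha$ and using \autoref{lem:StHa} to absorb the singularity $t^{-\alpha}$ introduced by the analytic semigroup whenever we estimate $S_{t-s}B(\cdot)$ or $S_{t-s}\sigma_k(\cdot)$ in the $E_\alpha$-norm. First I would fix the grid $P_m$ and, for a test index, rewrite the mild form of the Stratonovich SDE \eqref{eq:eeqn} so that on each subinterval $[t]_m \leq s \leq t$ the stochastic integral against $\beta^k$ is compared with the Riemann--Stieltjes integral against $\dot\beta^k_m$ appearing in \eqref{eq:eeqnWZ}; the difference, after an Itô expansion of $\sigma_k(X(s)) - \sigma_k(X([s]_m))$, produces precisely the correction term $\Sigma_n$ together with a ``remainder'' martingale term and quadratic-variation terms of order $\delta_m$. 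This is the classical Wong--Zakai cancellation, but each application of $\sigma_k$, $D\sigma_k$, $D^2\sigma_k$ now has to be estimated in $\Lbd(E_\alpha)$-operator norm, which is exactly what \autoref{a:noisefinite} grants us, and each $S_{t-s}$ factor costs a factor $(t-s)^{-\alpha}e^{-\delta(t-s)}$ which is integrable at $0$.

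The key steps, in order, would be: (i) a priori bounds — from \eqref{eq:EsupXt} we have $\E\sup_t \norm{X(t)}{\alpha}^{2p} < \infty$, and an analogous bound for $Z_m$ uniform in $m$, obtained by a Gronwall argument on \eqref{eq:eeqnWZ} using boundedness of $B$, $\sigma_k$, $\Sigma_n$ on $E_\alpha$ and the fact that $\sum_k |\dot\beta^k_m(s)|$ is controlled, in $L^{2p}$, by the modulus of continuity of Brownian motion on the grid; (ii) a ``frozen-coefficient'' estimate showing $\E\sup_{0 \leq s \leq \delta_m}\norm{X(s) - X([s]_m)}{\alpha}^{2p}$ and the same for $Z_m$ are $o(1)$ as $m \to \infty$, with explicit rate in $\delta_m$ — here the singular factor $(t-s)^{-\alpha}$ is handled by \autoref{lem:StHa} and the resulting time integral $\int_0^{\delta_m}(t-s)^{-\alpha}\,ds = O(\delta_m^{1-\alpha})$; (iii) the main comparison: writing $e_m(t) := X(t) - Z_m(t)$, combine the mild formulations, insert and subtract $\sigma_k(X([s]_m))$, use the Burkholder--Davis--Gundy inequality on the $E_\alpha$-valued stochastic integrals and the Lipschitz continuity of $B$, $\Sigma_n$ (\autoref{lem:SigmanLipBd}) and $\sigma_k$ to arrive at an inequality of the form
\[
  \E\sup_{0 \leq r \leq t}\norm{e_m(r)}{\alpha}^{2p} \leq \varepsilon_m + K \int_0^t (t-s)^{-\alpha}\,\E\sup_{0 \leq r \leq s}\norm{e_m(r)}{\alpha}^{2p}\,ds,
\]
where $\varepsilon_m \to 0$ collects all the Wong--Zakai remainder terms estimated in steps (i)--(ii); (iv) conclude by the extended Gronwall lemma \autoref{lem:gronwall} with exponent $1-\alpha > 0$, giving $\E\sup_t \norm{e_m(t)}{\alpha}^{2p} \leq \varepsilon_m K_{1-\alpha, K, T} \to 0$.

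The main obstacle I expect is step (iii), specifically the careful bookkeeping of the cross terms when the Stratonovich correction is extracted in the $E_\alpha$-norm: one must expand $\sigma_k(X(s))$ around $X([s]_m)$ to second order, identify the term $\tfrac12 D\sigma_k(X([s]_m))\sigma_k(X([s]_m))$ that pairs with the increment of the quadratic variation of $\beta^k$ to cancel $\Sigma_n$, and control the genuinely second-order remainder (involving $D^2\sigma_k$ and $\norm{X(s)-X([s]_m)}{\alpha}^2$) as well as the martingale part $\int \big(D\sigma_k(X([s]_m))(\,\cdot\,) - \text{(grid-frozen version)}\big)\,d\beta^k$. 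Each of these has to survive multiplication by the unbounded-at-zero kernel $(t-s)^{-\alpha}$, so one must track that the vanishing rates from step (ii) beat the loss $\delta_m^{-\alpha}$ coming from a single grid cell — this is where the restriction $\alpha < 1$ is essential and where Nakayama's estimates have to be genuinely reworked rather than quoted. A secondary, more technical point is justifying that $\Sigma_n$ and the derivatives $D\sigma_k$, $D^2\sigma_k$ really map into and act boundedly on $E_\alpha$ (not merely $E$); this is assumed in \autoref{a:noisefinite} and, for the concrete coefficients, deferred to \autoref{A:noise}, so here it can simply be invoked.
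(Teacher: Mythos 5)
Your overall architecture --- extend Nakayama, replace every free semigroup factor by the $(t-s)^{-\alpha}$ bound of \autoref{lem:StHa}, and close with the extended Gronwall lemma \autoref{lem:gronwall} --- is exactly the paper's. But there is a genuine gap in step (ii), and it propagates into step (iii).

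You propose to control $\E\sup_s\norm{X(s)-X([s]_m)}{\alpha}^{2p}$ ``with explicit rate in $\delta_m$.'' Decomposing
\[
X(s)-X([s]_m) \;=\; \bigl(S_{s-[s]_m}-I\bigr)X([s]_m) \;+\; \bigl[X(s)-S_{s-[s]_m}X([s]_m)\bigr],
\]
the second bracket does have an explicit rate (drift costs $\delta_m^{1-\alpha}$ via \autoref{lem:StHa}, noise costs $\delta_m^{(p-1)/2p}$, and so on), but the first term $(S_\delta-I)X([s]_m)$ has \emph{no} uniform rate on $E_\alpha$: an analytic semigroup is only strongly continuous at $0$, so $\norm{(S_\delta-I)u}{\alpha}\to 0$ pointwise but without any $\delta^\gamma$ bound unless $u\in E_\beta$ for some $\beta>\alpha$, which the mild solution does not enjoy in general. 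So the ``frozen-coefficient'' quantity you chose does not actually vanish at a rate, and inserting and subtracting $\sigma_k(X([s]_m))$ in step (iii) inherits the same defect through the Lipschitz bound.

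The paper's fix is to compare against the \emph{semigroup-propagated} frozen values,
\[
\bar X_m(t) := S_{t-[t]_m}X([t]_m),\qquad \bar Z_m(t) := S_{t-[t]_m}Z_m([t]_m),
\]
so that the offending factor $(S_\delta-I)$ cancels identically and only the short-interval drift, noise and correction integrals remain; these do carry explicit rates $\delta_m^{1-\alpha}$, $\delta_m^{(p-1)/p}$, etc.\ (\autoref{lem:Eubu} and \autoref{lem:EXbX}). The final Gronwall estimate is then set up for $\bar X_m-\bar Z_m$, and the triangle inequality $X-Z_m = (X-\bar X_m)+(\bar X_m-\bar Z_m)+(\bar Z_m-Z_m)$ collects the pieces. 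A secondary remark: the paper does \emph{not} rework the entire Wong--Zakai cancellation for the noise term from scratch --- since the $\sigma_k$ satisfy exactly the assumptions in \cite{nakayamasupport} on $E_\alpha$, the corresponding noise estimate (\autoref{lem:IntdiffNoise}) is quoted essentially verbatim, and only the drift contribution $S_{t-s}B(\cdot)$ needs the $(t-s)^{-\alpha}$-kernel/extended-Gronwall treatment you describe in step (iv). So the modifications are more localized than you anticipate, but the key missing idea is the use of $\bar X_m,\bar Z_m$ rather than $X([s]_m)$.
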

This theorem is an extension of \cite[Prop 2.1]{nakayamasupport}. We roughly follow its proof from \cite[Section 2]{nakayamasupport} but perform the necessary changes. Since the noise terms $\sigma_k$ and the linear operator $A$ fulfill the assumptions in the reference, on $E_\alpha$, Lemmas 2.9, 2.11 and 2.12 in \cite{nakayamasupport} also apply in our setting. These parts are the basis for \cite[Lemma 2.13]{nakayamasupport}, which reads in our framework as follows.
\begin{lem}\label{lem:IntdiffNoise}
  For $p>1$ exists a constant $K_{T,p}$ and a sequence $(\epsilon_m)_{m\in\N}$ with
  \[  \lim_{m\to \infty}  \epsilon_m = 0,\]
  such that
  \begin{multline*}
    \EE\left[\sup_{0\leq t\leq T} \left\lVert \int_0^{\tmm} S_{t-s} \sigma_k(Z_m(s)) \dot \beta^k_m(s)\d s - \int_0^{\tmm} S_{t-s} \sigma_k(X(s)) \d\beta^k(s)\right.\right.\\ -\left.\left. \frac12 \int_0^{\tmm} S_{t-s} D\sigma_k(X(s))\sigma_k(s)\d s\right\rVert_{\alpha}^{2p}\right] \\
    \leq K_{T,p} \int_0^T\E{\sup_{0\leq s\leq t} \norm{Z_m(s) - X(s)}{\alpha}^{2p}} \d s + \epsilon_m.
  \end{multline*}
\end{lem}
 The proof of this lemma is based on the da Prato-Kwapien-Zabczyk factorization method which was introduced in~\cite{dPKZ} and has also been used, for instance, in~\cite{dPZstochConv} and~\cite{dPZinf} to show continuity of the stochastic convolution. Since the noise operator $C$ satisfies the ``standard'' assumptions, there are no modifications necessary in the proof. However, we have to adapt the parts involving also the drift term $B$, which does not fulfill the assumptions in the reference~\cite{nakayamasupport}. We now restrict the solutions onto the time grid $\{ i\delta_m\,\vert\, i=0,...,m\}$. For $m\in \N$ we define
\begin{equation*}
  \bar Z_m(t) := S_{t-\tmm} Z_m(\tmm),\quad\text{and}\quad\bar X_m(t) := S_{t-\tmm} X(\tmm).
\end{equation*}
At this point, recall that we have set $\tmm := i \delta_m$ for $i <m$ so that $t\in [i\delta_m, (i+1)\delta_m)$.
\begin{lem}
  \label{lem:Eubu}
  There exists a constant $K_{n,T,p,\alpha}>0$ such that for all $m\in \N$
  \begin{equation*}
    \E{\sup_{0\leq t\leq T}\norm{Z_m(t) - \bar Z_m(t)}{\alpha}^{2p}} \leq K_{n,T,p,\alpha} (m^{-2p(1-\alpha)} + m^{1-p}).
  \end{equation*}
\end{lem}

\begin{proof}
  We first decompose, for $t\in [0,T]$,
  \begin{equation}
 \label{eq:ubudiff}
    Z_m(t) - \bar Z_m(t) 
    = \int_{\tmm}^t S_{t-s}B(Z_m(s))\d s
    \quad + \sum_{k=1}^n \int_{\tmm}^t S_{t-s} \sigma_k(Z_m(s)) \dot\beta^k_m(s) \d s.
  \end{equation}
  From boundedness of $B$ and \autoref{lem:StHa} the first integral can be controlled by
  \begin{equation}
    \label{eq:6}
    \norm{ \int_{\tmm}^t S_{t-s}B(Z_m(s))\d s}{\alpha} \leq \int_{\tmm}^t K_\alpha M_B (t-s)^{-\alpha} \d s 
    \leq \frac{K_\alpha M_B}{1-\alpha} \delta_m^{1-\alpha} 
  \end{equation}
  To bound the second term first note that 
  \begin{equation*}
    \norm{\int_{\tmm}^t S_{t-s}\sigma_k(\xi_m(s))\dot{\beta}^j_m(s)\d s}{\alpha} \leq M_{k} \abs{\beta^k(\tmp) - \beta^k (\tmm)}
  \end{equation*}
  and then, following a standard procedure,
  \begin{equation*}
    \EE\left[ \sup_{0\leq t\leq T}\abs{\beta^k(\tmp) - \beta^k(\tmm)}^{2p}\right] \leq  \delta_m^p m\E{\abs{\beta_k(1)}^{2p}} =: K_{T,p} m^{1-p}.
  \end{equation*}
  We put everything together, 
  \begin{multline*}
    \E{\sup_{0\leq t\leq T}\norm{ Z_m(t) - \bar Z_m(t)}{\alpha}^{2p}} \leq  K_{2p} M_B^{2p} K_\alpha^{2p} \delta_m^{2p(1-\alpha)} + n M_k^{2p} \tilde K_{T,p}m^{1-p}\\
    \leq K_{p, T, \alpha} (m^{-2p(1-\alpha)} + n m^{1-p}).\qquad\qquad\qquad \qedhere
  \end{multline*}
\end{proof}

\begin{lem}
  \label{lem:EXbX}
  There exists a constant $K_{n,T,p}>0$ such that for all $m\in \N$
  \[ \E{\sup_{0\leq t\leq T} \norm{X(t) - \bar X_m(t)}{\alpha}^{2p}} \leq K_{n,T,p,\alpha}\left(m^{-2p\alpha} + m^{1-p}\right).\]
\end{lem}
\begin{proof}
  First, we write
  \begin{gather}
    \begin{split}
      X(t) - \bar X_m(t) &= \int_{[t]_m^-}^t S_{t-s} B(X(s)) \d s + \frac12 \sum_{k=1}^n \int_{\tmm}^t S_{t-s} D\sigma_k(X(s)) \sigma_k(X(s)) \d s\\
      &\qquad\qquad +\sum_{k=1}^n \int_{\tmm}^t S_{t-s} \sigma_k(X(s))\d \beta^k(s),\;t\in [0,T].
    \end{split}
  \end{gather}
  The first term can be controlled by $m^{-2p(1-\alpha)}$ in the same way as in the proof of \autoref{lem:Eubu}. For the Stratonovitch correction we get
  \[\norm{\int_{\tmm}^t S_{t-s} D\sigma_k(X(s)) \sigma_k(X(s)) \d s}{\alpha} \leq K_1M_k \delta_m,\]
  using that $(S_t)$ is strongly continuous and that \autoref{a:noisefinite} holds true. The stochastic integrals can be bounded by~\cite[Lemma 2.4]{nakayamasupport} which says, applied to $t\mapsto \sigma_k(X(t))\in E_\alpha$,
  \begin{equation*}
    \E{\sup_{0\leq t\leq T}\norm{\int_{\tmm}^t S_{t-s} \sigma_k(X(s))\d \beta^k(s)}{\alpha}^{2p}} \leq K_{T, p,k} \delta_m^{p-1}.
  \end{equation*}
  Finally,
  \begin{gather}
    \begin{split}
      \EE\left[\sup_{0\leq t\leq T}\norm{X(t) - \bar X_m(t)}{}^{2p}\right] \leq K_{T,\alpha,p}\left( m^{-2p(1-\alpha)} +n m^{-2p}+ n m^{1-p}\right).\qedhere
    \end{split}
  \end{gather}  
\end{proof}
\begin{rmk}
  Here, however, we directly see how the modifications of the assumptions can be covered by the tools provided by interpolation theory in \autoref{sec:Pre}. Likewise, one could directly apply results on space-time regularity of $X(t) - S_tX_0$. For instance, by~\cite[Prop. 4.2]{weisEvEqBS} the stochastic convolution is H\"older continuous with exponent $< \frac12 - \frac{1}{2p}$ which yields an estimate of type $\delta_m^{\lambda}$ with $0 <\lambda < p-1$. 
\end{rmk}
\begin{rmk}
  The last two proofs particularly show why we have to consider first the limits for $m\to\infty$, and then $n\to \infty$.
\end{rmk}

We now collect the arguments to finish the proof of \autoref{thm:WZAfinite}.
\begin{proof}[Proof of \autoref{thm:WZAfinite}]
  Applying \autoref{lem:Eubu} and~\ref{lem:EXbX}, we get a constant $K$, depending on $n$, $p$, $T$, and $\alpha$, such that
  \begin{equation*}
    \EE{\sup_{0\leq t\leq T} \norm{ X(t) -  Z_m(t)}{\alpha}^{2p}}\leq K  \left( \EE{\sup_{0\leq t\leq T} \norm{\bar X_m(t) - \bar Z_m(t)}{\alpha}^{2p}} + m^{-2p\alpha } + m^{1-p}\right).
  \end{equation*}
  Inserting the mild integral formulae respectively for  $Z_m$ and $X$ yields the decomposition
  \begin{gather*}
    \begin{split}
    \bar X_m(t) - \bar Z_m(t) 
    &= \int_0^{\tmm} S_{t-s} \left(B(X(s)) - B(Z_m(s)\right) \d s \\
    &\quad + \sum_{k=1}^n\int_0^{\tmm} S_{t-s}  \sigma_k(X(s)) \d \beta^k(s)  - \int_0^{\tmm} S_{t-s} \sigma_k(Z_m(s)) \dot\beta^k_m(s) \d s\\
    &\quad \qquad\qquad+ \frac12 \int_0^{\tmm} S_{t-s} D\sigma_k(X(s)) \sigma_k(X(s)) \d s .    
    \end{split}
  \end{gather*}
  For the first term on the right hand side, note that Assumption~\ref{a:driftlip} implies
  \begin{multline}
    \E{\sup_{0\leq t\leq T} \norm{\int_0^{\tmm} S_{t-s}\left[B(Z_m(s)) - B(X(s))\right]\d s}{\alpha}^{2p}} \\
    \leq \tilde K_{T,\alpha,p} \int_0^T \E{\sup_{0\leq s\leq t}\norm{Z_m(s) - X(s)}{\alpha}^{2p}} \frac{\d t}{(T-t)^{\alpha}}. \label{eq:IntdiffDrift}
  \end{multline}
  The remaining summands are covered by \autoref{lem:IntdiffNoise}, so that 
  \begin{multline*}
    \E{\sup_{0\leq t\leq T} \norm{\bar X_m(t) - \bar Z_m(t)}{\alpha}^{2p}}\\
    \leq K_{T,p,\alpha} \int_0^T \E{\sup_{0\leq s\leq t}\norm{Z_m(s) - X(s)}{\alpha}^{2p}} \frac{\d t}{(T-t)^{\alpha}}\\
    + K_{T,p,\alpha} \int_0^T \E{\sup_{0\leq s\leq t}\norm{Z_m(s) - X(s)}{\alpha}^{2p}}\d t 
    + \epsilon_m,
  \end{multline*}
  where $(\epsilon_m)_{m\in\N}$ is a sequence with $\lim_{m\to\infty} \epsilon_m= 0$. Note that on any time interval $(0,t)$ it holds that
  \[ 1 = s^{\alpha-1}s^{1-\alpha} \leq t^{1-\alpha} s^{\alpha-1}, \;s\in (0,t)\]
  and thus
  \begin{multline*}
  \int_0^T \E{\sup_{0\leq s\leq t}\norm{Z_m(s) - X(s)}{\alpha}^{2p}}\d t \\ \leq K_{T,\alpha} \int_0^T \E{\sup_{0\leq s\leq t}\norm{Z_m(s) - X(s)}{\alpha}^{2p}}\frac{\d t}{(T-t)^{\alpha}} + K_{T,\alpha} \epsilon_m.    
  \end{multline*}
  To obtain the convergence, we use Gronwall's lemma in its extended version~\autoref{lem:gronwall}. 
\end{proof}

\subsection{Convergence of Local Solutions}
\label{ssec:localapprox}

%%%%%%%%%%%%%%%%%%% 
Before we go into more detail for the approximation of local solutions of stochastic evolution equations, we discuss preliminary results on explosion times for stochastic processes. We prepare this discussion by some natural thoughts about the deterministic situation.
\begin{lem}\label{lem:exitdet}
  Let $V$ be a real Banach space, $T>0$ and $f_n$, $f_\infty \in C([0,T];V)$, such that $f_n \to f_\infty$ uniformly, as $n\to \infty$. Fix $r>0$ and define
  \[ t_n := \inf\{t\geq 0\,\vert\, \norm{f_n(t)}{V} > r\},\quad s_n := \inf\{t\geq 0\,\vert\, \norm{f_n(t)}{V}\geq r\},\quad n\in \N\cup \{\infty\}.\]
  Here, we set $\inf \emptyset := T$. Then,
  \[\liminf_{n\to\infty} s_n\geq s_\infty,\quad  t_\infty \geq \limsup_{n\to \infty} t_n.\]
  Moreover, if $s_\infty = t_\infty$, then
  \[ \lim_{n\to\infty}s_n = \lim_{n\to \infty} t_n = t_\infty.\]
\end{lem}
\begin{rmk}
  Denote by $t_{n}^{(r)}$, $s_n^{(r)}$ the exit times of the balls of radius $r>0$, for $f_n$ as in the lemma. Then, \autoref{lem:exitdet} yields for all $\epsilon >0$
  \begin{equation}\label{eq:exitestdet}
    \limsup_{n\to\infty} s_n^{(r)}\leq  \limsup_{n\to \infty} t_n^{(r)} \leq t_\infty^{(r)} \leq s_\infty^{(r+\epsilon)} \leq \liminf_{n\to\infty} s_n^{(r+\epsilon)} \leq \liminf_{n\to\infty} t_n^{(r+\epsilon)}.    
  \end{equation}
\end{rmk}
\begin{proof}
  Without loss of generality assume $t_\infty < T$. Else, the estimate holds true trivially. Now, for all $\epsilon>0$ there exists $\delta >0$ such that $t_\infty+\epsilon \leq T$ and 
  \[\sup_{0\leq s \leq t_\infty+\epsilon} \norm{f_\infty(s)}{V} >r+\delta.\]
  Let $N\in \N$ such that for all $n\geq N$ ,
  \[\sup_{0\leq t\leq T} \norm{f_\infty(t) - f_n(t)}{V} <\delta.\]
  By triangle inequality, it holds that
  \[ \sup_{0\leq s\leq t_\infty+\epsilon}\norm{f_n(t)}{V} > r,\]
  and thus $t_n< t_\infty+\epsilon$. 
  
  The proof for $s_n$ in the opposite direction works quite similar. Without loss of generality we assume $s_\infty>0$. Then, for all $\epsilon >0$ there exists $\delta >0$ such that $s_\infty - \epsilon >0$ and 
  \[ \sup_{0< s<s_\infty-\epsilon} \norm{f_\infty(s)}{V} <r-\delta.\]
  Choosing $N\in \N$ as above, we get that for all $n\geq N$,
  \[\sup_{0<s<s_\infty-\epsilon} \norm{f_\infty(s)}{V} <r,\]
  and thus, $s_n>s_\infty -\epsilon$. Since $\epsilon>0$ was chosen arbitrarily, the first result follows. 
  
  To prove the last statement in the lemma we assume additionally $s_\infty = t_\infty$, and observe
  \begin{equation*}
    \liminf_{n\to \infty} t_n \geq \liminf_{n\to \infty} s_n \geq s_\infty =t_\infty\geq \limsup_{n\to\infty} t_n \geq \limsup_{n\to\infty} s_n.\qedhere
  \end{equation*}
\end{proof}
We also obtain the following convergence result.
\begin{lem}
  \label{lem:convdet}
  In the setting and notation of \autoref{lem:exitdet}, it holds for all $\epsilon >0$, $r>0$,
  \[ \lim_{n\to \infty} f_n(s_n^{(r+\epsilon)} \wedge .) =  f_\infty,\quad \text{uniformly on }[0,t_\infty^{(r)}].\]
\end{lem}
\begin{proof}
  Without loss of generality assume $t_\infty^{(r)}>0$, else, the claim holds true trivially. Now, let $N\in\N$ such that for all $n\geq N$,
  \[\sup_{0\leq t\leq T} \norm{f_n(t) - f_\infty(t)}V <\epsilon.\]
  Thus,
  \[ \sup_{0\leq t\leq t_{\infty}^{(r)}} \norm{f_n}V \leq r + \epsilon,\]
  which implies $t_\infty^{(r)} \leq s_n^{(r+\epsilon)}$, for all $n\geq N$. Finally,
  \[ \lim_{n\to\infty} \sup_{0 \leq t\leq t_\infty^{(r)}} \norm{f_n(t\wedge s_n^{(r+\epsilon)}) - f_\infty(t)}V \leq \lim_{n\to\infty} \sup_{0\leq t\leq T} \norm{f_n(t) - f_\infty(t)}V = 0. \qedhere\]
\end{proof}
We now consider the stochastic situation. Let $Y_n$, $n\in \bar \N:= \N\cup \{\infty\}$ be continuous stochastic processes on the Banach space $V$, with explosion times denoted by $\sigma_n$, $n\in \bar \N$. We do not require the processes to be adapted. For $r>0$ denote by $\varsigma^{(r)}_n$ and $\tau^{(r)}_n$, the first exit times of $Y_n$ of the respectively open and closed balls of radius $r$, for $n\in \bar\N$. More precisely
\begin{align*}
  \tau_n^{(r)} := \inf\{ t\geq 0 \,\vert \, \norm{Y_n(t)}{V} > r\},\quad   \varsigma_n^{(r)} := \inf\{ t\geq 0 \,\vert \, \norm{Y_n(t)}{V} \geq r\}.
\end{align*}
\begin{ass}\label{a:YnLoc}
  Assume that for all $n\in \bar\N$ there exist  stochastic processes $Y_n^{(r)}$, which are $\F_T$-measurable have almost surely paths in $C([0,T];V)$ and satisfy $Y_n^{(r)} = Y_n$ on $\llbrak 0, \tau^{(r)}_n\rrbrak$.
\end{ass}
\begin{ass}\label{a:YnConv}
  Assume that for all $r>0$ it holds that 
  \[ \lim_{n\to \infty} \sup_{0\leq t\leq T} \norm{Y_\infty^{(r)}(t) -Y_n^{(r)}(t)}{V} = 0,\quad\text{in probability.}\]
\end{ass}
\begin{rmk}\label{rmk:YnLoc}
  Note that $\varsigma_n^{(r)}$ are also the exit times of the processes $Y_n^{(r)}$. In particular, \autoref{a:YnLoc} yields that for all $n\in\bar\N$ and all $r'\geq r >0$,
  \[ Y_n^{(r)} = Y_n^{(r')},\quad \text{on } \llbrak 0,\tau_n^{(r)}\rrbrak.\]
\end{rmk}

The following result is proven also in~\cite[Thm 2.1]{kunzeApprox}, but was stated under the assumption that the stochastic processes are adapted. Note that this is not a restriction since we can pass over to the filtration generated by $Y_{n}$, $n\in \bar \N$, without any problems. However, we provide a path-wise and more direct proof which reduces the problem to the deterministic situation of the the previous lemmas. This will heavily rely on the subsequence criterion, which says that a sequence of random variables in a metric space converges in probability if and only if every subsequence has an almost surely convergent subsequence with the same limit, see e.\,g. \cite[Lemma 4.2]{kallenberg}.
\begin{prop}\label{prop:Stopping}
  Let Assumptions~\ref{a:YnLoc} and~\ref{a:YnConv} hold true. 
  \begin{enumerate}[label=(\roman*)]
  \item For all $r>0$, $\epsilon>0$, it holds almost surely that,
    \[ \liminf_{n\to\infty} \tau_n^{(r)} \leq \tau_\infty^{(r)} \leq \varsigma_\infty^{(r+\epsilon)} \leq \limsup_{n\to\infty} \varsigma_n^{(r+\epsilon)}.\]
    Moreover, for all $r>0$, $\epsilon_0>0$ and $(n_k)\subset \N$, with $n_k\to\infty$ as $k\to\infty$, there exists a subsequence $(n_{k_j})\subset (n_k)$ such that for all $\epsilon \in (0,\epsilon_0)$, almost surely,
    \begin{equation}
      \limsup_{j\to\infty} \tau_{n_{k_j}}^{(r)} \leq \tau_\infty^{(r)} \leq \varsigma_\infty^{(r+\epsilon)} \leq \liminf_{j\to\infty} \varsigma_{n_{k_j}}^{(r+\epsilon)}. \label{eq:taurapprox}
    \end{equation}
  \item Almost surely, 
    \[ \lim_{\Q\ni r\to\infty} \liminf_{n\to\infty} \tau_n^{(r)} \leq \sigma_\infty \leq \lim_{\Q\ni r\to\infty}\limsup_{n\to\infty} \varsigma_n^{(r)}.\]
    Moreover, for all $(n_k)\subset \N$, with $n_k\to\infty$ as $k\to\infty$, and $r>0$ denote by $\overline{\tau}^{(r)}$ and $\underline{\varsigma}^{(r)}$ respectively left and right hand side of~\eqref{eq:taurapprox}. Then, it holds with probability one,
    \begin{equation}\label{eq:tauinftyapprox} 
      \lim_{\Q\ni r\to\infty} \overline{\tau}^{(r)} = \lim_{\Q\ni r\to\infty}\underline{\varsigma}^{(r)} = \sigma_\infty.      
    \end{equation}
  \item For all $r>0$ and $\epsilon>0$ it holds that
    \[ \lim_{n\to \infty} Y_n(.\wedge \tau_\infty^{(r)}\wedge \varsigma_{n}^{(r+\epsilon)}) = Y_\infty(.\wedge \tau_\infty^{(r)}),\qquad \text{u.\,c.\,p.}\]
    Moreover, for all $r>0$
    \[ \lim_{n\to \infty} Y_n(.\wedge \tau_\infty^{(r)}) = Y_\infty(.\wedge \tau_\infty^{(r)}),\qquad \text{u.\,c.\,p.}\]   
  \end{enumerate}
\end{prop}
\begin{rmk}
  Part (iii) includes that $\tau_\infty^{(r)}< \sigma_n$, for large $n$ at least along subsequences, where the meaning of ``large'' will typically depend on $\omega$. 
\end{rmk}
\begin{proof}
  By u.\,c.\,p. convergence of $Y_n$ and subsequence criterion, each $(n_k)$ admits a subsequence $({n_{k_j}})$ such that almost surely,
  \[\lim_{j\to\infty}\sup_{0\leq t\leq T} \norm{ Y_\infty^{(r)} - Y_{n_{k_j}}^{(r)}}{V} = 0.\]
  Let $(n_{k_j})$ be a subsequence of $(n_k)$ and $\Omega' \subset \Omega$ be a set of full measure such that the convergence holds for $r'>r>0$ fixed, and such that the paths of all the processes considered in the following are continuous. We emphasize that these are at most countably many. Recall Remark~\ref{rmk:YnLoc}, which states that for all $\epsilon \in (0,r'-r]$, $\varsigma^{(r+\epsilon)}_n$ and $\tau_n^{(r)}$ are the exit times of $Y_n^{(r')}$. The second claim of (i) follows from \autoref{lem:exitdet} and~\eqref{eq:exitestdet}, applied to $Y_n^{(r')}$, $n\in \N$, for all $\omega \in \Omega'$. To verify the first claim,  recall that we have just shown that for $\delta>0$ and all $\omega \in \Omega'$, there exists a $J>0$ such that
  \[ \sup_{j\geq J} \tau_{n_{k_j}}^{(r)} < \tau_\infty^{(r)} + \delta.\]
  Hence, for all $N\in \N$
  \[\inf_{n\geq N} \tau_{n}^{(r)} \leq \inf_{n\geq (n_{k_J}\vee N)} \tau_{n}^{(r)} \leq \sup_{n\geq (n_{k_J}\vee N)} \tau_{n}^{(r)} \leq \sup_{j\geq J} \tau_{n_{k_j}}^{(r)} < \tau_\infty^{(r)} +\delta.\]
  We let $N\to \infty$ and since $\delta >0$ was chosen arbitrarily, the first claim in (i) holds on all of $\Omega'$. Note that the estimates for $\varsigma_{n}^{(r+\epsilon)}$ holds true due to the same arguments. \\
  
  We now directly apply \autoref{lem:convdet} to $Y^{(r')}_n(\omega,.)$, for all $\omega\in\Omega'$, and obtain
  \[ \lim_{j\to\infty}\sup_{0\leq t\leq T}\norm{Y_{n_{k_j}}^{(r')} (t\wedge \varsigma_{n_{k_j}}^{(r+\epsilon)}\wedge \tau_{\infty}^{(r)}) - Y_\infty^{(r')}(t\wedge \tau_{\infty}^{(r)})}V = 0.\]
  Recall that $Y_n^{(r')} = Y_n$ on $\llbrak 0,\varsigma^{(r+\epsilon)}_n\rrbrak$, for all $n\in \bar\N$, $\epsilon \in [0,r'-r]$ by assumption and monotonicity of the exit times. In fact, this finishes the proof of (iii). 

  Part (ii) is a direct application of the first. Let $\Omega'\subset \Omega$ be a set of full measure such that the first statement holds true for all $r\in (0,\infty)\cap \Q$. Hence, part (i) yields%App
  \begin{equation}
    \limsup_{n\to\infty} \varsigma_n^{(r)}(\omega) \geq \varsigma_\infty^{(r)}(\omega),\quad \forall r >0, r\in \Q, \label{eq:varsigmaninf}
  \end{equation}
  for all $\omega\in \Omega'$. Note that $\varsigma^{(r)}_n$ is increasing in $r>0$ and bounded by $T$ so that its limit exists. This yields the right hand side of the first claim and for the left hand side, we argue in the same way.
  
  For fixed $r>0$ denote by $\Omega_r\subset \Omega$ the set such that all the statements up to now hold true for fixed $r>0$ and set $\Omega' := \bigcap_{r>0,r\in\Q} \Omega_r$. Now, taking limits only along $\Q$,
  \[ \sigma_\infty = \lim_{r\to\infty} \tau_\infty^{(r)} \geq \limsup_{r\to\infty} \overline{\tau^{(r)}}  \geq   \liminf_{r\to\infty} \overline{\tau^{(r)}} \geq \sigma_\infty.\]
  For the latter estimate we just used that $\overline{\tau^{(r)}} \geq \tau^{(r-\epsilon)}_\infty$ for $\epsilon \in (0,r)$, \eqref{eq:taurapprox}. We can use the same argumentation for $\underline{\varsigma}^{(r)}$. 
\end{proof}
By iterative construction of subsequences, Kunze and van Neerven prove the following additional results, see~\cite[Thm 2.1.(3) and Cor 2.5]{kunzeApprox}.  As above, we can drop the adaptness assumption without any problems.
\begin{prop}\label{prop:Stopping2}
  In the situation of the previous proposition, for all $t>0$ it holds that
  \[ \lim_{n\to\infty} Y_{n}(t) \1_{\llbrak 0,\sigma_\infty\wedge \sigma_n\llbrak }(t;.) = Y_\infty(t) \1_{\llbrak 0,\sigma_\infty\llbrak}(t,.) \qquad\text{in probability.}\]
  If, in addition, $\sigma_n = T$ almost surely for all $n\in \bar\N$, then
  \[ \lim_{n\to \infty} \sup_{0\leq t\leq T} \norm{Y_\infty (t) -  Y_{n}(t)}{V}  = 0,\quad \text{in probability.}\]
\end{prop}
In the special situation where $\tau_\infty^{(r)} = \varsigma_\infty^{(r)} = 1$, we can get rid of the ``$\epsilon$'' in \autoref{prop:Stopping}, as a consequence of \autoref{lem:exitdet}.
\begin{prop}
  If in addition to \autoref{a:YnLoc} and~\ref{a:YnConv}, it holds that 
  \begin{equation}
    \label{eq:tausigma}
    \P{ \tau_\infty^{(r)} = \varsigma_\infty^{(r)}} = 1,    
  \end{equation}
  for some $r>0$, then, as limits in probability,
  \[\lim_{n\to\infty} \tau_n^{(r)} = \lim_{n\to\infty} \varsigma_n^{(r)} = \tau_\infty^{(r)}.\]
  In particular, uniformly on compacts in probability,
  \[\lim_{n\to\infty} Y_n(.\wedge \tau_n^{(r)}) = \lim_{n\to\infty} Y_n(.\wedge \varsigma_n^{(r)}) = Y_\infty(.\wedge \tau^{(r)}_\infty).\]
\end{prop}
\begin{rmk}
  Condition~\eqref{eq:tausigma} is essential since one can construct counterexamples in the deterministic case even when $V= \R$. 

  On the other hand, it is well-known that~\eqref{eq:tausigma} holds true for all $r>0$ when $Y_\infty$ is a real-valued Brownian motion. However, the situation becomes more delicate e.\,g. when $V$ is an infinite dimensional Hilbert space and $Y_\infty$ is the mild solution of a stochastic evolution equation which is not a strong solution on $V$.  
\end{rmk}
\begin{proof}
  As in the proof of \autoref{prop:Stopping}, fix $r' >r>0$ and let $(n_k)\subset \N$ such that
  \[ \lim_{k\to\infty}\sup_{0\leq t\leq T}\norm{Y^{(r)}_{n_k}(t)- Y_\infty^{(r)}(t)}{}+\sup_{0\leq t\leq T}\norm{Y^{(
        r')}_{n_k}(t)- Y_\infty^{(r')}(t)}{} = 0,\]
  almost surely. Let $\Omega'\subset \Omega$ be a set of full measure such that $\tau_\infty^{(r)} = \varsigma_\infty^{(r)}$ and the convergence and continuity of the involved processes hold true on all of $\Omega'$. Again, since for each $\epsilon <r'-r$, $\tau^{(r+\epsilon)}_{n}$ and $\varsigma^{(r+\epsilon)}_{n}$ are the exit times of $Y_{n}$, $n\in \bar\N$, we can now apply \autoref{lem:exitdet} to $Y^{(r')}$ to get the convergence for the exit times.
  
  Moreover, recall that $Y_n(t\wedge \tau_n^{(r)}) = Y_n^{(r)}(t\wedge \tau_n^{(r)})$ so that
  \begin{align*}
    \label{eq:49}
    \sup_{0\leq t\leq T}\norm{Y_\infty(t\wedge \tau_\infty^{(r)}) - Y_n(t\wedge \tau_n^{(r)})}{} &\leq \sup_{0\leq t\leq T}\norm{Y_\infty^{(r)}(t) - Y_n^{(r)}(t)}{} \\
                                                                                                 &\quad + \sup_{0\leq t\leq T}\norm{Y_\infty^{(r)} (t\wedge\tau_\infty^{(r)}) - Y_\infty^{(r)}(t\wedge \tau_n^{(r)})}{}.
  \end{align*}
  The first term vanishes as $n\to \infty$, almost surely, by \autoref{a:YnConv}. Switching to subsequences the latter one also vanishes almost surely, since $Y^{(r)}_\infty$ is uniformly continuous on the compact set $[0,T]$. We can apply the whole procedure to an arbitrary subsequence and thus, by subsequence criterion we obtain the convergence in probability.
\end{proof}
%%%%%%%%%%%%%%%%%%

\subsection{Localized Wong-Zakai Approximation}
\label{ssec:wzalocal}
We are now prepared to extend the Wong-Zakai approximation to the case where the solution of~\eqref{eq:eeq} might explode in finite time. For the localization, we will consider smooth truncation functions $h_r$, $r>0$. More precisely, we assume that $h_r\in C^\infty(\R_{\geq 0})$ is non-increasing, constant to $1$ on $[0,r^2]$ and constant to $0$ on $[(r+1)^2,\infty)$. Moreover, we assume for the first two derivatives $h'_r$ and $h''_r$ that
\begin{equation}
  \label{eq:32}
  \sup_{r>0} \left(\norm{ h_r'}{\infty} +  \norm{h_r''}{\infty}\right) \leq c <\infty.
\end{equation}

\begin{lem}\label{lem:truncfct}
  The map $\Xi_r: u\mapsto h_r(\norm{u}{\alpha}^2)$ is of class $C^2_b$ from $E_\alpha$ into $\R$. In addition, for $i\in \{1,2\}$,
  \[\supp (D^{i}\Xi_r) \subset \{u\in E_\alpha\vert  r\leq \norm{u}{\alpha} \leq r+1\}.\]
  In particular, $\Xi_r$ has global Lipschitz constant smaller than $2c(r+1)$.
\end{lem}
Here, we denote by $D^{i}$ the $i$th Fr\'echet derivative and will
write $D = D^1$ in the following. 
\begin{proof}
  Recall that $E_\alpha$ is a real Hilbert space, hence
  \[ D(\norm{.}{\alpha}^2)(u)v = 2\scal{u}{v}{\alpha}\]
  Indeed, for $u$, $v\in E_\alpha$ and $\epsilon>0$,
  \begin{equation*}
    \frac1{\epsilon} \left[\scal{u+\epsilon v}{u+\epsilon v}{\alpha}  - \scal{u}{u}{\alpha}\right]
    = 
    \scal{v}{u}{\alpha} + \scal{u}{v}{\alpha} + \epsilon\scal{v}{v}{\alpha} \rightarrow 2\scal{u}{v}{\alpha}.
  \end{equation*}
  Moreover, by bi-linearity of the scalar product,
  \[ D^2 (\norm{.}{\alpha}^2) (u)[ v,w] = 2\scal{v}{w}{\alpha},\quad u,\,v,\,w\in E_\alpha\]
  Using chain rule, this yields
  \begin{gather*}
    D\Xi_r(u)v = 2h_r'(\norm{u}{\alpha}^2)\scal{v}{u}{\alpha},\\
    D^2\Xi_r(u)[v,w] = 2 h_r'(\norm{u}{\alpha}^2) \scal{w}{v}{\alpha} + 4h_r''(\norm{u}{\alpha}^2)\scal{v}{u}{\alpha}\scal{w}{u}{\alpha}.
  \end{gather*}
 The statement on the support follows directly from definition of $h_r$. Mean-value theorem for Fr\'echet derivatives yields 
  \[ \norm{\Xi_r}{Lip} \leq \sup_{u\in E_\alpha} \norm{D\Xi_r(u)}{\Lbd(E_\alpha;\R)}\leq 2c(r+1).\qedhere \]
\end{proof}

\begin{lem}\label{lem:truncLip}
  Let $E$ and $V$ be Banach spaces, $\Phi : V\to E$ Lipschitz continuous on bounded sets, and $h:V\to\R$ Lipschitz continuous with Lipschitz constant $L_h$ and 
  \[\supp(h) \subset \{ u\in V\,\vert\, \norm{u}{V}\leq r\} =: \bar K_r,\]
  for some $r\in\N$. Denote by $L_r$ the Lipschitz constant of $\Phi$ on $\bar K_r$. The map $u\mapsto h(u)\Phi(u)$ is globally bounded and Lipschitz continuous from $V$ into $E$ with Lipschitz constant
  \begin{equation}
    \label{eq:truncLipNorm}
     \norm{h\Phi}{Lip(V;E)} \leq  L_r \sup_{w\in V} \abs{h(v)}  + L_h \sup_{w\in \bar K_r} \norm{\Phi(w)}{V}.    
  \end{equation}
\end{lem}
\begin{proof}
  Let $u$, $v\in V$. W.\,l.\,o.\,g. assume that $\norm{v}{V}\leq \norm{u}{V}$ and write
  \[ h(u)\Phi(u) - h(u)\Phi(v) = h(u) (\Phi(u) - \Phi(v)) + \Phi(v) (h(u)- h(v)).\]
  The first term vanishes when $\norm{u}{V}\geq r$. Else, we know that $\norm{v}{V}\leq \norm{u}{V}< r$ by assumption, so that
  \begin{equation}
    \norm{h(u) (\Phi(u) - \Phi(v))}{E} \leq \sup_{w\in \bar K_r} \abs{h(w)} L_r \norm{u -v}{V}.
  \end{equation}
  The second summand vanishes when $\norm{v}{V}\geq r$. Else, Lipschitz continuity of $h$ yields
  \[ \norm{\Phi(v)}{E}\abs{h(u)-h(v)} \leq \norm{h}{Lip(V;\R)} \sup_{w\in \bar K_r} \norm{\Phi(w)}{E} \norm{u-v}{V}.\qedhere\]
\end{proof}

Let us now make the assumptions we will impose on the coefficients more precise. Recall that we have $\alpha \in [0,1)$, fixed and so, $E_\alpha$ will take the part of $V$ in the previous lemma.
\begin{ass}%
  \label{a:Ballloc}
  $B:E_\alpha\to E_0$ is Lipschitz continuous on bounded sets.
\end{ass}

\begin{ass}%
  $\;$ \label{a:Cloc}
  \begin{enumerate}[label=($C$.\roman*)]
  \item $C: E_\alpha\to \HS(U;E_\alpha)$ is Lipschitz continuous on bounded sets
  \item $\sigma_k:E_\alpha\to E_\alpha$, $k\in \N$ are twice Fr\'echet
    differentiable. $\sigma_k$ and its derivatives map bounded sets
    into bounded sets, for each $k\in \N$.
  \item For all $N\in \N$ there exists an $L_\Sigma^{(N)} >0$ such that for all $u,v\in E_\alpha$ with $\norm{u}{\alpha}$, $\norm{v}{\alpha} \leq N$,
    \[ \norm{\Sigma_n(u) - \Sigma_n(v)}{E_\alpha} \leq L_\Sigma\norm{u-v}{E_\alpha} ,\;\forall\, u,v\in E_\alpha,\,\forall n\in\N\].
  \item There exists $\Sigma_\infty:E_\alpha\rightarrow E_\alpha$ such that for all $u\in E_\alpha$,
\[ \lim_{n\to\infty} \Sigma_n (u) =\Sigma_\infty(u).\]
  \end{enumerate}
\end{ass}

%%%%%%%%%%%%%%%%%%%%%%%%%%%%%%%%%%%%%%%%%%%%%%% 

The truncated coefficients are then defined as
\[ B^{(r)}(u) := h_r(\norm{u}{\alpha}^2) B(u),\qquad C^{(r)}(u):= h_r(\norm{u}{\alpha}^2) C(u),\quad u\in E_\alpha.\]
We now use the notation introduced in the beginning of Section~\ref{sec:wza}, in particular, 
\[ \sigma^{(r)}_k (u) := C^{(r)}(u)e_k,\qquad \Sigma^{(r)}_n (u) := \frac12 \sum_{k=1}^n D\sigma_k^{(r)}(u) \sigma_k^{(r)}(u).\]
for $u\in E_\alpha$. Indeed, $B^{(r)}$ and $C^{(r)}$ do fulfill the assumptions we have imposed formerly.
\begin{lem}\label{lem:trunc}
  For all $r>0$, $B^{(r)}$ and $C^{(r)}$ fulfill \autoref{a:Ball} and~\ref{a:Call}, respectively. 
\end{lem}
\begin{proof}
  First, let us emphasize that \autoref{lem:truncLip} yields global  boundedness and Lipschitz continuity of $B^{(r)}$ and $C^{(r)}$. The Fr\'echet differentiability of $\sigma_k^{(r)}$ and its derivative is inherited from differentiability of $\sigma_k$, $D\sigma_k$ and by application of \autoref{lem:truncfct}. This also yields boundedness of $\sigma_k$ and its first two derivatives. 
  
  It remains to verify the properties of $\Sigma_n^{(r)}$. For all $u\in E_\alpha$,
  \begin{gather}
    \label{eq:SigmanN}
    \begin{split}
    \Sigma_n^{(r)}(u) &= \sum_{k=1}^n(D\Xi_r(u)\sigma_k^{(r)}(u)) \sigma_k(u) + \sum_{k=1}^n \Xi_r(u) D\sigma_k(u)\sigma_k^{(r)}(u) \\
    &= 2 h_r(\norm{u}{\alpha}^2) h_r' (\norm{u}{\alpha}^2)  \sum_{k=1}^n \scal{\sigma_k(u)}{u}{\alpha} \sigma_k(u) + h_r(\norm{u}{\alpha}^2)^2  \Sigma_n(u).      
    \end{split}
  \end{gather}
  To see that the first summation in the last equation converges as $n\to \infty$, we apply triangle and Cauchy-Schwarz inequality,
  \begin{equation}
    \label{eq:33}
    \norm{ \sum_{k=1}^n \scal{\sigma_k(u)}{u}{\alpha} \sigma_k(u) }{\alpha} \leq  \norm{u}{\alpha} \sum_{k=1}^n\norm{\sigma_k(u)}{\alpha}^2.
  \end{equation}
  In fact, since $C(u)\in \HS(U;E_\alpha)$, the sequence $(\sigma_k(u))_k$ is square summable in $E_\alpha$, and we can define 
  \begin{equation}
    \label{eq:19}
    \Sigma_\infty^{(r)}(u) := 2 h_r(\norm{u}{\alpha}^2) h_r' (\norm{u}{\alpha}^2)  \sum_{k=1}^\infty \scal{\sigma_k(u)}{u}{\alpha} \sigma_k(u) + h_r(\norm{u}{\alpha}^2)^2  \Sigma_\infty(u).    
  \end{equation}
  We obtain $\Sigma_n^{(r)}\to \Sigma_\infty^{(r)}$ strongly in $E_\alpha$ by assumptions on $\Sigma_n$ and the same estimates as in~\eqref{eq:33}, more detailed
  \begin{multline}
    \label{eq:30}
    \norm{\Sigma_\infty^{(r)}(u) - \Sigma_n^{(r)}(u)}{\alpha} \leq \norm{\Sigma_\infty - \Sigma_n(u)}{\alpha} +\sum_{k=n+1}^\infty \abs{\scal{\sigma_k(u)}{u}{\alpha}} \norm{\sigma_k(u)}{\alpha} \\
    \leq  \norm{\Sigma_\infty(u) - \Sigma_n(u)}{\alpha} + \norm{u}{\alpha} \sum_{k=n+1}^\infty  \norm{\sigma_k(u)}{\alpha}^2 \longrightarrow 0. 
  \end{multline}
  To prove Lipschitz continuity of $\Sigma_n^{(r)}$ we go back into~\eqref{eq:SigmanN} and first apply \autoref{lem:truncLip} to the last summand. For the remaining part, we decompose for $u$, $v\in E_\alpha$, and $n\in\N$, 
  \begin{align*}
    \scal{\sigma_k(u)}{u}{\alpha} \sigma_k(u) - \scal{\sigma_k(v)}{v}{\alpha}\sigma_k(v) 
    &= \scal{\sigma_k(u)}{u-v}{\alpha} \sigma_k(u)\\
    &\quad + \scal{\sigma_k(u)-\sigma_k(v)}{v}{\alpha} \sigma_k(u) \\
    &\quad + \scal{\sigma_k(v)}{v}{\alpha} \left(\sigma_k(u)-\sigma_k(v)\right)\\
    &=: R_1^k + R_2^k + R_3^k.
  \end{align*}
  With the same estimates as for~\eqref{eq:33} we get
  \[\sum_{k=1}^n \norm{R_1^k}{\alpha} \leq  \norm{u-v}{\alpha} \norm{C(u)}{\HS(U;E_\alpha)}^2,\]
  which is independent of $n\in\N$. For the remaining part, we apply Cauchy-Schwarz inequality on $E_\alpha$ and on $\R^n$, to get
  \begin{gather*}
    \begin{split}
    \sum_{k=1}^n \norm{R_2^k}{\alpha} + \norm{R_3^k}{\alpha}
    &\leq\norm{v}{\alpha}\sum_{k=1}^n \norm{\sigma_k(u)-\sigma_k(v)}{\alpha} \left(\norm{\sigma_k(u)}{\alpha}+\norm{\sigma_k(v)}{\alpha}\right)\\
    &\leq  \norm{v}{\alpha} \sqrt{\sum_{k=1}^n \norm{\sigma_k(u)-\sigma_k(v)}{\alpha}^2} \left(\sqrt{\sum_{k=1}^n  \norm{\sigma_k(u)}{\alpha}^2}  + \sqrt{\sum_{k=1}^n  \norm{\sigma_k(v)}{\alpha}^2}\right)\\
    &\leq \norm{v}{\alpha} \left(\norm{C(u)}{\HS(U; E_\alpha)} + \norm{C(v)}{\HS(U;E_\alpha)}\right) \norm{C(u)-C(v)}{\HS(U;E_\alpha)}.    
    \end{split}
  \end{gather*}
  In other words, we have Lipschitz continuity on bounded sets, with local Lipschitz constants independent of $n\in \N$. By \autoref{lem:truncLip} and~\eqref{eq:SigmanN}, this yields that $\Sigma_n^{(r)}$, $n\in\N$, are globally Lipschitz with uniform Lipschitz constant. 
\end{proof}

We denote by $X^{(r)}$ the unique global mild solution on $E_\alpha$, of the truncated equation,
\begin{equation}
  \label{eq:eeqloc}
  \d X^{(r)}(t) = \left[ A X^{(r)}(t) + B^{(r)}(X^{(r)}(t))\right] \d t + C^{(r)}(X^{(r)}(t)) \d W_t,
\end{equation}
 with intial condition $X^{(r)}(0) = X_0$. Respectively, $X_n^{(r)}$ and $Z_{m,n}^{(r)}$, for $n$, $m\in \N$, $r>0$, denote the solutions of the localized approximating equations
\begin{equation} \label{eq:eeqr}
  \left\{ \begin{aligned}
      \d X^{(r)}_n(t) &= \left[A X^{(r)}_n(t) + B_n^{(r)}(X^{(r)}(t)) \right] \d t+ \sum_{k=1}^n \sigma^{(r)}_k(X^{(r)}_n(t)) \d \beta^k(t),\\
      X^{(r)}_n(0) &= X_0,
    \end{aligned}\right.
\end{equation}
and $\omega$-wise,
\begin{equation} \label{eq:eeqWZA}
  \left\{ \begin{aligned}
      \ddt {Z}^{(r)}_{m,n}(t) &= A Z^{(r)}_{m,n}(t) + B^{(r)}(Z^{(r)}_{m,n}(t)) - \Sigma^{(r)}_\infty(Z^{(r)}_{m,n}(t)) \\
      &\qquad\qquad\qquad\qquad\qquad\qquad+ \sum_{k=1}^n \sigma^{(r)}_k(Z^{(r)}_{m,n}(t)) \dot{\beta}^k_m(t),\;t> 0,\\
      Z^{(r)}_{m,n}(0) &= X_0.
    \end{aligned}\right.
\end{equation}
Here $B_n^{(r)}:= B^{(r)}- \Sigma_n^{(r)} + \Sigma_\infty^{(r)}$. The latter one has been defined in~\eqref{eq:19}.
%%%%%%%%%%%%%%%%%%%%%%%%%%%%%%%%%%%%%%%%%%%%%%% 

With $X$, $X_n$ and $Z_{m,n}$, for $m$, $n\in\N$ we denote the unique maximal solutions of the non-truncated equations \eqref{eq:eeq}, \eqref{eq:SEEnoiseapprox} and \eqref{eq:eeqWZAbd}, respectively. Moreover, by $\tau$, $\tau_n$ and $\tau_{m,n}$ we denote their $E_\alpha$-explosion times, and for $r>0$,
\begin{align*}
  \allowdisplaybreaks
  \tau^{(r)} &:=  \inf\{ t\geq 0\,\vert\,  t<\tau,\,\norm{X (t)}{\alpha} > r \} ,\\
  \tau_{n}^{(r)}&:= \inf\{ t\geq 0\,\vert\,  t<\tau_{n},\,\norm{X_n(t)}{\alpha} > r \},\\
  \tau_{m,n}^{(r)}&:= \inf\{ t\geq 0\,\vert\,  t<\tau_{m,n},\,\norm{Z_{m,n(t)}}{\alpha} > r \}.
\end{align*}

In the same way we define the exit times of the open balls
\begin{align*}
  \allowdisplaybreaks
  \varsigma^{(r)} &:= \inf\{ t\geq 0\,\vert\, \norm{X^{(r)}(t)}{\alpha} \geq r \} = \inf\{ t\geq 0\,\vert\, t<\tau,\, \norm{X (t)}{\alpha} \geq r \}, \\
  \varsigma_{n}^{(r)}&:= \inf\{ t\geq 0\,\vert\, \norm{X_{n}^{(r)}(t)}{\alpha} \geq r \} = \inf\{ t\geq 0\,\vert\,  t<\tau_{n},\,\norm{X_{n}(t)}{\alpha} \geq r \},\\
  \varsigma_{m,n}^{(r)}&:= \inf\{ t\geq 0\,\vert\, \norm{Z_{m,n}^{(r)}(t)}{\alpha} \geq r \} = \inf\{ t\geq 0\,\vert\,  t<\tau_{m,n},\,\norm{Z_{m,n}(t)}{\alpha} \geq r \}.
\end{align*}
\begin{rmk}
  $Z_{m,n}^{(r)}$ and $\tau_{m,n}^{(r)}$ can be seen as $\F_T$-measurable functions of $\omega\in \Omega$, but $Z_{m,n}^{(r)}$ is \emph{not} adapted and hence, $\tau_{m,n}^{(r)}$ is not a stopping time. 
\end{rmk}

The following result extends \cite[Thm 2.1 and Prop 7.3]{nakayamasupport} and seems to be new even when $\alpha = 0$.
\begin{thm}
  \label{thm:WZAlocal}
  Let Assumptions~\ref{a:A}, \ref{a:Ballloc} and \ref{a:Cloc} hold true. For all $r>0$, $\epsilon>0$, it holds that
  \[ \lim_{n\to\infty}\lim_{m\to\infty} \E{\sup_{0\leq t\leq T}\norm{X(.\wedge \tau^{(r)})-Z_{m,n}(.\wedge \varsigma_{m,n}^{(r+\epsilon)}\wedge \tau^{(r)}) }{\alpha}^{2p}} = 0\]
\end{thm}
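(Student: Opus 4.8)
The plan is to reduce the statement to the globally Lipschitz Wong--Zakai theorem \autoref{thm:WZA} applied to the \emph{truncated} coefficients, and to absorb the mismatch between $\tau^{(r)}$ and $\varsigma_{m,n}^{(r+\epsilon)}$ into an event of vanishing probability. Fix $r>0$, $\epsilon>0$ and put $R:=r+\epsilon$. We may assume $\norm{X_0}{\alpha}\leq r$, since otherwise $\tau^{(r)}=0$, both stopped processes are constantly $X_0$, and the quantity in question vanishes. By \autoref{lem:trunc} the truncated coefficients $B^{(R)}$, $C^{(R)}$ satisfy \autoref{a:Ball} and \autoref{a:Call}, so \eqref{eq:eeqloc} and \eqref{eq:eeqWZA} at level $R$ possess unique global mild solutions $X^{(R)}$ and $Z^{(R)}_{m,n}$ on $E_\alpha$, the estimate \eqref{eq:EsupXt} is available for $X^{(R)}$ at every exponent, and \autoref{thm:WZA} yields
\[ \lim_{n\to\infty}\lim_{m\to\infty}\E{\sup_{0\leq t\leq T}\norm{X^{(R)}(t)-Z^{(R)}_{m,n}(t)}{\alpha}^{2p}}=0 . \]

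Since $h_R\equiv1$ on $[0,R^2]$, the truncation is inactive whenever the $E_\alpha$-norm stays $\leq R$; as in the identities defining $\varsigma^{(r)}$ and $\varsigma_{m,n}^{(r)}$ above, uniqueness of mild solutions then gives $X=X^{(R)}$ on $\llbrak 0,\tau^{(r)}\rrbrak$ (there $\norm{X(t)}{\alpha}\leq r<R$) and $Z_{m,n}=Z^{(R)}_{m,n}$ on $\llbrak 0,\varsigma^{(R)}_{m,n}\rrbrak$, where $\varsigma^{(R)}_{m,n}:=\inf\{t\geq0:\norm{Z^{(R)}_{m,n}(t)}{\alpha}\geq R\}=\varsigma_{m,n}^{(r+\epsilon)}$; in particular $\norm{X^{(R)}(t)}{\alpha}\leq r$ for $t\leq\tau^{(r)}$. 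Writing $D_{m,n}:=\sup_{0\leq t\leq T}\norm{X^{(R)}(t)-Z^{(R)}_{m,n}(t)}{\alpha}$ and $\rho_{m,n}:=\varsigma^{(R)}_{m,n}\wedge\tau^{(r)}\leq\tau^{(r)}$, the expression under the limits in the theorem equals $\E{\sup_{0\leq t\leq T}\norm{X^{(R)}(t\wedge\tau^{(r)})-Z^{(R)}_{m,n}(t\wedge\rho_{m,n})}{\alpha}^{2p}}$ almost surely.

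The core step is a dichotomy governed by $D_{m,n}$ and the gap $R-r=\epsilon$. Splitting off $\norm{X^{(R)}(t\wedge\tau^{(r)})-X^{(R)}(t\wedge\rho_{m,n})}{\alpha}$ leaves a contribution $\leq D_{m,n}$; and on $\{D_{m,n}<\epsilon\}$ one has $\norm{Z^{(R)}_{m,n}(t)}{\alpha}\leq\norm{X^{(R)}(t)}{\alpha}+D_{m,n}<r+\epsilon=R$ for all $t\leq\tau^{(r)}$, so $\varsigma^{(R)}_{m,n}>\tau^{(r)}$, hence $\rho_{m,n}=\tau^{(r)}$ and the split-off term vanishes identically, whereas on the complement it is crudely $\leq2\sup_{0\leq s\leq T}\norm{X^{(R)}(s)}{\alpha}$. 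Hence
\[ \sup_{0\leq t\leq T}\norm{X^{(R)}(t\wedge\tau^{(r)})-Z^{(R)}_{m,n}(t\wedge\rho_{m,n})}{\alpha}\leq D_{m,n}+2\Big(\sup_{0\leq s\leq T}\norm{X^{(R)}(s)}{\alpha}\Big)\1_{\{D_{m,n}\geq\epsilon\}} . \]
Raising to the power $2p$, taking expectations, and applying $(a+b)^{2p}\leq2^{2p-1}(a^{2p}+b^{2p})$, the Cauchy--Schwarz inequality, Markov's inequality $\P{D_{m,n}\geq\epsilon}\leq\epsilon^{-2p}\E{D_{m,n}^{2p}}$ and \eqref{eq:EsupXt} for $K:=\E{(\sup_{0\leq s\leq T}\norm{X^{(R)}(s)}{\alpha})^{4p}}<\infty$ (which is independent of $m,n$), one obtains
\[ \E{\sup_{0\leq t\leq T}\norm{X(t\wedge\tau^{(r)})-Z_{m,n}(t\wedge\varsigma_{m,n}^{(r+\epsilon)}\wedge\tau^{(r)})}{\alpha}^{2p}}\leq2^{2p-1}\E{D_{m,n}^{2p}}+2^{4p-1}K^{1/2}\epsilon^{-p}\left(\E{D_{m,n}^{2p}}\right)^{1/2} . \]
By the first paragraph the right-hand side tends to $0$ when first $m\to\infty$ and then $n\to\infty$, which proves the theorem.

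The main obstacle --- and the only genuinely delicate point --- is the dichotomy of the third paragraph: the possibly exploding, non-adapted approximant $Z_{m,n}$ cannot reach the enlarged radius $R=r+\epsilon$ before $X$ has exited the ball of radius $r$ unless $D_{m,n}\geq\epsilon$; this is precisely the reason the statement carries the radius $r+\epsilon$ in the stopping time $\varsigma_{m,n}^{(r+\epsilon)}$. Everything else is routine bookkeeping with \autoref{lem:trunc} and \eqref{eq:EsupXt}.
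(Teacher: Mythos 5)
Your proof is correct and takes a genuinely different route from the paper's. The paper truncates at some level $r'>r$, decomposes via the intermediate finite-noise solutions $X_n$, and then invokes the abstract exit-time machinery of Subsection~\ref{ssec:localapprox} (\autoref{prop:Stopping} together with \autoref{lem:convdet}): along almost surely convergent subsequences the exit times are compared path-wise, $L^{2p}$-convergence is recovered by dominated convergence, and the final conclusion follows from the subsequence criterion for convergence in a metric space. Your argument instead truncates directly at level $R=r+\epsilon$, applies \autoref{thm:WZA} to obtain $L^{2p}$-convergence of the \emph{unstopped} truncated processes, and absorbs the discrepancy between the stopping times $\tau^{(r)}$ and $\varsigma^{(r+\epsilon)}_{m,n}$ into the event $\{D_{m,n}\geq\epsilon\}$, whose probability is controlled by Markov's inequality and then combined with a Cauchy--Schwarz estimate using the uniform $4p$-moment bound \eqref{eq:EsupXt} for $X^{(R)}$. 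This bypasses the path-wise subsequence analysis entirely and yields an explicit quantitative bound in terms of $\E{D_{m,n}^{2p}}$; the price is only the appearance of the constant $\epsilon^{-p}$, which is harmless since $\epsilon>0$ is fixed. The two small facts you use implicitly — $\norm{X^{(R)}(t)}{\alpha}\leq r$ for $t\leq\tau^{(r)}$ (via path continuity of the global truncated solution and $\tau^{(r)}\leq\tau^{(R)}$), and the identifications $X=X^{(R)}$ on $\llbrak 0,\tau^{(r)}\rrbrak$, $Z_{m,n}=Z^{(R)}_{m,n}$ on $\llbrak 0,\varsigma^{(R)}_{m,n}\rrbrak$ — are exactly the uniqueness-based identities the paper itself records before invoking \autoref{prop:Stopping}, so no gap remains.
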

\begin{proof}
  Thanks to \autoref{lem:trunc} we can apply \autoref{thm:WZA} and observe that the unique solution $X^{(r)}$ of the localized equation can be approximated first by the solutions of equations with finite dimensional noise $X^{(r)}_n$ and then by Wong-Zakai approximations $Z_{m,n}^{(r)}$. By uniqueness claims of the existence results, we get
  \[ X^{(r)} = X^{(r')},\; \text{on }\llbrak 0,\tau^{(r)}\rrbrak,\quad X_n^{(r)} = X_n^{(r')},\; \text{on }\llbrak 0,\tau_n^{(r)}\rrbrak, \quad Z_{m,n}^{(r)} = Z_{m,n}^{(r')},\; \text{on }\llbrak 0,\tau_{m,n}^{(r)}\rrbrak,\]
  for $r'>r$, Now, the assumptions of \autoref{prop:Stopping} are fulfilled respectively for $Z_{m,n}$ and $X_n$, and for $X_n$ and $X$. For $\epsilon'\in (0,\epsilon)$, triangle inequality yields
  \begin{multline}
    \label{eq:27}
    \sup_{0\leq t\leq T}\norm{X(.\wedge \tau^{(r)})-Z_{m,n}(.\wedge \varsigma_{m,n}^{(r+\epsilon)}\wedge \tau^{(r)}) }{\alpha} \leq \\
    \leq \sup_{0\leq t\leq T}\norm{X(.\wedge \tau^{(r)})-X_{n}(.\wedge \tau_{n}^{(r+\epsilon')}\wedge \tau^{(r)}) }{\alpha}  + \\
    +\sup_{0\leq t\leq T}\norm{X_n(.\wedge \tau^{(r)}\wedge \tau^{(r+\epsilon')}_n)-Z_{m,n}(.\wedge \varsigma_{m,n}^{(r+\epsilon)}\wedge \tau^{(r)}) }{\alpha}.
  \end{multline}
  For every subsequence in $n$ there exists a further subsubsequence such that the first summand on the right hand side vanishes as $n\to\infty$ by \autoref{prop:Stopping}. Applying Lebesgue's Dominated Convergence Theorem, the convergence holds true also in $L^{2p}$, along the subsubsequences. 
  
  For the second term, we take converging subsequences $(X_{n_k})$ and $(u_{m^k_l, n_k})$ such that on a set of full measure $\Omega'\subset \Omega$, uniformly in $[0,T]$,
  \begin{alignat*}{2}
    X_{n_k}^{(r)}&\to X^{(r)},\qquad &X_{n_k}^{(r')}&\to X^{(r)'},\quad \text{ as } k\to\infty\\
    u_{m^k_l,n_k}^{(r)}&\to X_{n_k}^{(r)}, & u_{m_l^k, n_k}^{(r')} &\to X_{n_k}^{(r')}, \quad \text{ as } l\to\infty.
  \end{alignat*}
  Applying the proof of \autoref{lem:convdet} to $X_{n_k}^{(r')}$, there exists $k_0\in \N$ such that for all $k>k_0$ it holds that
  \begin{equation}
    \label{eq:44}
    \tau_{n_k}^{(r+\epsilon')} \geq \tau^{(r)}.    
  \end{equation}
  Therefore, the second term on the right hand side of~\eqref{eq:27} can be estimated by  
  \[\sup_{0\leq t\leq T}\norm{X_{n_k}(.\wedge \tau^{(r+\epsilon')}_{n_k})-u_{m_l^k,n_k}(.\wedge \varsigma_{m_l^k,n_k}^{(r+\epsilon)}\wedge \tau_{n_k}^{(r+\epsilon')}) }{\alpha},\qquad \forall\,k>k_0.\]
  This term goes to $0$ as $l\to\infty$, and dominated convergence yields $L^{2p}$-convergence. Let us be more precise, 
  \begin{multline*}
    \EE \sup_{0\leq t\leq T}\norm{X_{n_k}(.\wedge \tau^{(r+\epsilon')}_{n_k} \wedge \tau^{(r)} )-u_{m_l^k,n_k}(.\wedge \varsigma_{m_l^k,n_k}^{(r+\epsilon)}\wedge \tau_{n_k}^{(r)} \wedge \tau^{(r)}) }{\alpha}^{2p} \leq  \\
    \leq \E{\sup_{0\leq t\leq T}\norm{X_{n_k}(.\wedge \tau^{(r+\epsilon')}_{n_k})-u_{m_l^k,n_k}(.\wedge \varsigma_{m_l^k,n_k}^{(r+\epsilon)}\wedge \tau_{n_k}^{(r+ \epsilon')}) }{\alpha}^{2p}\1_{\tau_{n_k}^{(r+\epsilon)} \geq \tau^{(r)}}}\\
    + K_{\alpha,p} (2r + \epsilon' + \epsilon)^{2p}\, \P{\tau_{n_k}^{(r+\epsilon')} < \tau^{(r)}}.
  \end{multline*}
  The first term goes to $0$, for $l\to \infty$, as we have discussed above. The second one does, as $k\to\infty$ owing to~\eqref{eq:44}. For each subsequence we can find again such subsubsequences so that $L^{2p}$-convergence holds true. But since $L^{2p}$-convergence is metrizable, convergence is equivalent to the statement that every subsequence admits a subsubsequence converging to the same limit, which we have just shown.
\end{proof}

We close this section with a result for the case where linear growth
of $B$ and $C$ holds true, but not necessarily global Lipschitz
continuity and boundedness. In this case, the solutions still exist
globally, cf. \cite[Theorem 3.20]{SFBPDir}.
\begin{thm}\label{thm:WZAling}
  Assume that Assumptions~\ref{a:A}, \ref{a:Ballloc} and \ref{a:Cloc} hold true and, in addition, that there exists an $M>0$ such that
  \begin{equation}
    \label{eq:50}
    \norm{B(u)}{0} + \norm{C(u)}{\HS(U; E_\alpha)} \leq M(1+ \norm{u}{\alpha}),\quad \forall\,u\in E_\alpha,
  \end{equation}
  and $D\sigma_k$ is globally bounded. Then, uniformly on compacts in probability,
  \[ \lim_{n\to\infty} \lim_{m\to\infty} Z_{m,n} = X.\]
\end{thm}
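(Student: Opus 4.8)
The plan is to obtain the claim from the localized Wong--Zakai approximation \autoref{thm:WZAlocal} by removing the auxiliary stopping times, exploiting that the linear growth hypothesis~\eqref{eq:50} rules out explosion and forces moment bounds that are uniform in the partition parameter $m$ and the truncation/dimension parameter $n$. Fix $p>1$. First I would record the global existence facts needed: by \cite[Theorem 3.20]{SFBPDir} the mild solution $X$ of~\eqref{eq:eeq} is global, so $\tau=T$ almost surely; and the random evolution equation~\eqref{eq:eeqWZAbd}, whose coefficients are Lipschitz on bounded sets and of linear growth under the present hypotheses, admits a global $\omega$-wise mild solution, $\tau_{m,n}=T$ almost surely, by a classical Gronwall argument with the $(t-s)^{-\alpha}$ singularity absorbed through \autoref{lem:gronwall}. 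Consequently the exit times $\tau^{(r)}$ and $\varsigma^{(r+\epsilon)}_{m,n}$ appearing in \autoref{thm:WZAlocal} are genuine first passage times over $[0,T]$, and $\tau^{(r)}\nearrow T$ almost surely as $r\to\infty$ by continuity of paths.

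The essential new estimate is the uniform moment bound
\[ C_p := \sup_{m,n\in\N}\;\E{\sup_{0\leq t\leq T}\norm{Z_{m,n}(t)}{\alpha}^{2p}} < \infty. \]
It is proved by the same scheme as the a priori bound~\eqref{eq:EsupXt}, now applied to~\eqref{eq:eeqWZAbd}: one estimates the drift term $\int_0^t S_{t-s}\big(B(Z_{m,n}(s))-\Sigma_\infty(Z_{m,n}(s))\big)\d s$ by \autoref{lem:StHa} together with the linear growth of $B$ and of the correction term $\Sigma_\infty$ (this last point being exactly what the extra hypothesis on $D\sigma_k$, together with $\norm{C(u)}{\HS(U;E_\alpha)}\leq M(1+\norm{u}{\alpha})$, secures); one estimates the interpolated-noise term $\sum_{k=1}^n\int_0^t S_{t-s}\sigma_k(Z_{m,n}(s))\dot\beta^k_m(s)\d s$ by the factorization method underlying the proofs of \autoref{lem:Eubu} and \autoref{lem:IntdiffNoise}, whose constants are independent of $m$, while $\norm{C_n(u)}{\HS(U;E_\alpha)}\leq\norm{C(u)}{\HS(U;E_\alpha)}$ keeps them independent of $n$; and one closes the resulting integral inequality with \autoref{lem:gronwall}. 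A parallel estimate gives $\sup_n\E{\sup_{0\leq t\leq T}\norm{X_n(t)}{\alpha}^{2p}}<\infty$ for the solutions $X_n$ of~\eqref{eq:SEEnoiseapprox}. In particular, by Chebyshev's inequality, for every $\rho>0$,
\[ \sup_{m,n\in\N}\P{\varsigma^{(\rho)}_{m,n}<T}\;\leq\;\sup_{m,n\in\N}\P{\sup_{0\leq t\leq T}\norm{Z_{m,n}(t)}{\alpha}\geq\rho}\;\leq\;\rho^{-2p}C_p\;\longrightarrow\;0\quad(\rho\to\infty). \]

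Finally I would assemble the estimates. Fix $\epsilon'>0$, $\epsilon>0$ and $r>0$, and write $\tilde X:=X(\,\cdot\wedge\tau^{(r)})$ and $\tilde Z_{m,n}:=Z_{m,n}(\,\cdot\wedge\varsigma^{(r+\epsilon)}_{m,n}\wedge\tau^{(r)})$ for the stopped processes from \autoref{thm:WZAlocal}. On the event $\{\tau^{(r)}=T\}\cap\{\varsigma^{(r+\epsilon)}_{m,n}=T\}$ one has $\tilde X=X$ and $\tilde Z_{m,n}=Z_{m,n}$, so a union bound together with Chebyshev's inequality yields
\begin{multline*}
  \P{\sup_{0\leq t\leq T}\norm{X(t)-Z_{m,n}(t)}{\alpha}>\epsilon'}\\
  \leq\;\P{\tau^{(r)}<T}+\P{\varsigma^{(r+\epsilon)}_{m,n}<T}+(\epsilon')^{-2p}\,\E{\sup_{0\leq t\leq T}\norm{\tilde X(t)-\tilde Z_{m,n}(t)}{\alpha}^{2p}}.
\end{multline*}
Taking $\limsup_{n\to\infty}\limsup_{m\to\infty}$, the last term tends to $0$ by \autoref{thm:WZAlocal} and the second is at most $(r+\epsilon)^{-2p}C_p$, so
\[ \limsup_{n\to\infty}\limsup_{m\to\infty}\P{\sup_{0\leq t\leq T}\norm{X(t)-Z_{m,n}(t)}{\alpha}>\epsilon'}\;\leq\;\P{\tau^{(r)}<T}+(r+\epsilon)^{-2p}C_p. \]
Letting $r\to\infty$ and using $\tau^{(r)}\nearrow T$ a.s.\ makes the right-hand side vanish, which is the asserted uniform convergence in probability; the fixed-$n$ inner limit $\lim_{m\to\infty}Z_{m,n}=X_n$ in probability is obtained by the same computation with $X_n$ in place of $X$ and the intermediate convergence from the proof of \autoref{thm:WZAlocal}, so that the iterated limit is understood exactly as there.

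The step I expect to be the main obstacle is the uniform moment bound $C_p<\infty$: it requires handling simultaneously the $(t-s)^{-\alpha}$ singularity of the analytic semigroup (via the extended Gronwall lemma), the piecewise-linear interpolated noise $\sum_k\sigma_k(Z_{m,n})\dot\beta^k_m$ (via the factorization method, which is what produces the $m$-uniformity), and the correction term $\Sigma_\infty$, whose linear growth is the very reason for requiring $D\sigma_k$ to be globally bounded. Everything else reduces to stopping-time bookkeeping and Chebyshev's inequality.
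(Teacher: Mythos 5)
Your overall strategy (stop at a large radius, invoke \autoref{thm:WZAlocal}, control the probability of an early exit, let the radius go to infinity) is a natural alternative to the paper's route, and the stopping‑time bookkeeping in the third paragraph is correct. The paper instead goes through \autoref{prop:Stopping2}: once one knows that all truncated approximations converge u.c.p. (which is \autoref{thm:WZAlocal} and the machinery behind it) and that the non-truncated solutions $X$, $X_n$, $Z_{m,n}$ are all global under~\eqref{eq:50}, the Kunze--van Neerven lifting result gives the global u.c.p. convergence at once, \emph{without any moment bounds on $Z_{m,n}$}. That is precisely what your argument needs and does not deliver.

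The genuine gap is the uniform moment bound $C_p := \sup_{m,n}\E[\sup_{t\leq T}\norm{Z_{m,n}(t)}{\alpha}^{2p}] < \infty$. This is not ``the same scheme as~\eqref{eq:EsupXt}'': the term $\int_0^t S_{t-s}\sigma_k(Z_{m,n}(s))\dot\beta^k_m(s)\,\d s$ is \emph{not} a stochastic convolution, and a na\"ive pathwise Gronwall estimate produces a constant involving $\sup_t\abs{\dot\beta^k_m(t)}$, which is of order $m^{1/2}$ and ruins the uniformity in $m$. You cannot invoke the factorization estimates from \autoref{lem:Eubu} and \autoref{lem:IntdiffNoise} either, because those lemmas are proved under \autoref{a:Ball} and \autoref{a:noisefinite}, i.e.\ \emph{globally bounded} $B$ and $\sigma_k$; moreover they estimate differences, not the size of $Z_{m,n}$, so they do not give an a priori bound by themselves. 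Obtaining $C_p<\infty$ uniformly in $m$ requires redoing the Wong--Zakai analysis to exploit the martingale structure hidden in the interpolated increments — essentially the same work as the approximation theorem itself — and is not established anywhere in the paper. Until that bound is supplied, the step $\P{\varsigma_{m,n}^{(\rho)}<T}\leq\rho^{-2p}C_p$ and hence the whole argument has a hole. Replacing it by an appeal to \autoref{prop:Stopping2}, as the paper does, closes the gap cleanly: you only need global existence of each $Z_{m,n}$ (which follows, $\omega$-wise, from the linear growth of $B$, $\Sigma_\infty$ and the $\sigma_k$ plus the extended Gronwall lemma, since $\dot\beta^k_m$ is bounded for each fixed $\omega$ and $m$), together with the already established local u.c.p. convergence on the truncated processes.
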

\begin{proof}
  First note that under the given constraints the solutions of~\eqref{eq:eeqloc},~\eqref{eq:eeqr} and~\eqref{eq:eeqWZA} exist globally and take values almost surely in $C([0,T]; E_\alpha)$, cf. \cite[Theorem 3.20]{SFBPDir}. We have shown already that the assumptions of \autoref{prop:Stopping2} are fulfilled which finally yields the convergence claim.
\end{proof}

%%% Local Variables: 
%%% mode: latex
%%% TeX-master: "0paper"
%%% End: 

\section[Forward Invariance]{Forward Invariance for Deterministic Equations}
\label{sec:inv}

Consider a separable real Banach space $E$ and the semilinear (deterministic) evolution equation
\begin{equation}
  \label{eq:eeqdet}
  \ddt u(t) = A u(t) + B(u(t)),\quad t\in [0,T],\qquad u(0) = u_0.
\end{equation}
where $A:\dom(A)\subset E\to E$ is a linear operator on $E$ and $B: E\to E$ is Borel measurable. In this section, we discuss conditions under which a closed set $\MM\subset E$ is forward invariant for~\eqref{eq:eeqdet}, i.\,e. $u_0\in \MM$ yields that the (local) solution $u$ takes values in $\MM$. 

Now, assume that $u:[0,T] \to E$ is differentiable at $0$ with $\dot u (0) = g$ and $u(0) = u_0$, i.\,e.
\[ u(t) = u_0 + t g + o(t),\quad t>0.\]
Hence, a necessary condition that there exists an $\epsilon>0$ such that $u(t) \in \MM$, for all $t\in [0,\epsilon)$ is given by,
\begin{equation}
  \dist_E(u_0 + t g; \MM) = o(t). \label{eq:nagumoVec}
\end{equation}
Conversely, when $\dot u = F\circ u$ for a function $F:E\to E$ the condition
\begin{equation}
  \forall u_0\in \MM:\quad \lim_{t\searrow 0} \tfrac1{t} \dist_E(u_0 + t F(u_0); \MM) = 0, \label{eq:nagumo}
\end{equation}
is called Nagumo- or tangency-condition and is well-known to be also
sufficient in many cases. 
Moreover, if $\MM$ is closed convex and replacing $\lim$ by $\liminf$
then \eqref{eq:nagumo} becomes
equivalent to the condition that for all $\phi \in E^*$ such that
$\phi (h) = \inf_{f\in \MM} \phi(f)$, it holds that $\phi(g) \geq 0$,
see \cite[Lemma 4.1]{deimling}. For a detailed discussion also in
connection to the geometry behind the Nagumo
condition~\eqref{eq:nagumo} we refer to~\cite{pavel1984differential}.

We go back to the theory of evolution equations, where
\[ F(u) := Au + B(u).\]
We additionally know that $A$ generates a strongly continuous semigroup $(S_t)$ in the applications we are interested in. From \cite[Section 4.1]{pavel1984differential} we extract the following.
\begin{prop}\label{prop:Stangential}
  Let $A$ be the generator of a strongly continuous semigroup $(S_t)$ on a Banach space $E$. For a closed subset $\MM\subset E$, $u_0\in \MM\cap \dom(A)$ and an element $v\in E$, the following tangential conditions are equivalent,
  \begin{enumerate}[label=(\roman*)]
  \item  $\lim_{t\searrow 0} \frac1t \dist_E(u_0 + t(Au_0 + v); \MM) = 0$, 
  \item $\lim_{t\searrow 0} \frac1t \dist_E(S_t u_0 + tv; \MM) = 0$.
  \end{enumerate}
\end{prop}
Moreover, the so called tangential points, which are the points for which (i) or (ii) are satisfied, can be identified in the following way.
\begin{prop}[{\cite[Prop 4.1.4]{pavel1984differential}}]
  Let $A$ be the generator of a strongly continuous semigroup $(S_t)$ on a Banach space $E$ and let $\MM\subset E$ be closed. Moreover, assume that $S_t(\MM)\subset \MM$. If $v\in E$ fulfills
  \[\lim_{t\searrow 0}\frac1t \dist_E(u_0 +tv;\MM) = 0,\qquad \forall u_0\in \MM,\]
  for some $u_0 \in \MM$, then $v$ also satisfies the second statement in Proposition~\ref{prop:Stangential}.
\end{prop}
This allows to separate the tangential conditions for $A$ and $B$. In fact, if $B$ satisfies~\eqref{eq:nagumo} for $u_0\in \MM$ and $S_t\MM \subset \MM$, then it also holds that
\[\lim_{t\searrow 0} \frac1t \dist_E(S_t u_0 + t B(u_0); \MM) = 0.\]
The converse direction does not hold, in general. However, in some special situation as when $B:E\to E$ is Lipschitz continuous, the latter conditions is known to be necessary and sufficient for forward invariance for evolution equations. We refer to~\cite[Chapter 4]{pavel1984differential} for a detailed discussion and proofs. 

The equations discussed in the previous sections are beyond the scope of these results. As in the previous sections, $B$ will be only continuous on a certain subspace of $E$. 

Recall from \autoref{sec:Pre} that when $A$ is the generator of an analytic $C_0$-semigroup of negative type, then we have defined the inter- and extrapolation spaces $E_\alpha$, $\alpha \in \R$ and for $\alpha \in (0,1)$ we have
\[ \dom(A) = E_1 \hookrightarrow E_\alpha \hookrightarrow E_0 = E,\]
Thus, Kuratowski's Theorem yields that $E_\alpha$ is a Borel subset of $E$ for $\alpha>0$. In the sequel, when $\MM\subset E$ we will use the notation $\MM_\alpha := \MM \cap E_\alpha$, $\alpha \geq 0$.

\begin{ass}\label{a:pruess}
  \begin{itemize}
  \item[(A)] $A$ is the generator of an analytic $C_0$-semigroup of negative type, denoted by $(S_t)$,
  \item[(B)] $B: \MM_\alpha \to E$ is Lipschitz continuous, for some $\alpha <1$,
  \item[(M)] $\MM\subset E$ is closed,
  \item[(N)] Assume that $S_t (\MM) \subset \MM$ and the so called Nagumo condition is satisfied, that is  
    \begin{equation*}
      \lim_{\epsilon \searrow 0} \tfrac1{\epsilon} \dist_E( u +\epsilon B(t,u); \MM) = 0,
    \end{equation*}
    for all $u\in \MM_\alpha$ and $t\in [0,T]$.
  \end{itemize}
\end{ass}
Note that also $\MM_\alpha$ is closed as a subset of $E_\alpha$ under Assumption~\ref{a:pruess}.

An established way to prove existence and forward invariance results in such a setting, but under weaker constraints on $B$, is the concept of $\epsilon$-approximate solutions, see \cite{pavel1984differential} or \cite{pruessInvariant} for instance. In order to construct these approximations and the solution one often uses compactness of the semigroup $(S_t)$, which we will not have in the situation of \autoref{sec:positivityresults}. Instead, we use a result of Pruess~\cite{pruessInvariant} in this direction relying estimates of the non-compactness of $B$. 
\begin{lem}\label{lem:Hausdorffm}
  Assume that $B: \MM_\alpha\to E$ is Lipschitz continuous with Lipschitz constant $L>0$, then for all Borel sets $G\subset \MM_\alpha $ it holds that 
  \[ \nu_{E}(B(G)) \leq  2L \nu_{E_\alpha}(G),\]
  where $\nu$ is the Hausdorff measure of non-compactness defined for $G\subset E$ as
  \[ \nu_E(G) := \inf\setc{r> 0}{G\text{ admits a finite covering of balls (in $E$) with radius $r$}}.\]
\end{lem}
\begin{proof}
  It is easy to see that for $K_{\alpha}(u,r)$, the $E_\alpha$-ball of radius $r>0$, centered at $u\in \MM_\alpha$ it holds that
  \begin{equation}
    \label{eq:45}
    B(K_\alpha(u,r)\cap \MM_\alpha) \subset K_0(B(u), L r),    
  \end{equation}
  where $L$ is the Lipschitz constant of $B$. %This directly extends to general Borel sets $G\subset \MM_\alpha$.
  Let $G\subset \MM_\alpha$ be a Borel set with $\nu_{E_\alpha}(G) = r< \infty$. For $r'>r$ let $n\in \N$ and $u_1$,...$u_n\in E_\alpha$ such that 
  \[G\subset \bigcup_{k=1}^n K_\alpha(u_k,r').\]
  Now, choose arbitrary $\tilde u_k \in \MM_\alpha \cap K_\alpha(u_k,r')$, $k=1,...,n$, and thus  
  \( G\subset \bigcup_{k=1}^n K_\alpha(\tilde u_k, 2r').\)
  With~\eqref{eq:45} we get 
  \begin{multline*}
    \label{eq:48}
    B(G) \subset B\left( \bigcup_{k=1}^n K_{\alpha}(\tilde u_k, 2r') \cap \MM_\alpha\right) =\\
    =  \bigcup_{k=1}^n B(K_\alpha(\tilde u_k; 2r')\cap \MM_\alpha) \subset \bigcup_{k=1}^n K_0(B(\tilde u_k), 2Lr').\qedhere
  \end{multline*}
\end{proof}

\begin{thm}[Pruess {\cite{pruessInvariant}}]\label{thm:viabPruess}
  Assume that Assumption~\ref{a:pruess} holds true for some $\alpha\in [0,1)$ and let $\gamma\in (\alpha,1)$. For all $u_0\in \MM_{\gamma}$ there exists a unique mild solution $u:[0,T]\to \MM$ of the evolution equation
  \begin{equation}
    \label{eq:13}
    \dot u(t) = Au(t) + B(u(t)),\qquad u(0) = u_0.    
  \end{equation}
  Moreover, $u$ is $E_{\gamma}$-continuous on $[0,T]$ and $E$-continuously differentiable on $(0,T]$.
\end{thm}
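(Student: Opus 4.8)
This is the main invariance result of Pr\"uess~\cite{pruessInvariant}; I outline the structure of the argument and indicate how the preliminary lemmas of \autoref{sec:Pre} enter, since the singular kernels coming from the analytic semigroup (rather than compactness of $(S_t)$) are what drives the proof.

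\emph{Step 1 (existence and uniqueness in $E_\gamma$).} First extend $B$ to a Lipschitz map $\widetilde B:E_\alpha\to E$ (this costs nothing on a Hilbert space, and for the applications in \autoref{sec:positivityresults} it is not even needed, since the approximations below only evaluate $B$ on $\MM$). Because $\gamma>\alpha$ we have $E_\gamma\hookrightarrow E_\alpha$, and \autoref{lem:StHa} gives $\norm{S_{t-s}h}{\gamma}\le K_{\gamma,0}(t-s)^{-\gamma}e^{-\delta(t-s)}\norm{h}{0}$, a kernel integrable at $s=t$. Hence
\[ \Phi(u)(t):= S_t u_0 + \int_0^t S_{t-s}\widetilde B(u(s))\,\d s \]
maps $C([0,T];E_\gamma)$ into itself, and with the weighted norm $\sup_{0\le t\le T}e^{-\lambda t}\norm{u(t)}{\gamma}$ for $\lambda$ large it becomes a contraction; Banach's fixed point theorem produces a unique global mild solution $u\in C([0,T];E_\gamma)$ of the $\widetilde B$-equation, and uniqueness for the original equation follows once $u$ is known to stay in $\MM$.

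\emph{Step 2 (forward invariance).} For $\epsilon>0$ construct an $\epsilon$-approximate solution $u_\epsilon$: on a partition of $[0,T]$ of mesh $\le\epsilon$ propagate by the frozen-coefficient formula $u_\epsilon(t)=S_{t-t_j}u_\epsilon(t_j)+\int_{t_j}^t S_{t-s}B(u_\epsilon(t_j))\,\d s$ on $[t_j,t_{j+1})$, and at each node $t_{j+1}$ replace the resulting value by a point of $\MM$ at distance $o(\epsilon)$ from it, which is possible precisely by the Nagumo condition in the $(S_t)$-form of \autoref{prop:Stangential}. One obtains $\dist_E(u_\epsilon(t),\MM)\to 0$ uniformly in $t$ together with a bound $\sup_{\epsilon>0,\,0\le t\le T}\norm{u_\epsilon(t)}{\gamma}<\infty$. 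The crucial point is relative compactness of $\{u_\epsilon(t):\epsilon>0\}$ for fixed $t$; since $(S_t)$ is not compact, one instead combines the smoothing estimate of \autoref{lem:StHa}, the bound $\nu_E(B(G))\le 2L\,\nu_{E_\alpha}(G)$ from \autoref{lem:Hausdorffm}, and the standard properties of the Hausdorff measure of non-compactness $\nu$ (subadditivity under bounded operators and under integration) to get
\[ \nu_{E_\alpha}\big(\{u_\epsilon(t):\epsilon>0\}\big)\le C\int_0^t (t-s)^{-\gamma}e^{-\delta(t-s)}\,\nu_{E_\alpha}\big(\{u_\epsilon(s):\epsilon>0\}\big)\,\d s , \]
whence \autoref{lem:gronwall} forces $\nu_{E_\alpha}(\{u_\epsilon(t):\epsilon>0\})=0$. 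Equicontinuity of $\{u_\epsilon\}$ (again from \autoref{lem:StHa}) and Arzel\`a--Ascoli yield a subsequence converging uniformly to some $u^\ast\in C([0,T];E_\alpha)$; passing to the limit in the mild equation shows $u^\ast$ is a mild solution, and $u^\ast(t)\in\MM$ since $\MM$ is closed and $\dist_E(u_\epsilon(t),\MM)\to 0$. By Step 1, $u^\ast=u$, so the mild solution is $\MM$-valued and solves the original equation.

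\emph{Step 3 (regularity).} $E_\gamma$-continuity on $[0,T]$ is read off the mild formula using strong continuity of $(S_t)$ on $E_\gamma$ and integrability of the kernel. For differentiability on $(0,T]$, applying \autoref{lem:StHa} to the increments $u(t)-u(t')$ gives $u\in C^{\gamma-\alpha}([0,T];E_\alpha)$, hence $s\mapsto B(u(s))$ is H\"older continuous into $E$; the classical parabolic regularity theorem for analytic semigroups then upgrades the mild solution to a strict one, i.e.\ $u(t)\in\dom(A)$ and $\dot u(t)=Au(t)+B(u(t))$ with $\dot u\in C((0,T];E)$. \emph{Main obstacle:} Step 2 — arranging the nodal corrections of the approximate solutions to be genuinely $o(\epsilon)$ uniformly (where the exact form of the Nagumo condition via \autoref{prop:Stangential} is used) and running the measure-of-non-compactness estimate with the singular kernel $(t-s)^{-\gamma}$ absorbed by the generalized Gronwall inequality \autoref{lem:gronwall}. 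This is exactly the point at which compactness of the semigroup, available in the classical references, is unavailable and must be replaced by \autoref{lem:Hausdorffm}.
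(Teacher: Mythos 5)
Your proposal is sound as an outline, but it takes a genuinely different and considerably more self-contained route than the paper. The paper does not re-prove Pr\"uss's invariance theorem: it verifies that \autoref{a:pruess} together with the Hausdorff non-compactness estimate of \autoref{lem:Hausdorffm} implies the abstract hypotheses (A), ($\Omega$), (Y), (F), (S), (L) of~\cite{pruessInvariant}, cites Pr\"uss's Theorem~2 for local existence together with $\MM$-invariance, derives an $E_\alpha$-a priori bound
\[ \norm{u(t)}{\alpha} \leq K\norm{u_0}{\alpha} + K_T\left(1+\int_0^t \norm{u(s)}{\alpha}\,\frac{\d s}{(t-s)^{\alpha}}\right) \]
from \autoref{lem:StHa} and closes it with the extended Gronwall inequality \autoref{lem:gronwall}, and finally appeals to Pr\"uss's non-continuation criterion (his Theorem~4) for globality and to his regularity theorem for the last claim. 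You instead (i) obtain a global mild solution in $C([0,T];E_\gamma)$ directly by a weighted-norm contraction after a Kirszbraun-type extension of $B$ off $\MM_\alpha$, which is a cleaner substitute for ``local existence $+$ a priori bound $+$ blow-up criterion'' but one the paper sidesteps because it never needs to evaluate $B$ outside $\MM_\alpha$; and (ii) re-run Pr\"uss's $\epsilon$-approximate-solution construction in full, with \autoref{prop:Stangential} supplying the nodal corrections, \autoref{lem:Hausdorffm} and \autoref{lem:gronwall} propagating the Hausdorff non-compactness, and Arzel\`a--Ascoli giving the limit — precisely the material the paper deliberately outsources to the citation, and the step where the real work hides (equicontinuity, uniform $o(\epsilon)$ of the corrections). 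One small slip: the non-compactness kernel should carry the exponent $(t-s)^{-\alpha}$, not $(t-s)^{-\gamma}$, since \autoref{lem:StHa} and \autoref{lem:Hausdorffm} combine to give $\nu_{E_\alpha}(S_{t-s}B(G)) \le K(t-s)^{-\alpha}\nu_E(B(G)) \le 2KL(t-s)^{-\alpha}\nu_{E_\alpha}(G)$; this is harmless for the Gronwall step because both exponents are below $1$, but the $\alpha$-version is what the lemmas actually produce and is the exponent appearing in the paper's own a priori estimate.
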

Let us collect the respective results from~\cite{pruessInvariant}. First note that Assumption~\ref{a:pruess} is sufficient for Assumptions (A), ($\Omega$), (Y), (F), (S) and (L) in~\cite{pruessInvariant}. Thanks to the estimate from Lemma~\ref{lem:Hausdorffm} we can apply~\cite[Theorem 2]{pruessInvariant} which yields local existence. Moreover, for any local solution $u$ it holds that
\begin{gather}
  \begin{split}
    \norm{u(t)}{\alpha} &\leq K \norm{u_0}{\alpha} + K_\alpha \int_0^t \norm{B(u(s))}{E} \tfrac{\d s}{(t-s)^\alpha} \\
    &\leq K \norm{u_0}{\alpha} + K_T \left(1+\int_0^t\norm{u(s)}{\alpha} \tfrac{\d s}{(t-s)^\alpha}\right).      
  \end{split}\label{eq:13a}
\end{gather}
Indeed, by Lipschitz continuity of $B$ there exists $M>0$ such that 
\[ \norm{S_t B(x)}{\alpha}  \leq K_{E,\alpha} t^{-\alpha} M \left(1 + \norm{x}{\alpha} \right).\]
Hence, by Lemma~\ref{lem:gronwall} 
\[\norm{u(t)}{\alpha} \leq K \left(1+ \norm{u_0}{\alpha}\right),\] 
for all $t\in[0,T]$ such that $u(t)$ solves~\eqref{eq:13}. Assuming that $u$ would be a non-continuable mild solution would now contradict \cite[Theorem 4]{pruessInvariant}, and thus, $u$ is a global mild solution. The last statement is then a consequence of the regularity theorem in~\cite{pruessInvariant}. Finally, by global Lipschitz assumptions on $B$ we get uniqueness of the solution.

\begin{rmk}
  In fact, $u$ is even a mild solution in $E_{\gamma}$, since $u_0\in E_{\gamma}$ and by \autoref{lem:StHa},
  \[ \int_0^t \norm{(-A)^{\gamma} S_{t-s} B(u(s))}{} \d s \leq M \int_0^t (t-s)^{-(\gamma- \alpha)} (1+ \norm{u(s)}{\alpha}) \d s <\infty.\]
\end{rmk}
\begin{rmk}
  By concatenation, the existence result extends to $[0,\infty)$ without further effort. 
\end{rmk}
We now replace the global Lipschitz assumption by the local one
\begin{equation}
  \label{a:invbloc}
  B:\MM_\alpha \to E \text{ is Lipschitz continuous on bounded sets}.   \tag{$B_{loc}$}
\end{equation}

\begin{thm}\label{thm:viabPruesslocal}
  Assume that \autoref{a:pruess} holds true for some $\alpha\in [0,1)$ with $(B)$ replaced by~\eqref{a:invbloc}, and fix $\gamma\in (\alpha,1)$. For all $u_0\in \MM_\gamma$, there exists a maximal $t_\infty>0$ and a unique local mild solution on $[0,t_\infty)$ of the evolution equation~\eqref{eq:13}. In particular, $u(t)\in \MM_\gamma$ for all $t\in [0,t_\infty)$. Moreover, $u$ is $E_{\gamma}$-continuous on $[0,t_\infty)$, $E$-continuously differentiable on $(0,t_\infty)$ and it holds that $t_\infty = \infty$ or 
  \[ \lim_{t\nearrow t_\infty} \norm{u(t)}{\alpha} = \infty.\]
\end{thm}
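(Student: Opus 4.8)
\emph{Strategy.} The plan is to cut $B$ off smoothly in the $\alpha$-norm and reduce to \autoref{thm:viabPruess}. For $r>0$ let $h_r$ and $\Xi_r(u)=h_r(\norm{u}{\alpha}^2)$ be as in \autoref{lem:truncfct} and put $B^{(r)}(u):=\Xi_r(u)\,B(u)$ for $u\in\MM_\alpha$. Arguing as in \autoref{lem:truncLip} (with $V=E_\alpha$ and $\Phi=B$, restricted to $\MM_\alpha$), $B^{(r)}:\MM_\alpha\to E$ is globally bounded and globally Lipschitz continuous, because $B$ is Lipschitz on the bounded set $\{u\in\MM_\alpha:\norm{u}{\alpha}\le r+1\}$, which contains the support of $\Xi_r$. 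The Nagumo condition (N) is inherited by $B^{(r)}$: for $u\in\MM_\alpha$ set $\lambda:=\Xi_r(u)\in[0,1]$; if $\lambda=0$ then $\dist_E(u;\MM)=0$, and if $\lambda>0$ then $\tfrac1\epsilon\dist_E(u+\epsilon\lambda B(u);\MM)=\lambda\cdot\tfrac1{\epsilon\lambda}\dist_E(u+(\epsilon\lambda)B(u);\MM)\to0$ as $\epsilon\searrow0$, by the homogeneity of the distance in the step length. Hence \autoref{a:pruess} holds with $B$ replaced by $B^{(r)}$, and \autoref{thm:viabPruess} yields, for our $u_0\in\MM_\gamma$, a unique \emph{global} mild solution $u^{(r)}:[0,\infty)\to\MM$ of $\dot v=Av+B^{(r)}(v)$, $v(0)=u_0$, which is $E_\gamma$-continuous on $[0,\infty)$ and $E$-continuously differentiable on $(0,\infty)$.

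\emph{Patching together a maximal solution.} Set $t_r:=\inf\{t\ge0:\norm{u^{(r)}(t)}{\alpha}>r\}$ (with $\inf\emptyset=\infty$). By $E_\alpha$-continuity, $\norm{u^{(r)}(t)}{\alpha}\le r$ on $[0,t_r)$, so there $\Xi_r(u^{(r)}(t))=1$ and $u^{(r)}$ solves~\eqref{eq:13}. For $r<r'$ let $t^\ast:=\sup\{t\in[0,t_r]:u^{(r)}=u^{(r')}\text{ on }[0,t]\}$; if $t^\ast<t_r$, then on a slightly larger interval $[0,t^\ast+\eta]\subset[0,t_r)$ we have $\norm{u^{(r)}(\cdot)}{\alpha}<r'$ and, by continuity together with $\norm{u^{(r')}(t^\ast)}{\alpha}=\norm{u^{(r)}(t^\ast)}{\alpha}<r'$, also $\norm{u^{(r')}(\cdot)}{\alpha}<r'$, so both processes solve $\dot v=Av+B^{(r')}(v)$ with the same initial datum on $[0,t^\ast+\eta]$, and uniqueness in \autoref{thm:viabPruess} forces $u^{(r)}=u^{(r')}$ there — contradicting the definition of $t^\ast$. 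Hence $u^{(r)}=u^{(r')}$ on $[0,t_r)$, which also shows $r\mapsto t_r$ is non-decreasing. Put $t_\infty:=\sup_{r>0}t_r$ and define $u(t):=u^{(r)}(t)$ whenever $t<t_r$; this is unambiguous and gives a mild solution of~\eqref{eq:13} on $[0,t_\infty)$ with the asserted regularity and with $u(t)\in\MM_\gamma$. Uniqueness of $u$ among local mild solutions of~\eqref{eq:13} follows again by truncation: two such solutions lie in a common $E_\alpha$-ball of radius $R$ on a short interval, hence both solve the $B^{(R)}$-equation there, which has a unique solution.

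\emph{The blow-up alternative, and the main obstacle.} It remains to show that $t_\infty<\infty$ forces $\norm{u(t)}{\alpha}\to\infty$. The crucial ingredient is a short-time a priori bound whose existence time depends only on the $\alpha$-norm of the datum: from the mild formula, $\norm{S_t}{\Lbd(E_\alpha)}\le Me^{-\delta t}$ and \autoref{lem:StHa} one gets $\norm{u^{(\rho)}(t)}{\alpha}\le M\norm{u_0}{\alpha}+\tfrac{K_\alpha M_\rho}{1-\alpha}\,t^{1-\alpha}$ with $M_\rho:=\sup_{u\in\MM_\alpha}\norm{B^{(\rho)}(u)}{0}<\infty$, so there is $\delta(\rho)>0$ such that $\norm{u_0}{\alpha}\le\rho/(2M)$ implies $\norm{u^{(\rho)}(t)}{\alpha}\le\rho$ on $[0,\delta(\rho)]$, where then $u^{(\rho)}$ solves~\eqref{eq:13}. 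Suppose now $t_\infty<\infty$ and, for contradiction, $\liminf_{t\nearrow t_\infty}\norm{u(t)}{\alpha}=:R<\infty$; put $\rho:=2M(R+1)$ and choose $s\in(t_\infty-\delta(\rho)/2,t_\infty)$ with $\norm{u(s)}{\alpha}\le R+1\le\rho/(2M)$ (possible since $R<R+1$). Then $u(s)\in\MM_\gamma$, and restarting the $B^{(\rho)}$-equation at time $s$ with datum $u(s)$ (the equation being autonomous) yields a global solution which, by the above bound and uniqueness of~\eqref{eq:13}, agrees with $u$ on $[s,\min(t_\infty,s+\delta(\rho)))$ and solves~\eqref{eq:13} on $[s,s+\delta(\rho)]$; gluing it to $u$ extends $u$ to a mild solution of~\eqref{eq:13} on $[0,s+\delta(\rho))\supsetneq[0,t_\infty]$, contradicting maximality of $t_\infty$. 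Hence $\lim_{t\nearrow t_\infty}\norm{u(t)}{\alpha}=\infty$. The two delicate points are the inheritance of the tangency condition (N) by the truncated drift $B^{(r)}$ — handled by the scaling remark in the first paragraph — and this last step: patching alone only gives $\limsup=\infty$, and it is precisely the uniform short-time existence estimate that rules out an oscillatory blow-up and upgrades this to $\lim=\infty$.
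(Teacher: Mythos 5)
Your proof is correct and follows essentially the same route as the paper: truncate $B$ in the $\alpha$-norm to recover global Lipschitz bounds, observe that the Nagumo condition is preserved under the multiplicative cutoff via the scaling $\epsilon\mapsto\lambda\epsilon$ in the distance quotient (this is exactly the paper's computation with $\epsilon'=\tilde c\,\epsilon$), apply \autoref{thm:viabPruess} to get global $\MM$-valued solutions of the truncated problems, and patch them together using uniqueness. The one genuine difference is that you spell out the blow-up alternative in full, via the short-time a priori estimate
\[
\norm{u^{(\rho)}(t)}{\alpha}\le M\norm{u_0}{\alpha}+\tfrac{K_\alpha M_\rho}{1-\alpha}\,t^{1-\alpha}
\]
and a restarting argument that upgrades $\limsup_{t\nearrow t_\infty}\norm{u(t)}{\alpha}=\infty$ (which the patching gives directly, since $\norm{u(t_N)}{\alpha}=N$) to the claimed $\lim=\infty$. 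The paper states this final alternative without proof; your argument supplies exactly the missing step, namely that a finite $\liminf$ would allow you to restart below the cutoff level $\rho$, produce a mild solution of~\eqref{eq:13} on $[s,s+\delta(\rho)]$ with $s+\delta(\rho)>t_\infty$, and thereby force $t_N>t_\infty$ for large $N$ — a contradiction. Two cosmetic remarks: you truncate with $h_r(\norm{u}{\alpha}^2)$ to reuse \autoref{lem:truncfct}, while the paper uses $h_N(\norm{u}{\alpha})$; both work since only Lipschitz continuity of the cutoff is needed, not differentiability at the origin. Also note the paper's definition $t_N:=\inf\{\dots\}\wedge T$ tacitly appeals to the earlier remark that \autoref{thm:viabPruess} extends to $[0,\infty)$ by concatenation, which your version handles more cleanly by working on $[0,\infty)$ from the start.
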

\begin{proof}
  For $N\in \N$ let $h_N\in C^\infty([0,\infty); \R)$ be a non-increasing function such that $h=1$ on $[0,N]$ and $h=0$ on $[N+1,\infty)$. Moreover, assume that 
  \[\sup_{N\in \N} \norm{h_N'}{\infty} <\infty.\]
  We now get that $B_N(u):= h_N(\norm{u}{\alpha}) B(u)$ is globally Lipschitz continuous. To verify the Nagumo condition, let $\tilde u\in \MM_\alpha$. Without loss of generality assume that $\tilde c:= h_N(\norm{\tilde u}{\alpha})>0$. Writing $\epsilon' := \tilde c\epsilon$ we get 
  \begin{equation}
    \label{eq:34}
    \frac1{\epsilon} \dist_E( \tilde u  + \epsilon h_N(\norm{\tilde u}{\alpha}) B(\tilde u) ;\MM) %\leq \\
    \leq \frac{\tilde c}{\epsilon'}  \dist_E( \tilde u  + \epsilon' B(\tilde u) ;\MM) \longrightarrow 0,
  \end{equation}
  as $\epsilon \searrow 0$. Hence, the assumptions of Theorem~\ref{thm:viabPruess} are satisfied for the localized equation
  \[ \ddt u = Au + B_N(u(t)),\qquad u(0) = u_0,\]
  and we get for each $N\in \N$ a unique global mild solution, say $u_N$. Set 
  \[ t_N := \inf\{ t\geq 0 \,\vert \norm{u_N(t)}{\alpha} > N\}\wedge T,\]
  then by uniqueness claim of Theorem~\ref{thm:viabPruess} it holds that $u_N = u_{N+1} = u_{N+k}$ on $[0,t_N]$ and $(t_N)_{N\in \N}$ is an increasing sequence. Set $t_\infty := \lim_{N\to \infty} t_N$ and 
  \[u(t) := u_N(t),\qquad \text{on } [0,t_N]\]
  which is well-defined. For $t<t_\infty$, then exists an $N\in \N$ such that $t<t_N$ and thus,
  \[u(t) =u_N(t) =  S_t u_0 + \int_0^t S_{t-s}B(u_N(s)) \d s = S_tu_0 + \int_0^t S_{t-s} B(u(s)) \d s.\]
  In fact, either $t_\infty = \infty$ and $u$ exists even on all of $[0,\infty)$ or $t_\infty <\infty$ and 
  \[\lim_{t\nearrow t_\infty} \norm{u(t)}{\alpha}  = \infty.\qedhere\]
\end{proof}
By approximation, the existence results extend to all initial values in $\MM_\alpha$, instead of $\MM_\gamma$ only:
\begin{cor}\label{cor:startinalpha}
  Assume that Assumption~\ref{a:pruess} holds true but instead of (B)
  assume that there exists an open set $\cO$ in $E_\alpha$ such that
  $\MM_\alpha \subset \cO$ and $B: \cO \to E$ is Lipschitz continuous
  on bounded sets. Then, the statement of Theorem~\ref{thm:viabPruesslocal} also holds
  true with $\gamma = \alpha$. 
\end{cor}
\begin{proof}
  First, by standard existence results \cite[Thms 3.3.3]{henryGeo}
  there exists a unique mild solution $u$ in $E_\alpha$, for all
  initial data $u_0 \in \cO$. By approximation, we show that $u$ stays
  in $\MM$ when started there. Let $\gamma'\in (\alpha, 1)$.

  Given $u_0\in \MM_\alpha$, set $u_0^{(n)} := S_{\frac1{n}} u_0$,
  which converges to $u_0$ in $E_\alpha$. Since $(S_t)$ is analytic
  and $\MM$ is $(S_t)$-invariant it holds that $u_0^n\in \MM_{\gamma'}$ and thus, Theorem~\ref{thm:viabPruesslocal} yields unique maximal mild solutions $u^{n}$ of~\eqref{eq:13} for initial data $u_0^{n}$. We denote the explosion times by $t_\infty^n$ for $u^{n}$ and by $t_\infty$ for $u$. 
  
  By continuity in initial data, it holds that $u^n \to u$ in $E_\alpha$, uniformly on compact subintervals of $[0,t_\infty)$, see \cite[Thm 3.4.1]{henryGeo}. Since $\MM_\alpha$ is closed in $E_\alpha$, this finishes the proof.
\end{proof}

We close this section by proving an easy-to-check condition sufficient for the Nagumo condition~\ref{eq:nagumo}.

\begin{lem}\label{lem:pip}
  Let $I\subset \R$ be a not necessarily bounded interval, $E = L^2(I)$ and consider the closed set 
  \[\MM := \setc{h\in H}{h(\xi) \geq 0 \;\d \xi-\text{a.e.}}.\] 
  Assume that $V\hookrightarrow E$ is an arbitrary Banach space and 
  \[F:V\to E\]
  Lipschitz continuous, satisfying the point-wise inward-pointing property $\d \xi$-a.e.
  \begin{equation}
    \label{eq:pip}
    h\in  \MM_V:=\MM\cap V, \text{ and } \xi\in I: h(\xi) = 0 \Longrightarrow  F(h)(\xi) \geq 0.   %\tag{PIP}
  \end{equation}
  Then, $F$ satisfies the Nagumo condition~\eqref{eq:nagumo}, i.\,e.
  \begin{equation*}
    \lim_{\epsilon\searrow 0} \epsilon^{-1}\dist_E\left(g + \epsilon F(g), \MM\right) = 0,\qquad \forall\, g \in \MM_V.
  \end{equation*}
\end{lem}
\begin{proof}
  Fix $g\in \MM_{V}$, $\epsilon>0$ and set
  \[ \underline{F} := \operatorname{ess}\inf\setc{F(g)(\xi)}{\xi \in I}\geq-\infty\]
  If $\underline{F} \geq 0$ then obviously~\eqref{eq:nagumo} holds true, so it suffices to consider the case $\underline{F} <0$. Define 
  \[ h_\epsilon(\xi):= \max\{ g(\xi) + \epsilon F(g)(\xi), 0\},\;\xi \in I,\]
  which is an element of $\MM$ by definition. Note that~\eqref{eq:pip} implies that for $\d\xi$ almost all $\xi \in I$
  \[ - \epsilon F(g)(\xi) > g(\xi) \Longrightarrow g(\xi) >0.\]
  Here, recall that $g\geq 0$ a.\,e. Moreover,
  \begin{gather*}
    \begin{split}
      \int_I \left(g(\xi) + \epsilon F(g)(\xi) - h_\epsilon(\xi)\right)^2 \d\xi &= \int_I \left(g(\xi) + \epsilon F(g)(\xi) \right)^2 \1_{g(.) +\epsilon F(g)(.) < 0}(\xi)\d \xi\\
      &= \int_I \left(- \epsilon F(g)(\xi)  - g(\xi)\right)^2 \1_{-\epsilon F(g)(.) > g(.) >  0}(\xi)\d \xi\\
      &\leq  \epsilon^2 \int_I F(g)(\xi)^2 \1_{\epsilon F(g)(.) > g(.) >  0}(\xi)\d \xi \\
      &= o(\epsilon^2).
    \end{split}
  \end{gather*}
  The latter estimate holds because~\eqref{eq:pip} yields $\d\xi$-a.e. 
  \[F(g)(\xi) \1_{-\epsilon F(g)(.) > g(.) >  0}(\xi) \longrightarrow 0, \]
  and by dominated convergence theorem the convergence is also true in $L^2$. Indeed, $h_\epsilon$ is the minimal projection of $g+\epsilon F(g)$, $\epsilon>0$, onto $\MM$ in the sense that 
  \[ \dist_E(g+\epsilon F(g); \MM) = \norm{g + \epsilon F(g) - h_\epsilon}{E},\]
  which then finishes the proof.
\end{proof}

%%% Local Variables:
%%% mode: latex
%%% TeX-master: "0paper"
%%% End:

\section[Proofs]{Phase Separation and Approximation: Proofs}
\label{sec:positivityproofs}

We now apply the previous two sections to \eqref{eq:mbp}. Using the notation from Section~\ref{sec:positivityresults}, we will work on the spaces,
\[\L^2:= L^2(\R_+)\oplus \L^2(\R_+) \oplus \R,\quad \H^k(\R_+) := H^k(\R_+) \oplus H^k(\R_+)\oplus \R.\]
In order to reformulate the coupled systems of S(P)DEs~\eqref{eq:SPDE}, we define the coefficients
\begin{align*}
  \cA =& \begin{pmatrix}
    \eta_+ \Delta_+ & 0 & 0 \\ 
    0 & \eta_- \Delta_-  & 0 \\
    0 & 0 &0
  \end{pmatrix} - c\Id, \\
  \cB(u)(x) =& \begin{pmatrix}
    \mu_1(x, u_1(x), u_1'(x))  + \ddx u_1(x) \* \varrho \left(\cI(u)\right) \\
    \mu_2(x, u_2(x), u_2'(x))  - \ddx u_2(x) \* \varrho \left(\cI(u)\right) \\
    \varrho \left(\cI(u)\right) 
  \end{pmatrix}+ c\Id,\\
    \Sigma_n(u)(x) =& \frac12\sum_{k=1}^n
    \begin{pmatrix}
      \ddy \sigma_1(x,u_1(x))\sigma_1(x,u_1(x)) ( T_\zeta e_k(u_3+ x))^2\\
      \ddy \sigma_2(x,u_2(x))\sigma_2(x,u_2(x)) ( T_\zeta e_k(u_3 - x))^2\\
      0
    \end{pmatrix},\\
  \cC(u)(w)(x) =& \begin{pmatrix}
    \sigma_1(x, u_1(x)) T_\zeta w (x_*+x)  \\
    \sigma_2(x, u_2(x)) T_\zeta w(x_* -x)\\
    0
  \end{pmatrix},
\end{align*}
where 
\begin{equation*}
  \cI(u) :=
  \begin{cases}
    (\ddx u_1, \ddx u_2),&\kappa_+ = \kappa_- = \infty,\\
    (u_1, u_2),&\kappa_+, \kappa_-< \infty,
  \end{cases}  
\end{equation*}
for $ u= (u_1,u_2,x_*)\in \H^2,\,w\in U,\,x\geq 0$. Here, we write $\mu_1 = \mu_+$, $\mu_2(x,y,z):= -\mu(-x, y,-z)$, and $\sigma_1 := \sigma_+$, $\sigma_-:= \sigma_-(-x,y)$. $x,y,z\in \R$.

As we will see below, $\Sigma_n$ converges under sufficient assumptions on $\sigma_{1\slash 2}$ and $\zeta$ strongly to 
\begin{equation}
  \label{eq:42}
  \Sigma_\infty(u)(x) = \frac12 \begin{pmatrix}
    \ddy \sigma_1(x,u_1(x))\sigma_1(x,u_1(x)) \norm{\zeta(u_3+x,.)}{L^2(\R)}^2\\
    \ddy \sigma_2(x,u_2(x))\sigma_2(x,u_2(x))\norm{\zeta(u_3 -x,.)}{L^2(\R)}^2\\
    0
  \end{pmatrix},
\end{equation}
The domain of the diagonal operator $\cA$ is then given by
\[ \dom(\cA) = \dom(\Delta_+) \times \dom(\Delta_-) \times \R,\]
where $\Delta_+$ and $\Delta_-$ denote the Laplacian on $\R_+$ with respective boundary conditions defined in~\eqref{eq:bc}. The constant $c>0$ is arbitrary and used to shift the spectrum of $-\cA$ to the positive half-line, so that $-\cA$ is positive self-adjoint. Hence, its fractional powers $(-\cA)^{\alpha}$, $\alpha \in \R$, are well defined and we set $E:= \L^2$. Let us shortly note that $E_\alpha \subset \H^{2\alpha}$ for all $\alpha>0$ and $E_1 = \DA$ with equivalence of norms. 

Writing $X=(u_1,u_2,x_*)$,~\eqref{eq:SPDE} becomes the stochastic evolution equation
\begin{equation}
  \d X(t) = \left[ \cA X(t) + \cB(X(t))\right] \d t + \cC(X(t)) \d W_t, \label{eq:SEEq}
\end{equation}
with initial conditions $X(0) = X_0\in \dom(\cA)$. The approximating equations~\eqref{eq:PDEwza} then become random evolution equations, with initial data $Z_{m,n}(0)  = X_0$, $m$, $n\in \N$, which read (piecewise where $\do \beta_m^k$ is well-defined) as
\begin{equation}
  \ddt Z_{m,n}(t) =  \cA Z_{m,n}(t) + \cB(Z_{m,n}(t))- \Sigma_\infty(Z_{m,n}(t)) + \sum_{k=1}^n (\cC(Z_{m,n}(t)) e_k) \dot \beta_m^k(r). \label{eq:SEEqWZA} 
\end{equation}
\begin{defn}
  A set $\MM\subset \L^2$ is called forward invariant for the stochastic evolution equation~\eqref{eq:SEEq} with initial conditions $X(0) = X_0$, if $X_0\in \MM$ yields $X \in \MM$ on $\llbrak 0,\tau \llbrak$, where $(X,\tau)$ is the unique maximal mild solution of~\eqref{eq:SEEq}.
\end{defn}
\begin{rmk}
  As we have intensively discussed in the previous section, the forward invariance property is defined in the same way for deterministic or random evolution equations. 
\end{rmk}

For the following discussion, set 
\begin{gather*}
  \MM := \{ (u_1,u_2,x) \in \L^2 \,\vert \, u_1 \geq 0,\, u_2 \leq 0\, \text{a.\,e.}\}= L^2_+\times L^2_- \times \R,\\
  L^2_{+} := \{ u\in L^2(\R_+)\,\vert \, u\geq 0\,\text{a.\,e.}\},\quad 
  L^2_{-} := \{ u\in L^2(\R_+)\,\vert \, u\leq 0\,\text{a.\,e.}\}.
\end{gather*}

The following example illustrates why we make the detour using the geometric criterions from Section~\ref{sec:inv}, instead of the direct, infinite-dimensional formulation of ``inward-pointing'' and ``parallel to the boundary'' constraints.
\begin{ex}\label{ex:parallelinfinite}
  Set $E:= L^2(\R_+)$, $A:= \Delta$ with Dirichlet boundary conditions, say, and $C(u)w := \sigma(x,u(x)) T_\zeta w(x)$, where $\sigma$ is as in \autoref{a:sigma}. A classical approach to prove forward invariance for the cone of non-negative functions in $L^2$ is to show 
  \[ \d \norm{ X(t)^{-}}{E}^2 \leq 0,\quad \forall t\geq 0,\]
  for $u^-:= \min\{0, u\}$, see~\cite[Lemma 3.6]{stannatAnalysis} for a mathematically rigorous procedure. Here, $X$ is the solution of the stochastic evolution equation
  \begin{equation}
    \label{eq:19}
    \d X(t) = AX(t) \d t + C(X(t))\d W_t.
  \end{equation}
  At least formally, on that way one ends up with the ``parallel-to-the-boundary'' condition
  \[\langle C(u)w, u^-\rangle = 0,\quad \forall u\in H^1(\R_+).\]  
  With Fubini theorem, this can be rewritten as
  \begin{multline}
    \label{eq:22}
    0 = \int_0^\infty \sigma(x,u(x)) u^-(x) T_\zeta w(x)\d x = \int_\R \int_0^\infty \sigma(x,u(x)) u^-(x) \zeta(x,y) \d x\,  w(y)\d y.
  \end{multline}
  Since the equality has to be true for all $w\in L^2(\R)$, this requires the inner integral to be $0$ for all $y\in \R$, and thus
  \[\int_0^\infty \sigma(x,u(x)) u(x)\1_{u(x) \leq 0} \zeta(x,y) \d y  = 0.\]
  Except for degenerate choices of $\zeta$ and due to differentiability constraints on $\sigma$, this excludes the case $\sigma(x,u(x)) = \sigma\* u(x)$ for a constant $\sigma\in \R$ so that the condition would be too restrictive.
\end{ex}
We now discuss Dirichlet and first order boundary conditions separately. Since we will be able to reuse many calculations for Dirichlet boundary conditions, we start with the first order case.

\subsection{First Order Boundary Conditions}
\label{ssec:proofs1}
Let $\kappa_1$, $\kappa_2<\infty$, then
\begin{equation}
  \label{eq:24}
  \dom(\Delta_+) = \{u\in H^2(\R_+)\,\vert\, \ddx u(0) = \kappa_1 u(0)\},\quad  \dom(\Delta_-) = \{u\in H^2(\R_+)\,\vert\, \ddx u(0) = \kappa_2 u(0)\},
\end{equation}
and $\DA = \dom(\Delta_+) \times \dom(\Delta_-) \times \R$. Recall that up to equivalences of norms $E = \L^2$, $E_1 = \DA$ and
$E_{\frac12} = \H^1$, cf. \cite{grisvardCara}.
\begin{prop}\label{prop:locLipE12}
  Let Assumptions~\ref{a:rho}, \ref{a:zeta},~\ref{a:mu1} and~\ref{a:sigma1} hold true. Then, $\cB:E_{\frac12} \to E$ and $\cC: E_{\frac12} \to \HS(U;E_{\frac12})$ are Lipschitz continuous on bounded sets. 
\end{prop}
\begin{proof}
  This is shown in~\cite[Lemma 3.6 and 3.11.(i)]{SFBP1stOrder}.
\end{proof}
Moreover, we observe from Appendix~\ref{A:noise},~\autoref{thm:Cdiff}, that under Assumptions~\ref{a:zeta} and~\ref{a:sigma1}, $\cC:E_{\sfrac12} \to \HS(U;E_{\sfrac12})$ of class $C^2$ and its derivatives map bounded sets into bounded sets. It remains to show that the $\Sigma_n$ admit Lipschitz constants uniformly in $n\in\N$.
\begin{lem}\label{lem:SigmaLipH1}
  Assume that~\ref{a:zeta} and \ref{a:sigma1} are satisfied. For all $N\in\N$ there exists $L_\Sigma^{(N)}>0$ such that for all $u$, $v\in \H^1$ with norm smaller than $N$ it holds that
  \[\norm{\Sigma_n(u) - \Sigma_n(v)}{\H^1} \leq L_\Sigma^{(N)} \norm{u-v}{\H^1},\quad \forall n\in\N.\]
\end{lem}
\begin{proof}
  First recall that for all $x\in \R$, by Parseval's identity
  \begin{equation}
    \label{eq:parseval}
    \sum_{k=1}^\infty \abs{ T_{\zeta}e_k(x)}^2 = \sum_{k=1}^\infty \abs{\scal{\zeta(x,.)}{e_k}{L^2(\R)}}^2 = \norm{\zeta(x,.)}{L^2}^2.
  \end{equation}
  \autoref{lem:Tc} tells us that $T_\zeta e_k$ takes values in the Banach algebra $BUC^1(\R_+)$ and so does $(T_\zeta e_k)^2$. Hence, with $N_{D\sigma\sigma}(u):= \ddy \sigma(.,u) \sigma(.,u)$,  and \autoref{lem:HSest},
  \begin{multline}
    \label{eq:37}
    \norm{2\Sigma_n^1(u) -2 \Sigma_n^1(v)}{H^1}\leq \\
    \leq K \norm{ N_{D\sigma\sigma}(u) - N_{D\sigma\sigma}(v)}{H^1} \sup_{z\in \R}\sum_{i=0}^1 \sum_{k=1}^n \abs{ T_{\zeta^{(i)}} e_k(z)}^2  \\ 
    +  K\norm{ N_{D\sigma\sigma}(u)}{H^1(\R_+)}\sup_{z\in \R}\sum_{i=0}^1 \sum_{k=1}^n \abs{ T_{\zeta^{(i)}_{u,v}} e_k(z)}^2\leq  \\
    \leq K \norm{ N_{D\sigma\sigma}(u) - N_{D\sigma\sigma}(v)}{H^1}  \sup_{z\in \R}\sum_{i=0}^1 \norm{\zeta^{(i)}(z,.)}{L^2} \\
    +  K\norm{ N_{D\sigma\sigma}(u)}{H^1(\R_+)} \sup_{z\in \R}\sum_{i=0}^1  \norm{\zeta^{(i)}(z-u_3,.) - \zeta^{(i)}(z-v_3,.)}{L^2}^2 
  \end{multline}  
  where we set $\zeta_{u,v}(x,y) := \zeta(x-u_3,y) -\zeta(x-v_3,y)$. Similar to~\eqref{eq:zetaft} we get by application of fundamental theorem of calculus and Fubini theorem
  \begin{multline}
    \label{eq:38}
    \int_\R \abs{\zeta^{(i)}(z-x,y) -\zeta^{(i)}(z-\tilde x,y)}^2 \d y  \\
    \leq \int_\R\int_0^1 \abs{\zeta^{(i+1)}(z-x+\epsilon(x-\tilde x),y)}^2 \abs{x-\tilde x}^2\d y \d\epsilon \\
    \leq  \abs{x-\tilde x}^2 \sup_{z\in \R} \norm{\zeta^{(i+1)}(z,.)}{L^2}^2,
  \end{multline}
  which is finite by \autoref{a:zeta}. Note that, since $\sigma_1$ and $\sigma_2$ fulfill \autoref{a:mugrowthsD} for $m=2$, it holds that $\sigma \ddy\sigma$ fulfills this assumption for $m=1$ and thus, by \autoref{thm:Nlip} the Nemytskii operator $N_{D\sigma\sigma}$ is Lipschitz on bounded sets on $H^1(R_+)$, for $\sigma= \sigma_1$ and $\sigma=\sigma_2$. Hence, the local Lipschitz constants of $\Sigma_n$ depend on $\sigma_1$, $\sigma_2$ and $\zeta$ only, but are particularly independent of $n\in \N$. 
\end{proof}
\begin{lem}\label{lem:SigmaConvH1}
  Let Assumption~\ref{a:zeta} and \ref{a:sigma1} be satisfied. Then, for $\Sigma_\infty$ defined in~\eqref{eq:42}, it holds that $\lim_{n\to \infty} \Sigma_n(u) = \Sigma_\infty(u) \in \H^1$, for all $u\in \H^1$. 
\end{lem}
\begin{proof}
  Again, let $\sigma= \sigma_1$ or $\sigma= \sigma_2$ and set $N_{D\sigma\sigma}(u):=\ddy \sigma(x,u(x))\sigma(x,u(x))$ and fix $u\in H^1(\R_+)$ and $z\in \R$. By Parseval's identity, for all $x\in \R$, 
  \[\sum_{k=1}^n \ddy \sigma(x,u(x))\sigma(x,u(x))  T_\zeta e_k(x-z)^2\longrightarrow \ddy \sigma(x,u(x))\sigma(x,u(x)) \norm{\zeta(x-z,.)}{L^2}^2,\] 
  as $n\to\infty$. Moreover, the sequence is bounded by the square-integrable function
  \[ x\mapsto N_{D\sigma\sigma}(u)(x) \sup_{z\in \R}\norm{\zeta(z,.)}{L^2}^2,\]
  and hence, Lebesgue's dominated convergence theorem yields that $\Sigma_n \to \Sigma_{\infty}$ in $L^2(\R_+)$. The first weak derivative is given by 
  \begin{gather}
    \label{eq:41}
    \begin{split}
          \tfrac{\d}{\d x}  N_{D\sigma\sigma}(u)(x) \sum_{k=1}^n \abs{T_\zeta e_k(x-z)}^2 
          &= \left(\tfrac{\d}{\d x} N_{D\sigma\sigma}(u)(x) \right) \sum_{k=1}^n \abs{T_\zeta e_k(x-z)}^2 \\
          &\quad + 2N_{D\sigma\sigma}(u)(x) \sum_{k=1}^n T_{\zeta'}e_k(x-z) T_{\zeta} e_k(x-z).
    \end{split}
\end{gather}
More detailed computations concerning the derivatives of $T_\zeta$ will be given in \autoref{A:noise}, below. Note that the second summand converges, again by Parseval's identity, to
\[ N_{D\sigma\sigma}(u)(x)\scal{\zeta'(x-z,.)}{\zeta(x-z,.)}{L^2(\R)}.\]
By dominated convergence theorem, the series convergences in $L^2(\R_+)$ and in the same way we can treat the first summand in~\eqref{eq:41}. Summarizing, $\Sigma_n(u)$ is $\H^1$-convergent and thus the limit $\Sigma_\infty(u)$ is an element of $\H^1$, too.
\end{proof}

Collecting the latter two results, the assumptions of \autoref{thm:WZAlocal} are satisfied and we observe for all $r>0$, $\epsilon >0$,
\begin{equation}
  \label{eq:wzconv1storder}
  \lim_{n\to \infty}\lim_{m\to \infty} \EE\sup_{0\leq t\leq T}\norm{X(t\wedge \tau^{(r)}) - Z_{m,n}(t\wedge \tau^{(r+ \epsilon)}_{m,n} \wedge \tau^{(r)})}{\H^1}^{2p} = 0.
\end{equation}
This already finishes the proof of \autoref{thm:wza1}. Part (b) of \autoref{thm:fwdinv} is covered by the following theorem.

\begin{thm}\label{thm:fwdinvH1}
  Assume that Assumptions~\ref{a:rho},~\ref{a:zeta},~\ref{a:mu1},~\ref{a:sigma1} and in addition the pointwise inward pointing assumption~\ref{a:inpoint} hold true. Then, the set $\MM\cap \H^1$ is forward invariant for the stochastic evolution equation~\eqref{eq:SEEq}.
\end{thm}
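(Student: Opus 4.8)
\emph{Proof strategy.} The plan is to transfer forward invariance from the random, $\omega$-wise deterministic Wong--Zakai equations to the stochastic equation \eqref{eq:SEEq} by means of the approximation \eqref{eq:wzconv1storder} established above, and to prove it for the deterministic equations with the invariance theory of \autoref{sec:inv}. Fix $r>0$ and $\epsilon>0$. For $\PP$-almost every $\omega$ the truncated approximation $Z^{(r+\epsilon)}_{m,n}$ is a global mild solution which, restricted to a grid cell $[i\delta_m,(i+1)\delta_m)$, solves the autonomous semilinear equation $\dot u = \cA u + F_i(u)$ with
\[
F_i(u) := \cB^{(r+\epsilon)}(u) - \Sigma^{(r+\epsilon)}_\infty(u) + \sum_{k=1}^n \sigma^{(r+\epsilon)}_k(u)\, \dot\beta^k_m(i\delta_m).
\]
By \autoref{prop:locLipE12}, \autoref{thm:Cdiff}, \autoref{lem:SigmaLipH1}, \autoref{lem:SigmaConvH1} and \autoref{lem:trunc} applied with $\alpha=\tfrac12$, each $F_i\colon E_{1/2}\to E$ is bounded and globally Lipschitz; moreover $\MM=L^2_+\times L^2_-\times\R$ is closed in $E=\L^2$, $\cA$ generates an analytic $C_0$-semigroup of negative type, and $S_t=e^{t\cA}$ preserves $\MM$ because it acts diagonally through multiplication by $e^{-ct}$ and the Robin (resp.\ Dirichlet) heat semigroups on $L^2(\R_+)$, which are positivity preserving. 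Hence all hypotheses of \autoref{cor:startinalpha} will hold on each cell once the Nagumo condition for $F_i$ is verified.

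\emph{The Nagumo condition (the crux).} Since $\MM$ is a product and its $\R$-factor is all of $\R$, the distance $\dist_{\L^2}(u+\epsilon' F_i(u);\MM)$ splits into the $L^2(\R_+)$-distances of the first two components to $L^2_+$ and to $L^2_-$, and it suffices to verify the pointwise inward-pointing condition of \autoref{lem:pip} for each, with $V=\H^1$ and $I=\R_+$ (the second component after the reflection $h\mapsto -h$). Concretely, for $u=(u_1,u_2,u_3)\in\MM\cap\H^1$ one must show $F_{i,1}(u)(x)\geq 0$ for a.e.\ $x$ with $u_1(x)=0$ and $F_{i,2}(u)(x)\leq 0$ for a.e.\ $x$ with $u_2(x)=0$. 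At such a point the noise sum vanishes since $\sigma_1(x,0)=0$ by \autoref{a:inpoint}, the correction $\Sigma^{(r+\epsilon)}_{\infty,1}(u)(x)$ vanishes because it carries the factor $\sigma_1(x,u_1(x))=0$, the truncation factor $h_{r+\epsilon}(\norm{u}{1/2}^2)\geq 0$ is harmless, and $c\,u_1(x)=0$; what remains is the drift $\mu_1(x,0,u_1'(x))+u_1'(x)\,\varrho(\cI(u))$. The delicate point is that this still depends on the derivative $u_1'(x)$; the resolution is the classical fact that for $u_1\in H^1(\R_+)$ with $u_1\geq 0$ one has $u_1'=0$ a.e.\ on $\{u_1=0\}$, so that a.e.\ on $\{u_1=0\}$ the drift collapses to $\mu_1(x,0,0)=\mu_+(x,0,0)\geq 0$. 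The second component is symmetric, using $\mu_2(x,0,0)\leq 0$ and $\sigma_2(x,0)=0$ from \autoref{a:inpoint}. I expect this disentangling of the derivative-dependence of $\cB$ on the zero set to be the main obstacle; everything else is bookkeeping built on \autoref{sec:wza} and \autoref{sec:inv}.

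\emph{Concatenation and passage to the limit.} With the Nagumo condition in hand, \autoref{cor:startinalpha} applied cell by cell (with $\gamma=\alpha=\tfrac12$ and $\cO=E_{1/2}$) shows that on $[i\delta_m,(i+1)\delta_m)$ a mild solution starting in $\MM\cap\H^1$ stays in $\MM\cap\H^1$ and remains $\H^1$-continuous; since $X_0\in\MM\cap\H^1$, concatenating over $i=0,\dots,m-1$ yields $Z^{(r+\epsilon)}_{m,n}(t)\in\MM$ for all $t\in[0,T]$, hence $Z_{m,n}\big(t\wedge\tau^{(r+\epsilon)}_{m,n}\wedge\tau^{(r)}\big)\in\MM$ for all $t$, $\PP$-a.s. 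By \eqref{eq:wzconv1storder}, a diagonal choice $m=m_n\to\infty$ gives $Z_{m_n,n}\big(\cdot\wedge\tau^{(r+\epsilon)}_{m_n,n}\wedge\tau^{(r)}\big)\to X(\cdot\wedge\tau^{(r)})$ in $L^{2p}(\Omega;C([0,T];\H^1))$ as $n\to\infty$, and along a subsequence the convergence is $\PP$-a.s.\ uniform in $t$; as $\MM\cap\H^1$ is closed in $\H^1$ and every approximant lies in it, the limit $X(t\wedge\tau^{(r)})$ lies in $\MM\cap\H^1$ for all $t$, $\PP$-a.s. Letting $r\to\infty$ along $\N$ — so that $\tau^{(r)}\nearrow\tau$ by $\H^1$-continuity of $X$ — and recalling that $(u_1,u_2,x_*)$ is $\H^1$-valued on $\llbrak 0,\tau\llbrak$, we conclude $(u_1(t),u_2(t),x_*(t))\in\MM\cap\H^1$ on $\llbrak 0,\tau\llbrak$, which is the asserted forward invariance.
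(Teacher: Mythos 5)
Your proof is correct and follows essentially the same route as the paper: verify the point-wise inward-pointing condition of \autoref{lem:pip} for the Wong--Zakai drift cell by cell (using that the classical/weak derivative of a non-negative $H^1$ function vanishes a.e.\ on its zero set, so that the $\ddx u_i\,\varrho(\cI(u))$ terms drop out, and that the noise and Stratonovich correction vanish where $u_i=0$ by $\sigma(x,0)=0$), apply \autoref{cor:startinalpha} iteratively over the time grid to conclude invariance for the random PDEs, then pass to an a.s.\ convergent subsequence via~\eqref{eq:wzconv1storder} and let $r\to\infty$. The only cosmetic difference is that you invoke the explicitly truncated coefficients $\cB^{(r+\epsilon)}$, $\Sigma^{(r+\epsilon)}_\infty$, $\sigma^{(r+\epsilon)}_k$ from \autoref{ssec:wzalocal} before applying the deterministic invariance result, while the paper applies \autoref{cor:startinalpha} directly to $\cB-\Sigma_\infty+\sum_k\cC(\cdot)e_k\dot\beta^k_m$ and lets that corollary do the truncation internally; and you also make explicit the (implicitly used) fact that the Robin/Dirichlet heat semigroup is positivity preserving, hence $S_t\MM\subset\MM$ — a useful addition.
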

\begin{proof}
  Let $X= (u_1,u_2,x_*)$ and $Z := (w_1,w_2,y_*)$ be the unique mild solutions respectively of~\eqref{eq:SEEq} and~\eqref{eq:SEEqWZA} on $\H^1$ and write 
  \[\Phi_{m,n}(t,X) := \cB(X) - \Sigma_\infty +  \sum_{k=1}^n \cC(X)e_k \dot \beta_k^m(t).\]
  \begin{enumerate}[label={Step \Roman*.}, wide]
  \item First, we show that the nonlinearity $\Phi_{m,n}$ fulfills the Nagumo condition on $[0,\frac1mT)$.  Note that $\Phi_{m,n}$ is constant in time now and so we write 
    \[ \Phi_{m,n}(t,X) =: (\Phi_{m,n}^1(X), \Phi_{m,n}^2(X), \Phi_{m,n}^3(X))\]
    on $[0,\frac1m T)$. According to \autoref{lem:pip} it now suffices to prove that $\Phi^1_{m,n}$ satisfy the condition~\eqref{eq:pip} - note that the problem for $\Phi^2_{m,n}$ becomes the same after reflection. Let $u\in H^1(\R_+)$ with $u\geq 0$. Recall that there exists a set $G$ such that $\R_+\setminus\{G\}$ is a null set and $u$ is differentiable on $G$, see e.\,g. \cite[Theorem 5.8.5]{evans} . For all $x\in G$ with $u(x) = 0$, it thus holds by non-negativity of $u$ that also $\ddx u(x) = 0$. In particular, it holds for all $x\in G$ with $u(x) = 0$, that
    \[\mu_1(x,u(x),\ddx u(x)) + \varrho(a, b) \ddx u(x) = \mu_1(x,u(x),\ddx u(x)) \geq 0.\]
    Here, $a, b\in \R$ are arbitrary. Even more straight forward, we get that the noise and correction term vanish for all $x\in \R_+$ such that $u(x) = 0$ and thus, $\Phi^{1}_{m,n}(u)\geq 0$. As we have seen in \autoref{lem:pip}, this yields
    \[\lim_{\epsilon \to 0} \frac1{\epsilon} \dist_{L^2}(u_0 + \epsilon \Phi^1_{m,n}(u_0); L^2_+) = 0,\;\forall u_0\in L^2_+,\]
    and the respective result holds true for $\Phi^2_{m,n}$ and $L^2_-$.
  \item We apply \autoref{cor:startinalpha} iteratively on $[\frac{k}{m} T, \frac{k+1}m T)$, as long as the solution is continuable. The uniqueness and maximality claim in \autoref{thm:viabPruesslocal} shows that the unique mild solution of~\eqref{eq:SEEqWZA} stays in $\MM\cap \H^1$ up to the explosion time $\tau_{m,n}$, for all $\omega\in\Omega$, and for all $m$, $n\in\N$.
  \item By~\eqref{eq:wzconv1storder}, we find a subsequence such that the convergence holds true almost surely and thus, $X(t\wedge \tau^{(r)})\in \MM$ almost surely, for all $r>0$, $r\in \Q$. With $r\to\infty$, along the countable set $\Q$, we get that $X(t) \in \MM$ on $\llbrak 0,\tau\llbrak$. 
  \end{enumerate}
\end{proof}

\subsection{Dirichlet Boundary Conditions}
\label{ssec:proofsdirichlet}
We consider the case $\kappa_+=\kappa_+ = \infty$ in which
\[\dom(\Delta_+) = \dom(\Delta_-) = H^2(\R_+)\cap H^1_0(\R_+).\] 
In particular, we have $E=\L^2$ and $E_\alpha = \H^{2\alpha}$, for all
$\alpha<\sfrac14$, see~\cite[Lemma 4.1]{SFBPDir}. From \cite[Lemma
4.3]{SFBPDir} we get that the assumption imposed in the beginning yield Lipschitz continuity on bounded sets. 
\begin{prop}\label{prop:locLipE1Ea}
  Assume that Assumptions~\ref{a:rho},~\ref{a:zeta},~\ref{a:mu}, and \ref{a:sigma} hold true. Then, for $\alpha \in [0,\frac14)$,
  \[\cB:\dom(\cA) \to \H^{2\alpha},\qquad \text{and}\qquad \cC: \DA \to \HS(U; \DA)\]
  are Lipschitz continuous on bounded sets.
  
  In particular, for all initial data $X_0\in \DA$, there exist unique maximal mild solution $(X,\tau)$ and $(Z_{m,n}, \tau_{m,n})$ on $\DA$, resp. of~\eqref{eq:SEEq} and of~\eqref{eq:SEEqWZA}.  Moreover, $X$ and $Z_{m,n}$ have almost surely continuous paths in $\DA$ and $\tau$, $\tau_{m,n}>0$.
\end{prop}
\begin{proof}
  See \cite[Lemma 4.3 and Theorem 3.17]{SFBPDir}.
\end{proof}
The following lemma can be shown exactly in the same way as \autoref{lem:SigmaLipH1} above, but replacing $\H^1$ by $\H^2$. Note that, due to Assumption~\ref{a:sigma}.\ref{ai:sigmabc}, $\Sigma_n(u)$ fulfills Dirichlet boundary conditions at $0$ when $u$ does. 
\begin{lem}\label{lem:SigmaLipH2}
  Let Assumptions~\ref{a:zeta} and \ref{a:sigma} be satisfied. For all $N\in\N$ there exists $L_\Sigma^{(N)}>0$ such that for all $u$, $v\in \H^2$ with norm smaller than $N$ it holds that
  \[\norm{\Sigma_n(u) - \Sigma_n(v)}{\H^2} \leq L_\Sigma^{(N)} \norm{u-v}{\H^2},\quad \forall n\in\N.\]
  Moreover, $\Sigma_n$ maps $\DA$ into $\DA$.
\end{lem}

By \autoref{thm:Cdiff}, $\cC$ is of class $C^2$ and its derivatives map bounded sets into bounded sets. Hence, the definition of $\Sigma_n$ is consistent with the definition in Section~\ref{sec:wza}. 
Applying \autoref{lem:SigmaConvH1} we get $\Sigma_n(u)\rightarrow \Sigma_\infty(u)$ in $\H^1$, for all $u\in \H^1$. With the same arguments, also the second weak derivatives converge in $\L^2$. Indeed, this works iteratively by applying chain rule and the same arguments as in the proof of \autoref{lem:SigmaConvH1} above. We will not go into more details but note that, as we will show in the proof of \autoref{lem:Tc},
\[\tfrac{\d^2}{\d x} (T_\zeta e_k)^2(x) = T_{\zeta''}e_k (x)T_\zeta e_k(x) + (T_\zeta e_k)^2(x).\] 
\begin{lem}\label{lem:SigmaConvH2}
  Let Assumption~\ref{a:zeta} and \ref{a:sigma} be satisfied. Then, for all $u\in \H^2$ it holds that $\lim_{n\to\infty} \norm{\Sigma_\infty(u) - \Sigma_n(u)}{\H^2} = 0$.
\end{lem}
Recall that $\DA$ is a closed subset of $\H^2$ so that $\Sigma_\infty$ maps $\DA$ into $\DA$, since $\Sigma_n$ does for all $n\in\N$. 
Hence, we can apply the \autoref{thm:WZAlocal} to finish the proof of \autoref{thm:wzadirichlet} and obtain,
\begin{equation}
  \label{eq:wzconvdirichlet}
  \lim_{n\to \infty}\lim_{m\to \infty} \EE\sup_{0\leq t\leq T}\norm{X(t\wedge \tau^{(r)}) - Z_{m,n}(t\wedge \tau^{(r+ \epsilon)}_{m,n} \wedge \tau^{(r)})}{\H^2}^{2p} = 0.
\end{equation}
\begin{rmk}
  Technically, we have to choose $\eta\in (0,1)$, we set $\tilde E:= \dom((-\cA)^\eta)$. Then, the restriction of $\cA$ to $\tilde E$ fulfills again \autoref{a:A} and, moreover, $\tilde E_\theta = E_1$, for $\theta:= 1-\eta<1$, so that we fit into the notation of \autoref{sec:wza}; see also \autoref{prop:reiteration} and \autoref{rmk:interpol:reiteration}. 
\end{rmk}

In order to apply the forward invariance results from \autoref{sec:inv} to the approximating solutions $Z_{m,n}$, we need to assure that $\cB$, $\Sigma_k^n$ and $\sigma_k$ are also Lipschitz on bounded sets as mapping from $E_\alpha$ into $E$, for some $\alpha<1$, whereas the Nagumo conditions needs to be satisfied on $E$ itself. Here, as above, we set $E:= \L^2$ so that $E_1= \DA$, $E_\alpha \subset \H^{2\alpha}$, $\alpha\in (0,1)$ and $E_\alpha = H^{2\alpha}$ for all $\alpha \in [0,\sfrac14)$. 
\begin{lem}
  \label{lem:BLipAlpha}
  Assume that Assumptions~\ref{a:rho},~\ref{a:zeta},~\ref{a:mu}, and~\ref{a:sigma} hold true. Then, $\cB$, $\cC(.)e_k$ and $\Sigma_n$, $k\in \N$, $n\in \N$  are Lipschitz continuous on bounded sets from $E_\alpha$ into $E$, for all $\alpha >\sfrac34$. 
\end{lem}
\begin{proof}
  First, note that $\cI$ is continuous from $E_\alpha$ into $\R^2$ for
  all $\alpha >\sfrac34$ by continuity of the trace operator on
  Sobolev spaces, cf. \cite[Thm 9.4]{lionsmagenes1}. We now keep $\alpha\in (\sfrac34,1)$ fixed. Since $\varrho$ is locally Lipschitz and the gradient is Lipschitz from $H^1_0$ into $L^2$, it holds that
  \[u\mapsto \varrho(\cI(u))
  \begin{pmatrix}
    \ddx u_1\\ -\ddx u_2 \\ 1
  \end{pmatrix}\]
  is Lipschitz continuous on bounded sets from $E_\alpha$ into $E$. For $\mu:=\mu_1$ or $\mu:= \mu_2$ we now need to prove that the Nemytskii operator $N_\mu (u):= \mu(.,u(.),\ddx u(.))$ is Lipschitz from $H^{2\alpha}(\R_+)$ into $L^2(\R_+)$. Let $u$, $v\in H^{2\alpha}(\R_+)$ with $\norm{u}{H^{2\alpha}}, \norm{v}{H^{2\alpha}} \leq N_1$ for some $N_1\in \N$. By Sobolev imbeddings, we can assume that $u$, $v\in BUC^1(\R_+)$ and $\norm{u}{C^1_b}$, $\norm{v}{C^1_b}\leq N_2$ for some $N_2\geq N_1$. Denote by $L$ the Lipschitz constant of $(y,z)\mapsto \mu(x,y,z)$ on the $\R^2$-ball of radius $N_2$. Indeed, $L$ can be chosen independently of $x\in \R$ by Assumption~\ref{a:mu}. Then,
  \begin{multline}
    \label{eq:43}
    \int_0^\infty \abs{\mu(x,u(x),\ddx u(x)) - \mu(x,v(x),\ddx v(x))}^2 \d x\\
    \leq L^2 \int_0^\infty \abs{u(x) - v(x)}^2 + \abs{\ddx u(x) - \ddx v(x)}^2 \d x.
  \end{multline}
  Since $E_\alpha \subset \H^{2\alpha}(\R_+)$, this finishes the proof for $\cB$.
  
  For $\cC(.)e_k$, recall that~\autoref{a:sigma} is stronger than~\autoref{a:sigma1} and as a consequence of \autoref{prop:locLipE12}, $\cC(.)e_k$ is Lipschitz continuous on bounded sets on $E_{\sfrac12}$. The same follows from \autoref{lem:SigmaLipH1} for $\Sigma_n$. Finally, recall the imbedding relation
  \[E_{\alpha} \hookrightarrow E_{\sfrac12} \hookrightarrow E,\]
  which yields that $\cC(.)e_n$ and $\Sigma_n$ are also Lipschitz on bounded sets from $E_\alpha$ into $E$, for all $n\in\N$. 
\end{proof}

We close this section by the following theorem, which is the remaining part, (a), of \autoref{thm:fwdinv}.
\begin{thm}\label{thm:fwdinvH2}
  Let Assumption~\ref{a:rho}, \ref{a:zeta}, \ref{a:mu}, \ref{a:sigma} and in addition the pointwise inward pointing assumption~\ref{a:inpoint} hold true. Then, the set $\MM\cap \DA$ is forward invariant for the stochastic evolution equation~\eqref{eq:SEEq}.
\end{thm}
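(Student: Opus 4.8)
The plan is to follow the three-step structure of the proof of \autoref{thm:fwdinvH1}, now carried out on $\DA=E_1$ in place of $\H^1$, invoking the Dirichlet counterparts of the auxiliary results (\autoref{lem:SigmaLipH2}, \autoref{lem:SigmaConvH2}, \autoref{lem:BLipAlpha}) together with the Wong--Zakai convergence~\eqref{eq:wzconvdirichlet}. Throughout, fix $m,n\in\N$ and let $\Phi_{m,n}(t,X)=\cB(X)-\Sigma_\infty(X)+\sum_{k=1}^n\cC(X)e_k\,\dot\beta^k_m(t)$ be, exactly as in the proof of \autoref{thm:fwdinvH1}, the nonlinearity of~\eqref{eq:SEEqWZA}, which is constant in time on each grid interval $[\tfrac{k}{m}T,\tfrac{k+1}{m}T)$; denote its three components by $\Phi_{m,n}^1,\Phi_{m,n}^2,\Phi_{m,n}^3$. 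Fix an exponent $\alpha\in(\tfrac34,1)$. By \autoref{lem:BLipAlpha} the maps $\cB$, $\cC(\cdot)e_k$, $\Sigma_n$ are Lipschitz continuous on bounded sets from $E_\alpha$ into $E=\L^2$, and $\Sigma_\infty$ inherits this property from the uniform-in-$n$ Lipschitz bounds of \autoref{lem:SigmaLipH2} together with the convergence \autoref{lem:SigmaConvH2}; hence $\Phi_{m,n}\colon E_\alpha\to\L^2$ is Lipschitz on bounded sets, i.e.\ we may take $\cO=E_\alpha$ in \autoref{cor:startinalpha}. Since the Dirichlet heat semigroup is positivity preserving we have $S_t(\MM)\subset\MM$, and $\MM\cap\DA$ is closed in $E_\alpha$ because $E_\alpha\hookrightarrow\L^2$ and $\MM$ is $\L^2$-closed; so all the hypotheses of \autoref{cor:startinalpha} hold for~\eqref{eq:SEEqWZA} on each grid interval, once the Nagumo condition is verified.

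\emph{Step I.} The goal is to check that $\Phi_{m,n}$ satisfies the Nagumo condition~\eqref{eq:nagumo} on $\MM_\alpha$ with respect to $E=\L^2$. By \autoref{lem:pip} this reduces to the pointwise inward-pointing property~\eqref{eq:pip} for $\Phi^1_{m,n}$ on $L^2_+$; the assertion for $\Phi^2_{m,n}$ on $L^2_-$ follows via the reflection $x\mapsto-x$, and the $\R$-valued component $\Phi^3_{m,n}$ carries no constraint. The crucial point is the Sobolev embedding $E_\alpha\hookrightarrow H^{2\alpha}(\R_+)\hookrightarrow BUC^1(\R_+)$ available for $\alpha>\tfrac34$: if $u\ge0$ and $u(x)=0$ at some interior point $x>0$, then $x$ is a minimum of a $C^1$ function, so $\ddx u(x)=0$; consequently $\mu_1(x,u(x),\ddx u(x))=\mu_1(x,0,0)\ge0$ by \autoref{a:inpoint}, the transport term $\varrho(\cdot)\ddx u(x)$ vanishes, and both $\sigma_1(x,u(x))\,T_\zeta e_k(\cdot)$ and $\ddy\sigma_1(x,u(x))\,\sigma_1(x,u(x))\,\norm{\zeta(\cdot,.)}{L^2}^2$ vanish because $\sigma_1(x,0)=0$ (again \autoref{a:inpoint}). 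The boundary point $x=0$ forms a $\d x$-null set and may be discarded, so $\Phi^1_{m,n}(u)(x)\ge0$ for $\d x$-a.e.\ $x$ with $u(x)=0$, which is precisely~\eqref{eq:pip}, and \autoref{lem:pip} then yields~\eqref{eq:nagumo}.

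\emph{Steps II and III.} Applying \autoref{cor:startinalpha} successively on the intervals $[\tfrac{k}{m}T,\tfrac{k+1}{m}T)$, where $\Phi_{m,n}$ is time-homogeneous, gives for each initial datum $X_0\in\MM\cap\DA\subset\MM_\alpha$ a unique maximal mild solution in $E_\alpha$ that stays in $\MM$; by uniqueness of the $\DA$-valued solution from \autoref{prop:locLipE1Ea} and the embedding $\DA=E_1\hookrightarrow E_\alpha$ this solution coincides with $Z_{m,n}$, so $Z_{m,n}(t)\in\MM$ on $\llbrak 0,\tau_{m,n}\llbrak$ for all $m,n$. To pass to $X$, fix a rational $r>0$ and $\epsilon>0$; by~\eqref{eq:wzconvdirichlet} one can choose, for each $n$, an index $m_n$ so that
\[ \E{\sup_{0\le t\le T}\norm{X(t\wedge\tau^{(r)})-Z_{m_n,n}(t\wedge\tau^{(r+\epsilon)}_{m_n,n}\wedge\tau^{(r)})}{\H^2}^{2p}}\longrightarrow 0\qquad(n\to\infty), \]
hence a subsequence converges almost surely uniformly on $[0,T]$. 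Since each approximant takes values in the $\L^2$-closed set $\MM$ and $\H^2\hookrightarrow\L^2$, the limit satisfies $X(t\wedge\tau^{(r)})\in\MM$ for all $t$, almost surely. Intersecting the corresponding null sets over all rational $r>0$ and letting $r\to\infty$, so that $\tau^{(r)}\nearrow\tau$, gives $X(t)\in\MM$ on $\llbrak 0,\tau\llbrak$, which is the assertion.

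The step I expect to require the most care is the bookkeeping of spaces rather than any new estimate: the invariance machinery of \autoref{sec:inv} must be run with base space $\L^2$ and exponent $\alpha\in(\tfrac34,1)$, so that the Nagumo condition is tested in $\L^2$ (where \autoref{lem:pip} lives) while the Lipschitz-on-bounded-sets regularity of $\Phi_{m,n}$ from $E_\alpha$ into $\L^2$ is the one supplied by \autoref{lem:BLipAlpha}; at the same time the solution $Z_{m,n}$ and the convergence~\eqref{eq:wzconvdirichlet} are those established on $\DA$ in \autoref{ssec:proofsdirichlet}. Reconciling these two viewpoints --- i.e.\ that the $E_\alpha$-solution furnished by \autoref{cor:startinalpha} is the same process as the $\DA$-solution $Z_{m,n}$ --- is the one place that needs a word, and it follows from uniqueness of mild solutions in $E_\alpha$ combined with $\DA=E_1\hookrightarrow E_\alpha$.
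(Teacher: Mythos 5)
Your proposal follows exactly the three-step scheme of the paper's proof: verify the Nagumo condition via \autoref{lem:pip} and \autoref{a:inpoint}, apply the deterministic invariance machinery with $\alpha\in(\tfrac34,1)$ and \autoref{lem:BLipAlpha} on the approximating equations, identify the $E_\alpha$-solution with $Z_{m,n}$ via uniqueness and $E_1\hookrightarrow E_\alpha$, and pass to $X$ along a.s.-convergent subsequences from~\eqref{eq:wzconvdirichlet}. The only cosmetic differences are that you invoke \autoref{cor:startinalpha} where the paper uses \autoref{thm:viabPruesslocal} directly (the initial data already lies in $\DA\subset E_\gamma$, $\gamma\in(\alpha,1)$, so the corollary's extra generality is not needed), and you spell out the Lipschitz continuity of $\Sigma_\infty$ on $E_\alpha\to\L^2$ which the paper leaves implicit; both points are correct and inessential.
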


\begin{proof}
  Similar to the proof \autoref{thm:fwdinvH1}, let $X= (u_1,u_2,x_*)$ and $Z_{m,n}$ be the unique mild solutions respectively of~\eqref{eq:SEEq} and~\eqref{eq:SEEqWZA} on $\DA$ and write 
  \[\Phi_{m,n}(t,X) := \cB(X) - \Sigma_\infty +  \sum_{k=1}^n \cC(X)e_k \dot \beta_k^m(t).\]  
  In the same way as in Step I of the proof of \autoref{thm:fwdinvH1}, we get that $\Phi_{m,n}$ fulfills the Nagumo condition on $\L^2$.
  We consider $\Phi_{m,n}$, restricted to each interval $[\frac{k}{m}T, \frac{k+1}{m}T)$, $k<m$, as a map from $E_\alpha$ to $E$, for $\alpha \in (\sfrac34,1)$. Its Lipschitz continuity on bounded is covered by \autoref{lem:BLipAlpha}. Since $(-\cA, \DA)$ is positive self-adjoint, the assumptions of \autoref{thm:viabPruesslocal} yield existence of a unique maximal mild solution $\tilde Z_{m,n}$ on $E_\alpha$ up the the explosion time $\tilde \tau_{m,n}$, taking values in $\MM\cap E_{\alpha}$. When $\tilde\tau_{m,n} > \frac1m T$, we construct the solution on $[\frac1m T,\frac2m T)$ and concatenate the solutions. We iterate this argument as long as we find $k< m$ with $\tilde \tau_{m,n} < \frac{k}{m}T$ or $\tilde \tau_{m,n} = T$. Recall that this works $\omega$-wise. 
  
  On the other hand, by \autoref{prop:locLipE1Ea}, $\Phi_{m,n}$ is also Lipschitz continuous on bounded sets from $E_1$ into $E_{\alpha'}$, for any $\alpha'<\sfrac14$ and thus, there exists a unique maximal mild solution $Z_{m,n}$ of~\eqref{eq:SEEqWZA} on $E_1$, with explosion time $\tau_{m,n}$. By the continuous imbedding $E_1\hookrightarrow E_\alpha$, $Z_{m,n}$ is also a mild solution on $E_\alpha$ and thus, the uniqueness and maximality claim yields $\tilde \tau_{m,n} \geq \tau_{m,n}$ and $Z_{m,n} = \tilde Z_{m,n} \in \MM\cap E_1$ on $[0,\tau_{m,n})$ for all $\omega\in \Omega$. Hence, the set $\MM\cap E_1$ is forward invariant for~\eqref{eq:SEEqWZA}. By switching to subsequences, the convergence~\eqref{eq:wzconvdirichlet} implies $X(t)\in \MM$ on $\llbrak 0,\tau \llbrak$. 
\end{proof}

%%% Local Variables:
%%% mode: latex
%%% TeX-master: "0paper"
%%% End:

%%% Appendix

\begin{appendix}
  \section[Nemytskii Operators]{Nemytskii Operators on Sobolev Spaces}
  \label{A:nem}
  We continue with the analysis on the Sobolev spaces $H^k(\R_+)$, $k\in \N$. In this section we prove some regularity results on the nonlinear Nemytskii operator
\[ u \mapsto N(u)(x) = \mu(x,u(x)),\]
where, $\mu :\R_+ \times \R^d \rightarrow \R$ and $x\in \R_+$. Note that these operators are well-understood but most of the literature focuses on bounded domains, see e.g. \cite{appell, valentPaper, valentBook}. However, in the case of unbounded domains several additional conditions on $\mu$ are necessary to make them work. First, we state a result on the spaces $H^k$ which guarantees, that under certain assumptions on $\mu$, $N$ will map $H^k$ into $H^k$. For a proof we refer to \cite[Theorem 1]{valentPaper}, of which it is a special case.
\begin{lem}\label{lem:mult}
  For each integer $k\geq 1$ the space $H^k(\R_+)$ is a Banach algebra. In particular, there exists a constant $c$ such that for all $u$, $v\in H^k(\R_+)$ it holds that $uv\in H^k(\R_+)$ and
  \[ \norm{uv}{H^k} \leq c \norm{u}{H^k}\norm{v}{H^k}.\]
\end{lem}

\subsection{Continuity}
We now adapt the proof of the continuity result \cite[Theorem 2]{valentPaper} to our setting, but with some corrections in the proof. For notational reasons we also introduce the Nemytskii operators
\begin{equation}
  \label{eq:defNxNy}
  N_x(u)(x) := \left(\ddx \mu\right)(x,u(x)),\quad N_{y_j}(u)(x):=\left(\tfrac{\partial}{\partial y_j} \mu\right)(x,u(x)),\;j=1,...,d,
\end{equation}
for $\quad u\in H^k(\R_+; \R^d),\;x\in \R_+$.
In order for $N$ to map $H^k$ into $H^k$ again, we need certain growth restrictions, which is not the case on bounded domains. For a multiindex $\alpha$ we denote by $D^\alpha$ the respective partial derivative operator. 
\begin{ass}\label{a:mugrowthsD} Assume $\mu\in C^m(\R_{\geq 0}\times \R^d, \R)$ and
  \begin{enumerate}[label=(\alph*)]
  \item \label{hypi:mugrowthsDx} For each integer $l$, $0\leq l\leq m$ there exists an $ a_l\in L^2(\R_+)$ and some $ b_l:\R^d\rightarrow \R_+$ locally bounded, such that 
    \[\abs{D^{(l,0,...,0)} \mu(x,y)} \leq  b_l(y)\left( a_l(x) +  \abs{y}\right),\,\forall\,x\in \R_+,\,y\in \R^d\] 
  \item \label{hypi:mugrowthsDy} For each multiindex $\alpha$ with $\alpha_1 < \abs{\alpha} \leq m$, the family of functions $(D^\alpha \mu(x,.))_{x\in \R_{\geq 0}}$ is equicontinuous and $\sup_{x\in \R_+} \abs{D^\alpha \mu(x,.)}$ is locally bounded.
  \end{enumerate}
\end{ass}
\begin{ass}\label{a:mulip}
  Assume that $\mu\in C^{m}(\R_{\geq 0} \times \R^d, \R)$ and $D^{\alpha}\mu(x,.)$ is locally Lipschitz for all multi-indices $\alpha$, $\abs{\alpha}\leq m$ with Lipschitz constants uniform in $x\in \R_{\geq 0}$, i.\,e. we assume that for all $r\geq 0$ there exists $L_r\geq 0$ such that   \begin{equation*}
    \abs{D^{\alpha}\mu(x,y) - D^{\alpha}\mu(x,z)} \leq L_r \abs{y-z}.
  \end{equation*}
  holds for all $x$, $y$, $z\in \R^d$ with $\abs{y}$, $\abs{z}\leq r$ and $\alpha$, $\abs{\alpha}\leq m$
\end{ass}
\begin{rmk}
  If $\mu$ satisfies \autoref{a:mugrowthsD} for some integer $m\geq 1$, then $\mu$ satisfies \autoref{a:mulip} for $m-1$.
\end{rmk}
\begin{rmk}
  Recall the Sobolev imbeddings
  \[H^{m+1}(\R_+) \hookrightarrow BUC^m(\R_+).\]
  As usual $BUC^m(\R_+)$ is equipped with the $C^m_b$-norm. In the following, we will work with the $BUC^m$ representative of the elements in $H^{m+1}$ without further comment.
\end{rmk}
Note that \autoref{a:mugrowthsD} is stronger than \cite[Assumption 6.2]{SFBPDir} so that we get the following two results from~\cite[Appendix 1]{SFBPDir}.
\begin{thm}\label{thm:Ncont}
  If \autoref{a:mugrowthsD} holds for some integer $m\geq 1$, then the operator $N$ is continuous from $(H^m(\R_+))^d$ into $H^m(\R_+)$.
\end{thm}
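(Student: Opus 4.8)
The plan is to differentiate $N(u)(x)=\mu(x,u(x))$ up to order $m$ by the generalized chain rule and estimate each resulting term directly; equivalently one may argue by induction on $m$ from the identity $\partial_x\big(\mu(x,u(x))\big)=N_x(u)(x)+\sum_{j=1}^d N_{y_j}(u)(x)\,\partial_x u_j(x)$ (notation~\eqref{eq:defNxNy}), noting that $\mu_x$ again satisfies \autoref{a:mugrowthsD} with $m$ lowered by one and that each $\mu_{y_j}$ satisfies it with the $L^2$-part $a_l$ of part~\ref{hypi:mugrowthsDx} taken to be $0$ (its $x$-derivatives being controlled uniformly in $x$ through part~\ref{hypi:mugrowthsDy}). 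The structural inputs are the Banach-algebra property of $H^k(\R_+)$ from \autoref{lem:mult} and the Sobolev embedding $H^m(\R_+)\hookrightarrow BUC^{m-1}(\R_+)$; the latter makes $u\in(H^m(\R_+))^d$ have bounded, hence relatively compactly valued, components $u,\partial_x u,\dots,\partial_x^{m-1}u$, while only $\partial_x^m u$ is merely $L^2$. As noted in the text, one could also simply quote~\cite[Appendix~1]{SFBPDir}, since \autoref{a:mugrowthsD} is strictly stronger than the hypotheses used there.

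\textbf{Mapping property.} Expanding $\partial_x^j\big(\mu(x,u(x))\big)$ by the Fa\`a di Bruno formula produces a finite sum of terms
\[
  \big(\partial_x^\ell\partial_y^\gamma\mu\big)(x,u(x))\;\prod_{i=1}^{|\gamma|}\partial_x^{j_i}u_{k_i}(x),\qquad \ell+\sum_i j_i=j\leq m,\quad j_i\geq 1,
\]
where $|\gamma|$ is the number of $u$-factors. If $\gamma=0$ the term is $\partial_x^j\mu(x,u(x))$, whose modulus is bounded by $b_j(u(x))\big(a_j(x)+|u(x)|\big)$; since $u$ is bounded and $a_j\in L^2(\R_+)$ this lies in $L^2(\R_+)$ by part~\ref{hypi:mugrowthsDx}. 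If $|\gamma|\geq 1$ then $\ell<\ell+|\gamma|\leq m$, so part~\ref{hypi:mugrowthsDy} makes $(\partial_x^\ell\partial_y^\gamma\mu)(x,u(x))$ bounded uniformly in $x$ (the closure of the range of $u$ being compact); and $\prod_i\partial_x^{j_i}u_{k_i}\in L^2(\R_+)$, because at most one index $j_i$ can equal $m$ and then it is the only factor, which lies in $L^2$, whereas in every other case each factor lies in $H^{m-j_i}(\R_+)\hookrightarrow H^1(\R_+)\hookrightarrow L^\infty\cap L^2$, so one estimates a single factor in $L^2$ and the rest in $L^\infty$. Hence every term is in $L^2(\R_+)$ and $N(u)\in H^m(\R_+)$, with $\norm{N(u)}{H^m}$ bounded by a function of $\norm{u}{(H^m)^d}$.

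\textbf{Continuity.} Let $u^{(n)}\to u$ in $(H^m(\R_+))^d$; then $u^{(n)}\to u$ in $BUC^{m-1}(\R_+)$ (so $u,\dots,\partial_x^{m-1}u$ converge uniformly) and $\partial_x^m u^{(n)}\to\partial_x^m u$ in $L^2$. By the subsequence criterion it suffices to treat a subsequence; passing to one, we may additionally assume $|u^{(n)}|\leq g$ pointwise for some fixed $g\in L^2(\R_+)$. Write $\partial_x^j N(u^{(n)})-\partial_x^j N(u)$ through the same Fa\`a di Bruno expansion and, in each term, telescope the difference of products. For $|\gamma|\geq 1$ the coefficient difference $(\partial_x^\ell\partial_y^\gamma\mu)(\cdot,u^{(n)})-(\partial_x^\ell\partial_y^\gamma\mu)(\cdot,u)$ tends to $0$ in $L^\infty$ by the equicontinuity in part~\ref{hypi:mugrowthsDy} together with the uniform convergence of $u^{(n)}$, and the coefficient is uniformly bounded; the product of derivative-factors converges in $L^2$ because the distinguished $L^2$-factor converges in $L^2$ while the remaining factors converge in $L^\infty$ by the Sobolev embedding. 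For $\gamma=0$ the term $\partial_x^j\mu(\cdot,u^{(n)})$ converges to $\partial_x^j\mu(\cdot,u)$ in $L^2$ by dominated convergence, the dominating function $C(a_j+g)\in L^2$ coming from part~\ref{hypi:mugrowthsDx}. Consequently each term converges in $L^2(\R_+)$, so $\partial_x^j N(u^{(n)})\to\partial_x^j N(u)$ in $L^2$ for all $j\leq m$, i.e.\ $N(u^{(n)})\to N(u)$ in $H^m(\R_+)$.

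\textbf{Main obstacle.} The delicate point is the top-order term, in which $\partial_x^m u$ lies only in $L^2$: there the coefficient $\partial_{y}\mu(x,u(x))$ must be controlled in $L^\infty$ uniformly in $x$ and must depend on $u$ continuously in the sup-norm. This is exactly what part~\ref{hypi:mugrowthsDy} (uniform-in-$x$ local boundedness and equicontinuity of the mixed derivatives) supplies and what the $L^2$-growth bound~\ref{hypi:mugrowthsDx} alone cannot. The remaining work — the combinatorial bookkeeping of the generalized chain rule so that each product keeps exactly one $L^2$ factor, and passing to a subsequence to upgrade $H^m$-convergence to the uniform and $L^2$-dominated convergence needed for equicontinuity and dominated convergence — is routine.
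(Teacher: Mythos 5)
Your proof is correct, and it proves something that the paper itself does not prove but only cites: the text preceding \autoref{thm:Ncont} dispatches it with the single sentence that \autoref{a:mugrowthsD} is stronger than \cite[Assumption~6.2]{SFBPDir} and that the result therefore follows from \cite[Appendix~1]{SFBPDir}. So there is no in-paper argument to compare against; your Fa\`a di Bruno expansion, with the split into the $\gamma=0$ term (controlled in $L^2$ by part~\ref{hypi:mugrowthsDx} and dominated convergence along a pointwise-dominated subsequence) and the $|\gamma|\geq 1$ terms (controlled by the uniform-in-$x$ equicontinuity of part~\ref{hypi:mugrowthsDy} together with the Banach-algebra and Sobolev structure so that at most one factor is merely $L^2$), is exactly the kind of direct verification the reference would contain, and it is carried out correctly. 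The one remark that is not quite right is the parenthetical in your \emph{Plan} paragraph: $\mu_{y_j}$ does \emph{not} satisfy part~\ref{hypi:mugrowthsDx} with $a_l=0$, since the uniform bound $\sup_x|D^{(l,e_j)}\mu(x,y)|\leq \tilde b_l(y)$ supplied by part~\ref{hypi:mugrowthsDy} generally does not vanish at $y=0$ and so cannot be absorbed into $b_l(y)\,|y|$. This is precisely why the paper, when it does spell out an argument (\autoref{thm:NcontC}), sends $N_{y_j}$ into $C^{m-1}_b$ rather than $H^{m-1}$ and invokes the multiplication $C^{k}_b\times H^k\to H^k$ of \autoref{lem:HtimesC}, instead of an induction in which $\mu_{y_j}$ is fed back through \autoref{a:mugrowthsD}. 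Since your actual proof (Mapping property and Continuity paragraphs) never uses that parenthetical and proceeds by the direct expansion, this imprecision does not affect its validity.
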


\begin{thm}\label{thm:Nlip}
  Let $\mu$ satisfy Assumptions~\ref{a:mugrowthsD} and~\ref{a:mulip} for some positive integer $m$. Then, $N$ is Lipschitz continuous from bounded subsets of $(H^m(\R_+))^d$ into $H^m(\R_+)$.
\end{thm}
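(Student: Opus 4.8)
The plan is to prove the statement by induction on $m$; alternatively one may simply note that Assumptions~\ref{a:mugrowthsD} and~\ref{a:mulip} are at least as strong as the hypotheses under which the analogous Lipschitz estimate is established in \cite[Appendix~1]{SFBPDir}, so that the theorem is a special case of that result. For a self-contained argument the backbone is the chain rule
\[
  \ddx N(u) \;=\; N_x(u) + \sum_{j=1}^d N_{y_j}(u)\,\ddx u_j ,
\]
which holds for $u\in (H^m(\R_+))^d$ by approximation with smooth functions together with the continuity statement \autoref{thm:Ncont} (applied to $\mu$ and to its first-order partials) and the Banach-algebra property of \autoref{lem:mult}. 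Iterating this identity, the weak derivative $\tfrac{d^k}{dx^k}N(u)$, $0\le k\le m$, is a finite Fa\`a-di-Bruno sum of terms $(D^\alpha\mu)(x,u(x))\,\prod_r u_{j_r}^{(p_r)}(x)$ in which the number of derivative factors is $\abs{\alpha}-\alpha_1$ and $\alpha_1+\sum_r p_r=k$; hence every factor $u^{(p)}$ has order $p\le m$, with $p=m$ occurring only in the single term coming from a first-order $y$-derivative at $k=m$, while all other factors have order $\le m-1$ and therefore lie in $H^1(\R_+)\hookrightarrow L^\infty(\R_+)$.

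The base case $m=1$ is direct: $\norm{N(u)-N(v)}{L^2}\le L_r\norm{u-v}{L^2}$ by \autoref{a:mulip} with $\alpha=0$, while for the first derivative one writes $\ddx(N(u)-N(v))=(N_x(u)-N_x(v))+\sum_j(N_{y_j}(u)\,\ddx u_j-N_{y_j}(v)\,\ddx v_j)$, estimates $N_x(u)-N_x(v)$ in $L^2$ by \autoref{a:mulip} with $\abs{\alpha}=1$ (its $L^2$-membership being ensured by the growth bound \autoref{a:mugrowthsD}.\ref{hypi:mugrowthsDx} and $u\in L^2$), and handles each product by adding and subtracting $N_{y_j}(u)\,\ddx v_j$: the coefficient $N_{y_j}(u)$ is bounded in $L^\infty$ by the uniform-in-$x$ local boundedness in \autoref{a:mugrowthsD}.\ref{hypi:mugrowthsDy} and Lipschitz into $L^\infty$ by \autoref{a:mulip}, while $\ddx u,\ddx v$ are controlled in $L^\infty$ and $L^2$ via $H^1(\R_+)\hookrightarrow L^\infty(\R_+)$. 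For the general reduction one proceeds term by term in the Fa\`a-di-Bruno sum with the same telescoping: a pure-$x$ coefficient $\partial_x^k\mu(\cdot,u)$ lies in $L^2$ (growth bound plus $u\in L^2$) and carries no extra factor, whereas a coefficient $(D^\alpha\mu)(\cdot,u)$ with $\abs{\alpha}>\alpha_1$ lies in $L^\infty$, is Lipschitz into $L^\infty$, and multiplies a product of $\abs{\alpha}-\alpha_1$ derivative factors of which at most one is merely $L^2$ and the rest lie in $L^\infty\cap L^p$.

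For the inductive step, $\partial_x\mu$ satisfies Assumptions~\ref{a:mugrowthsD} and~\ref{a:mulip} with $m$ replaced by $m-1$, so the induction hypothesis yields that $N_x\colon (H^{m-1}(\R_+))^d\to H^{m-1}(\R_+)$ is Lipschitz on bounded sets; since $u\mapsto\ddx u_j$ is bounded linear $(H^m)^d\to H^{m-1}$ and $H^{m-1}(\R_+)$ is a Banach algebra (\autoref{lem:mult}), it remains only to control the coefficients $N_{y_j}(u)$ in the products $N_{y_j}(u)\,\ddx u_j$. The genuine difficulty here — and the point where the chain-rule induction does not run on autopilot — is that $\partial_{y_j}\mu$ need \emph{not} inherit the growth bound \autoref{a:mugrowthsD}.\ref{hypi:mugrowthsDx}: a map that is only locally bounded in $y$ uniformly in $x$ (which is all \autoref{a:mugrowthsD}.\ref{hypi:mugrowthsDy} gives for $\partial_{y_j}\mu$) need not be dominated by $b(y)\big(a(x)+\abs{y}\big)$ with $a\in L^2$, so the induction hypothesis cannot be applied to $\partial_{y_j}\mu$. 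Instead one observes that in the Fa\`a-di-Bruno expansion of $\tfrac{d^l}{dx^l}N_{y_j}(u)$ for $l\le m-1$ every occurring partial derivative $D^\alpha\mu$ has $\alpha_1<\abs{\alpha}\le m$, hence $\sup_x\abs{D^\alpha\mu(x,\cdot)}$ is locally bounded and $D^\alpha\mu(x,\cdot)$ is locally Lipschitz uniformly in $x$ by \autoref{a:mugrowthsD}.\ref{hypi:mugrowthsDy} and \autoref{a:mulip}; combined with $H^s(\R_+)\hookrightarrow L^\infty(\R_+)$ for $s\ge1$ and a Leibniz/telescoping argument, this shows that $N_{y_j}$ maps bounded subsets of $(H^m(\R_+))^d$ into bounded subsets of $W^{m-1,\infty}(\R_+)$ and is Lipschitz from bounded subsets of $(H^m)^d$ into $W^{m-1,\infty}(\R_+)$. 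Since $W^{m-1,\infty}(\R_+)$ acts by multiplication on $H^{m-1}(\R_+)$, adding and subtracting $N_{y_j}(u)\,\ddx v_j$ gives $\norm{N_{y_j}(u)\,\ddx u_j-N_{y_j}(v)\,\ddx v_j}{H^{m-1}}\le C\norm{u-v}{H^m}$ on bounded sets, and together with $\norm{N(u)-N(v)}{L^2}\le L_r\norm{u-v}{L^2}$ and the identity $\norm{f}{H^m}^2=\norm{f}{L^2}^2+\norm{\ddx f}{H^{m-1}}^2$ this closes the induction.

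The main obstacle is therefore exactly the treatment of the coefficient maps $N_{y_j}$: in contrast to $N_x$, which merely loses one Sobolev order, $N_{y_j}$ must be realised as a $W^{m-1,\infty}$-valued (hence multiplier-valued) map, and its Lipschitz estimate genuinely requires \autoref{a:mulip} for \emph{all} multi-indices of order up to $m$, not only up to $m-1$. Once this is in place and the Fa\`a-di-Bruno bookkeeping is carried out — only the top-order factor $u^{(m)}$ is $L^2$ rather than $L^\infty$, and pure-$x$ coefficients are placed in $L^2$ by \autoref{a:mugrowthsD}.\ref{hypi:mugrowthsDx} — everything that remains is a routine application of H\"older's inequality, the one-dimensional Sobolev embeddings, and \autoref{lem:mult}.
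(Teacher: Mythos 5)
Your citation route matches the paper exactly: the paper offers no proof of this statement beyond the remark that \autoref{a:mugrowthsD} is stronger than the hypotheses of \cite[Assumption~6.2]{SFBPDir} and that the result therefore follows from \cite[Appendix~1]{SFBPDir}. Your self-contained induction is a correct expansion that the paper does not include, and it correctly isolates the one genuine subtlety: since $\partial_{y_j}\mu$ only satisfies \autoref{a:mugrowthsD}.\ref{hypi:mugrowthsDy} (uniform-in-$x$ local boundedness and, via \autoref{a:mulip}, local Lipschitz continuity) and not the $L^2$ growth bound \autoref{a:mugrowthsD}.\ref{hypi:mugrowthsDx}, the coefficient operator $N_{y_j}$ cannot be placed in $H^{m-1}(\R_+)$ and must instead be realised as a bounded, locally Lipschitz map into $W^{m-1,\infty}(\R_+)$ acting by multiplication on $H^{m-1}(\R_+)$ — in contrast with $N_x$, to which the induction hypothesis does apply. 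Your bookkeeping of the Fa\`a di Bruno sum (the single top-order factor $u^{(m)}$ placed in $L^2$ against an $L^\infty$ coefficient, all other factors in $L^\infty$, pure-$x$ coefficients placed in $L^2$ by the growth bound, and the full strength of \autoref{a:mulip} up to order $m$ needed for the $L^\infty$-Lipschitz estimate on $N_{y_j}$) is accurate; together with the base case $m=1$ and the Banach-algebra property of \autoref{lem:mult}, the induction closes.
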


%%% Differentiability
\subsection{Differentiability}
\label{ssec:nemdiff}
We now discuss differentiability of $N$ in Fr\'echet sense. Here we run into the following problem compared with the literature. To get continuity of the Fr\'echet derivatives, Valent~\cite{valentBook} uses that $H^m$ is a Banach algebra and the Nemytskii operator corresponding to $(\tfrac{\partial}{\partial y_j} \mu)$ maps into $H^m$. On unbounded domains, this would exclude the linear case $\mu(x,y):= y$ which is of particular interest for applications in this work. We resolve this problem in \autoref{lem:Nuniform}.
Note that multiplication is not only bilinear continuous on $H^k$, but also from $C^k_b\times H^k$ into $H^k$. More precisely, see \autoref{lem:HtimesC}, for all $k\geq 0$ there exists $c>0$ such that for all $g\in C^k_b(\R_+)$, $u\in H^k(\R_+)$ it holds that
\begin{equation}
  \label{eq:multCmHm}
  \norm{gu}{H^k} \leq c \norm{g}{C^k_b} \norm{u}{H^k}.
\end{equation}
We start with a result on continuity of the nonlinear operators, adapting \cite[Theorem 2]{valentPaper}. We now write shortly $H^m(\R_+)^d$ for $H^m(\R_+;\R^d)$. 
\begin{thm}\label{thm:NcontC}
  If \autoref{a:mugrowthsD}.\ref{hypi:mugrowthsDy} holds for some integer $m\geq 1$, then the operator $N_{y_j}$ is continuous from $H^m(\R_+)^d$ into $C^{m-1}_b(\R_+)$ and maps bounded sets into bounded sets. 
\end{thm}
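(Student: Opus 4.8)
The plan is to combine the Sobolev imbedding $H^m(\R_+)\hookrightarrow BUC^{m-1}(\R_+)$ with the Fa\`a di Bruno formula for the $x$-derivatives of the composite map $x\mapsto (\partial_{y_j}\mu)(x,u(x))$, and then read off both boundedness and continuity directly from \autoref{a:mugrowthsD}.\ref{hypi:mugrowthsDy}. This follows the scheme of \cite[Theorem~2]{valentPaper}, adapted to the unbounded domain and to the fact that only the ``$y$-type'' information (equicontinuity and local boundedness of $D^\alpha\mu(x,\cdot)$ for $\alpha_1<|\alpha|$) is available here.

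First I would fix notation: by the imbedding recalled above, any $u\in H^m(\R_+)^d$ lies in $BUC^{m-1}(\R_+)^d$ with $\norm{u}{C^{m-1}_b}\le c\,\norm{u}{H^m}$, so every component $u_i$ and every derivative $u_i^{(k)}$ with $0\le k\le m-1$ is bounded and continuous. For $0\le \ell\le m-1$ the chain rule gives a finite expansion
\[
  \dd{x}{\ell}\bigl[(\partial_{y_j}\mu)(x,u(x))\bigr] = \sum c\,(D^{\tilde\alpha}\mu)(x,u(x))\prod_{i,k}\bigl(u_i^{(k)}(x)\bigr)^{m_{i,k}},
\]
the sum running over finitely many multiindices $\tilde\alpha=(\tilde\alpha_1,\dots,\tilde\alpha_{d+1})$ (first slot the $x$-order, the remaining $d$ slots the $y$-orders) and exponent arrays $(m_{i,k})_{i\le d,\,k\ge 1}$ subject to $\tilde\alpha_{i+1}=\delta_{ij}+\sum_k m_{i,k}$ and $\tilde\alpha_1+\sum_{i,k}k\,m_{i,k}=\ell$. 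The decisive bookkeeping point is that every $\tilde\alpha$ occurring here satisfies $\tilde\alpha_1<|\tilde\alpha|\le m$: one has $|\tilde\alpha|\le \ell+1\le m$ because at most $\ell$ differentiations act on $\mu$ beyond the initial $\partial_{y_j}$, and $|\tilde\alpha|-\tilde\alpha_1\ge 1$ always, since the $\partial_{y_j}$-slot contributes at least $1$ to the $y$-order; moreover no factor $u_i^{(k)}$ of order $k>\ell\ge m-1$ can appear. Hence each $D^{\tilde\alpha}\mu$ is covered by \autoref{a:mugrowthsD}.\ref{hypi:mugrowthsDy}, and the ``$x$-type'' bound of \ref{hypi:mugrowthsDx} is never needed.

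For the bounded-sets statement (which also yields $N_{y_j}(u)\in C^{m-1}_b(\R_+)$), suppose $\norm{u}{H^m}\le N$; then $\abs{u(x)}\le cN=:R$ for all $x\ge 0$, so in each summand $\abs{(D^{\tilde\alpha}\mu)(x,u(x))}\le \sup_{x\ge 0}\sup_{\abs{y}\le R}\abs{D^{\tilde\alpha}\mu(x,y)}<\infty$ by local boundedness, while $\abs{u_i^{(k)}(x)}\le R$. Multiplying the finitely many bounds shows $\norm{\dd{x}{\ell}N_{y_j}(u)}{\infty}$ is bounded by a constant depending only on $N$, $\mu$, $m$ and $d$, for all $\ell\le m-1$. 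Continuity in $x$ of each summand follows from $\mu\in C^m$ (so $D^{\tilde\alpha}\mu$ is jointly continuous) together with continuity of the $u_i^{(k)}$; hence $N_{y_j}(u)\in C^{m-1}_b(\R_+)$ and bounded subsets of $H^m(\R_+)^d$ are mapped into bounded subsets of $C^{m-1}_b(\R_+)$.

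Finally, for the continuity of $N_{y_j}: H^m(\R_+)^d\to C^{m-1}_b(\R_+)$, take $u_n\to u$ in $H^m(\R_+)^d$; by the imbedding, $u_n^{(k)}\to u^{(k)}$ uniformly for $0\le k\le m-1$ and the $u_n$ stay in a fixed ball $\{\abs{y}\le R\}\subset\R^d$. In the expansion above the products of $u$-derivatives then converge uniformly, so it remains to check $(D^{\tilde\alpha}\mu)(\cdot,u_n(\cdot))\to(D^{\tilde\alpha}\mu)(\cdot,u(\cdot))$ uniformly on $\R_+$. This is exactly where the equicontinuity of the family $(D^{\tilde\alpha}\mu(x,\cdot))_{x\ge 0}$ enters: on the compact ball $\overline{B_R}\subset\R^d$ a pointwise equicontinuous family of continuous functions is uniformly equicontinuous, and combining this with $\norm{u_n-u}{\infty}\to 0$ forces the uniform convergence of that factor; summing the finitely many summands concludes. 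I expect the only real obstacle to be organizational: writing the Fa\`a di Bruno formula cleanly enough to verify $\tilde\alpha_1<|\tilde\alpha|\le m$ (and $k\le m-1$) for every term, so that precisely \autoref{a:mugrowthsD}.\ref{hypi:mugrowthsDy}, and not the unavailable $x$-growth clause, is invoked throughout.
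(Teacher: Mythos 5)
Your proof is correct, and it takes a genuinely different organizational route than the paper's. The paper proceeds by induction on $m$: the base case $m=1$ uses the Sobolev imbedding and equicontinuity directly, and the inductive step applies the chain rule once to write $\ddx N_{y_j}(u)$ as $N^{\bar\mu}_{y_j}(u,\nabla u)$ for an auxiliary function $\bar\mu(x,y,z)=\partial_x\mu(x,y)+\sum_i\partial_{y_i}\mu(x,y)z_i$ on the doubled domain $\R_{\geq0}\times\R^{2d}$, then checks that $\bar\mu$ inherits Assumption~\ref{a:mugrowthsD}.\ref{hypi:mugrowthsDy} (for $m$) from $\mu$ (for $m+1$) and invokes the induction hypothesis plus continuity of $u\mapsto(u,\nabla u):H^{m+1}\to H^m\times H^m$. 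You instead unpack all $\ell\leq m-1$ derivatives at once via the multivariate Fa\`a di Bruno formula and verify in one shot the multiindex bookkeeping $\tilde\alpha_1<|\tilde\alpha|\leq m$ and $k\leq\ell\leq m-1$. This bookkeeping is exactly what the paper's inductive step encodes implicitly when it checks that $\bar\mu$ satisfies the hypothesis at one level lower, so the mathematical substance is the same; what differs is the packaging. The paper's induction avoids writing out the combinatorics of Fa\`a di Bruno explicitly, at the cost of the slightly non-obvious doubled-dimension auxiliary function; your version is more computational but makes the key structural point (every occurring $\tilde\alpha$ has at least one $y$-derivative, since the initial $\partial_{y_j}$ persists, so only the ``$y$-type'' clause of the assumption is ever invoked) fully explicit. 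Both correctly note that clause \ref{hypi:mugrowthsDx} is never needed, and both handle continuity the same way: uniform equicontinuity on the compact ball $\overline{B_R}$ combined with $\norm{u_n-u}{\infty}\to 0$ from the Sobolev imbedding.
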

\begin{proof}
  We prove the continuity in a similar way as done for~\autoref{thm:Ncont}. First, let $m=1$, and $(u_n) \subset H^1(\R_+)^d$  converging to some $u\in H^1(\R_+)^d$. Sobolev imbeddings imply that $u_n$, $u \in BUC(\R_+)^d$, $n\in \N$ and $u_n \to u$ uniformly, as $n\to \infty$. Thus, $x\mapsto \frac{\partial}{\partial y_i} \mu(x,u(x))$ is continuous and bounded. 

 Define $R:= \sup_{n\in \N} \norm{u_n}{\infty}<\infty$ and observe that the family of functions
 \[y\mapsto  \tfrac{\partial}{\partial y_j}\mu(x,y),\quad x\in \R_+\]
 is uniformly equicontinuous on the $\R^d$-ball of radius $R$.

  Let $\epsilon >0$ be arbitrary and $\delta= \delta_{\epsilon,R}>0$ such that for all $y$, $\tilde y\in \R^d$ with $\abs{y}$, $\abs{\tilde y} \leq R$ and $\abs{\tilde y-y}<\delta$ it holds that
  \[\sup_{x\in \R_+} \abs{  \frac{\partial}{\partial y_i} \mu(x,y) - \frac{\partial}{\partial y_i}  \mu (x,\tilde y)} <\epsilon. \]
  Now, let $N_\delta \in \N$ such that for all $n\geq N$ it holds that
  \[ \norm{u_n - u}{\infty} <\delta.\]
  Hence, $\sup_{x\in \R_+} \norm{ \frac{\partial}{\partial y_i} \mu(x,u(x)) -  \frac{\partial}{\partial y_i} \mu(x,u_n(x))}{\infty} <\epsilon$ for all $n\geq N_\delta$, and thus
  \[\norm{N_{y_j}(u_n) - N_{y_j} (u)}{\infty} \to 0, \quad n\to\infty.\] 
  Let $M\subset H^1(\R_+)$ be bounded and $R>0$ so, that $M$ is contained in the radius $R$ ball of $C_b(\R_+)$. Then, for all $u\in M$,
  \[  \norm{N_{y_j}(u)}{\infty} \leq \sup_{\abs{y}< R} \sup_{x\in \R_+} \abs{(\tfrac{\partial}{\partial y_j} \mu) \mu(x,y)} <\infty.\]
  We finish the proof by induction, so assume the claim holds true for $m\in \N$. By induction hypothesis, $N_{y_j}$ is continuous from $H^{m+1}$ into $C^{m-1}_b$, so it remains to show that the same holds true for $\frac{\d}{\d x}   N_{y_j}$. Chain rule yields
  \begin{equation}\label{eq:chainrule}
    \tfrac{\d}{\d x} N_{y_j}(u)(x) = \frac{\partial^2}{\partial x \partial y_j} \mu(x,u(x)) + \sum_{i=1}^d \frac{\partial^2}{\partial y_i \partial y_j} \mu(x,u(x)) \nabla u_i(x),  
  \end{equation}
  for $u\in H^{m+1}(\R_+)^d\hookrightarrow BUC^m(\R_+)^d$. The function $\bar \mu$ defined as
  \[\bar \mu(x,y,z) :=  \frac{\partial}{\partial x} \mu(x,y) + \sum_{i=1}^d \frac{\partial}{\partial y_i} \mu(x,y) z_i,\]
  for $x\in \R_{\geq 0}$, $(y,z)\in \R^{d+d}$, satisfies \autoref{a:mugrowthsD}.\ref{hypi:mugrowthsDy} for $m$. Hence, by induction hypothesis, the Nemytskii operators corresponding to the $y_j$ (and $z_i$)-derivatives of $\bar \mu$ are continuous and map bounded sets into bounded sets, from $H^{m}(\R_+)^{d+d}$ into $C^{m-1}_b(\R_{\geq 0})^{d+d}$. Since the map $u\mapsto \nabla u_i$ is linear continuous from $H^{m+1}(\R_+)^d$ into $H^{m}(\R_+)$, we get the properties for $\frac{\d}{\d x}   N_{y_j}$.
\end{proof}

In the following, we write for $j=1,..,d$, $u\in H^m(\R_+)^d$, $v\in H^m(\R_+)$,
\[ \widetilde N_{y_j}(u,v) := N_{y_j}(u) v = (\tfrac{\partial}{\partial y_j} \mu)(.,u(.)) v(.).\]
\begin{lem}
  \label{lem:Nuniform}
  Let $m\geq 1$ and $\mu$ fulfilling \autoref{a:mugrowthsD}.\ref{hypi:mugrowthsDy} for $m+1$, then, the mapping
  \[ \Phi_j: \,u\mapsto \widetilde N_{y_j}(u,.)\]
  is continuous from $H^m(\R_+)^d$ into $\Lbd(H^m(\R_+))$, for all $j= 1,...,d$. Moreover, $\Phi_i$ maps bounded sets into bounded sets.
\end{lem}
\begin{proof}
  Note that $\tilde \mu(x,y,z) := \mu(x,y)z$ fulfills \autoref{a:mugrowthsD}.\ref{hypi:mugrowthsDx} for $m$ so that \autoref{thm:Ncont} yields continuity of
  \[ H^m(\R_+)^{d+1} \ni (u,v)\mapsto \widetilde N_{y_j}(u,v) \in H^m(\R_+).\] 
  Of course, $\widetilde N_{y_j}$ is linear in its second argument so that $\widetilde N_{y_j}(u,.)\in \Lbd(H^m(\R_+))$, for each $u\in H^m(\R_+)^d$. It remains to prove continuity in the uniform operator topology for which we proceed by induction, again. 
  \begin{enumerate}[label= Step \Roman*:,fullwidth]
  \item With $m=1$ let $(u^{(n)})\subset H^1(\R_+)^d$, $u\in H^1(\R_+)^d$ and $v\in H^1(\R_+)$. First note that by \autoref{thm:NcontC}, $N_{y_j}$ is continuous from $H^1$ into $C_b$, so that
    \[ \norm{\widetilde N_{y_j}(u^{(n)},v) - \widetilde N_{y_j}(u,v)}{L^2} \leq c \norm{N_{y_j} (u^{(n)})- N_{y_j}(u^{(n)})}{C_b} \norm{v}{L^2},\]
    converges to $0$, as $n\to\infty$, uniformly in $v$. Similar we get uniform $L^2$-convergence for $N_{y_j}(u^{(n)})\ddx v$ and for the operator
    \[ (u,v) \mapsto (\tfrac{\partial^2}{\partial x\partial y_j} \mu)(x,u(x))v(x).\]
    Moreover, for all $i$, $j\in \{1,...,d\}$, by Sobolev imbeddings
    \begin{multline}
      \label{eq:51}
      \int_{\R_+} \abs{\tfrac{\partial^2}{\partial y_j\partial y_i} \mu (x,u^{(n)}(x)) \nabla u_i^{(n)}(x) - \tfrac{\partial^2}{\partial y_j\partial y_i}\mu(x,u(x))\nabla u_i(x)}^2 \abs{v(x)}^2 \d x \leq \\
      \leq K \norm{v}{H^1}^2 \int_{\R_+} \abs{\tfrac{\partial}{\partial y_j\partial y_i} \mu (x,u^{(n)}(x)) \nabla u_i^{(n)}(x) - \tfrac{\partial}{\partial y_j\partial y_i}\mu(x,u(x))\nabla u_i(x)}^2  \d x .        
    \end{multline}
    Note that $\tfrac{\partial}{\partial y_i}\mu$ fulfills \autoref{a:mugrowthsD}.\ref{hypi:mugrowthsDy} and recall that multiplication is continuous from $C_b\times L^2$ into $L^2$. Hence, the integral converges to $0$, as $n\to\infty$ by application of \autoref{thm:NcontC} to the corresponding Nemytskii operator, and so we conclude continuity of $\Phi$.
    
    Using almost the same estimates and applying the corresponding of part \autoref{thm:NcontC}, we get that $\Phi_i$ maps bounded sets into bounded sets again. 
  \item By induction hypothesis, the Lemma holds true for $m\in \N$ fixed, so assume that $\mu$ fulfills \autoref{a:mugrowthsD}.\ref{hypi:mugrowthsDy} for $m+2$. Then, $\Phi_j$ is continuous from $H^{m+1}(\R_+)^d$ into $\Lbd(H^m(\R_+))$. Let $(u^{(n)})\subset H^{m+1}(\R_+)^d$ converging to $u\in H^{m+1}(\R_+)^d$. Note that
    \begin{multline}
      \label{eq:52}
      \norm{\Phi(u^{(n)}) - \Phi(u)}{\Lbd(H^{m+1})}^2 \leq \\
      \leq \sup_{w\in H^{m}} \frac{\norm{\Phi(u^{(n)})w - \Phi(u)w}{H^m}^2}{\norm{w}{H^m}^2}+ \sup_{v\in H^{m+1}}\frac{\norm{\frac{\d^{m+1}}{\d x^{m+1}} (\Phi(u^{(n)})v - \Phi(u)v)}{L^2}^2}{\norm{v}{H^{m+1}}^2}.
    \end{multline}
    The first term vanishes as $n\to\infty$ by induction hypothesis. For all $v\in H^{m+1}$, we can write the latter one can be estimated by 
    \begin{align*}
      \frac{\d^{m+1}}{\d x^{m+1}} (\Phi(u^{(n)})v - \Phi(u)v) &= \frac{\d^{m}}{\d x^{m}} [(\ddx N_{y_j}(u^{(n)}) - \ddx N_{y_j}(u)) v] \\
                                                          &\qquad + \frac{\d^{m}}{\d x^{m}} [(N_{y_j}(u^{(n)}) - N_{y_j}(u))w],
    \end{align*}
    for $w:= \ddx v \in H^m$. By induction hypothesis, the latter term converges to $0$ in $L^2$, uniformly over all $w\in H^m(\R_+)$. For the first summand, we observe that $D_x \mu(x,y)$ fulfills \autoref{a:mugrowthsD}.\ref{hypi:mugrowthsDy} for $m+1$ so that the induction hypothesis applied on 
    \[ \Psi(u)v := (\tfrac{\partial^2}{\partial x \partial y_j} \mu)(.,u(.)) v\]
    yields $L^2$ convergence. Plugging in into~\eqref{eq:52} finally yields the convergence uniform in $\Lbd(H^{m+1})$. With the same decomposition, we deduce from induction hypothesis that $\Phi_i$ maps bounded sets from $H^{m+1}$ into bounded sets of $\Lbd(H^{m+1})$.\qedhere
  \end{enumerate}
\end{proof}

%%%%%%%%%%%%%%%%%%%%%%%%%%%%%%%%%%%% 

Based on the continuity in the uniform topology, we are now able to prove Fr\'echet differentiability.
\begin{thm}\label{thm:Ndiff}
  If \autoref{a:mugrowthsD} holds for some integer $m+1$, $m\geq 1$, then the operator $N$ defined above is in $C^1(H^m(\R_+)^d, H^m(\R_+))$ with derivative
  \[ DN(u)v = \sum_{j=1}^d  N_{y_j}(u)  v_j,\quad u,v\in H^m(\R_+)^d.\]
\end{thm}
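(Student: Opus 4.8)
The plan is to identify the Fréchet derivative of $N$ with the bounded operator $L_u v := \sum_{j=1}^d N_{y_j}(u)\,v_j$ furnished by \autoref{lem:Nuniform}, to verify differentiability through a fundamental-theorem-of-calculus expansion of the increment, and then to read off continuity of $u\mapsto DN(u)$ directly from \autoref{lem:Nuniform}.

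First I would collect the inputs. Since \autoref{a:mugrowthsD} is assumed at level $m+1$, it holds in particular at level $m$, so $N$ is a well-defined continuous map from $H^m(\R_+)^d$ into $H^m(\R_+)$ by \autoref{thm:Ncont}; moreover part~\ref{hypi:mugrowthsDy} holds at level $m+1$, so \autoref{lem:Nuniform} applies and, for $j=1,\dots,d$, the map $\Phi_j:u\mapsto\widetilde N_{y_j}(u,\cdot)$, that is the operator $v\mapsto N_{y_j}(u)v$, is continuous from $H^m(\R_+)^d$ into $\Lbd(H^m(\R_+))$ and sends bounded sets into bounded sets. I would then define $L_u\in\Lbd(H^m(\R_+)^d,H^m(\R_+))$ by $L_uv:=\sum_{j=1}^d\Phi_j(u)v_j$, which coincides with the claimed derivative $\sum_{j=1}^d N_{y_j}(u)v_j$.

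Next I would estimate the remainder. For $u,v\in H^m(\R_+)^d$, applying the mean value theorem pointwise in $x$ to $s\mapsto\mu(x,u(x)+sv(x))$ yields the identity, valued in $H^m(\R_+)$,
$$N(u+v)-N(u)-L_uv=\sum_{j=1}^d\int_0^1\bigl(\Phi_j(u+sv)-\Phi_j(u)\bigr)v_j\,\d s,$$
where the $s$-integrand is a continuous, hence Bochner integrable, $H^m(\R_+)$-valued function by \autoref{lem:Nuniform}. By Minkowski's integral inequality and the operator-norm bound for $\Phi_j$,
$$\norm{N(u+v)-N(u)-L_uv}{H^m}\le\Bigl(\sum_{j=1}^d\int_0^1\norm{\Phi_j(u+sv)-\Phi_j(u)}{\Lbd(H^m)}\,\d s\Bigr)\,\norm{v}{H^m}.$$
Since $\norm{(u+sv)-u}{H^m}=s\,\norm{v}{H^m}\to0$ uniformly in $s\in[0,1]$ as $\norm{v}{H^m}\to0$, continuity of each $\Phi_j$ forces the bracketed sum of integrals to tend to $0$; thus the remainder is $o(\norm{v}{H^m})$, which shows that $N$ is Fréchet differentiable at every $u$ with $DN(u)=L_u$. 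Continuity of $u\mapsto DN(u)$, $DN(u)v=\sum_{j=1}^d\Phi_j(u)v_j$, as a map into $\Lbd(H^m(\R_+)^d,H^m(\R_+))$ then follows at once from the continuity of each $\Phi_j$, so $N\in C^1(H^m(\R_+)^d,H^m(\R_+))$.

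The hard part has essentially been isolated already in \autoref{lem:Nuniform}: because of the loss of one derivative, $N_{y_j}(u)$ lies only in $C^{m-1}_b(\R_+)$ by \autoref{thm:NcontC} and not in $C^m_b(\R_+)$, so one cannot bound $\norm{N_{y_j}(u)v_j}{H^m}$ via the multiplication estimate \eqref{eq:multCmHm} — this genuinely fails even for the linear case $\mu(x,y)=y$ on the unbounded domain $\R_+$. \autoref{lem:Nuniform} is precisely the substitute: it controls $v\mapsto N_{y_j}(u)v$ as a bounded operator on $H^m(\R_+)$ depending continuously on $u$, and granted that, the differentiability argument above is the routine one; the only point still requiring a word is the Bochner integrability of the $s$-integrand, which is again supplied by the continuity statement in \autoref{lem:Nuniform}.
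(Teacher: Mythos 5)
Your proposal is correct and follows essentially the same route as the paper's proof: both exploit the fundamental-theorem-of-calculus identity
\[
N(u+v)-N(u)=\sum_{j=1}^d\int_0^1 \widetilde N_{y_j}(u+sv,v_j)\,\d s
\]
and then lean on \autoref{lem:Nuniform} for the operator-norm control. The only cosmetic difference is that the paper first verifies G\^ateaux differentiability and then uses the already-established continuity of $u\mapsto DN(u)$ from \autoref{lem:Nuniform} to upgrade to $C^1$, whereas you bound the remainder directly by
$\bigl(\sum_j\int_0^1\norm{\Phi_j(u+sv)-\Phi_j(u)}{\Lbd(H^m)}\,\d s\bigr)\norm{v}{H^m}$
and read off Fr\'echet differentiability at once; these are interchangeable, since the decisive ingredient in both cases is exactly the uniform-operator-topology continuity supplied by \autoref{lem:Nuniform}.
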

\begin{proof}
  From \autoref{thm:Ncont} we already know that $N$ maps $H^m(\R_+;\R^d)$ continuously into $H^m(\R_+)$. Moreover, \autoref{lem:Nuniform} tells us that $DN$, defined as above, is continuous from $H^m(\R_+)^d$ into $\Lbd(H^m(\R_+))$. Thus, it remains to verify that $DN$ is at least the G\^ateaux derivative of $N$, i.\,e. that for any $u$, $v\in H^{m +1}(\R_+;\R^d)$,
  \begin{equation}
    \frac{1}{\epsilon} \norm{N(u+\epsilon v) - N(u) - \epsilon \sum_{j=1}^d N_{y_j}(u)v_j}{H^m} \longrightarrow 0,\;\quad \text{ as }\epsilon\rightarrow 0.\label{eq:Ndiffqu}
  \end{equation}
  By fundamental theorem of calculus, we get for fixed $u$, $v\in H^{m}$, and all $x\in \R_+$,
  \begin{gather}
    \label{eq:NFundThm}
    \begin{split}
      % \mu(x, u(x) + \epsilon w(x)) - \mu(.,u(x)) 
      N(u+\epsilon v)(x) - N(u)(x)&= \int_0^1 \sum_{j=1}^d \frac{\partial}{\partial y_j}\mu(x,u(x) + t\epsilon v(x)) \epsilon v_j(x)\d t  \\
      &= \epsilon \sum_{j=1}^d \int_0^1 \widetilde N_{y_j}(u+t\epsilon v,v_j)(x) \d t
    \end{split}    
  \end{gather}
  The map $ t\mapsto \widetilde N_{y_i}(u+t\epsilon v, v_i)$ is continuous from $[0,1]$ into $H^{m}(\R_+)$ by \autoref{thm:Ncont}. Therefore, the integral in equation~\eqref{eq:NFundThm} can be considered as an Bochner integral and~\eqref{eq:Ndiffqu} follows from \autoref{lem:Nuniform} and the estimate
  \begin{multline*}
    \norm{N(u+\epsilon v) - N(u) - \epsilon \sum_{j=1}^d N_{y_j}v_j}{H^m}\leq \\ \leq \abs{\epsilon}\sum_{j=1}^d \int_0^1 \norm{\left(N_{y_j}(u+t\epsilon v) - N_{y_j}(u)\right)v_j}{H^m} \d t. \qedhere
  \end{multline*}
\end{proof}
If $\mu \in C^2$, then we define for $u\in H^m(\R_+)^d$, $v$, $w\in H^m(\R_+)$, $x\in \R_+$,
\begin{align*}
  N_{y_i,y_j} (u)(x) &:= \frac{\partial^2}{\partial y_i \partial y_j} \mu(x, u(x)).% , \\
  % \widetilde N_{y_i,y_j} (u,v,w)(x) &:= \frac{\partial^2}{\partial y_i \partial y_j} \mu(x, u(x))v(x)w(x),
\end{align*}

\begin{thm}\label{thm:Ndiff2}
  Assume that $\mu$ satisfies \autoref{a:mugrowthsD} for $m+2$, $m\geq 1$, then the Nemytskii operator $N: H^m(\R_+)^d \to H^m(\R_+)$ is of class $C^2$ with second derivative
  \[ D^2 N(u)[v,w] = \sum_{i=1}^d \sum_{j=1}^d N_{y_i,y_j}(u) v_j w_i ,\] %=  \sum_{i=1}^d \sum_{j=1}^d \widetilde N_{y_i,y_j} (u,v_j,w_i)
  for $u$, $v$, $w\in H^m(\R_+)^d$.
\end{thm}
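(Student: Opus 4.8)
The plan is to deduce \autoref{thm:Ndiff2} from \autoref{thm:Ndiff} and \autoref{lem:Nuniform} by showing that the first derivative $DN$ is itself of class $C^1$ as a map $H^m(\R_+)^d\to\Lbd\big(H^m(\R_+)^d,H^m(\R_+)\big)$; this yields $N\in C^2$ with $D^2N=D(DN)$. Since \autoref{a:mugrowthsD} for $m+2$ implies it for $m+1$, \autoref{thm:Ndiff} applies and gives $DN(u)v=\sum_{j=1}^d\Phi_j(u)v_j$, where $\Phi_j(u):=\widetilde N_{y_j}(u,\cdot)\in\Lbd(H^m(\R_+))$ and, by \autoref{lem:Nuniform}, $u\mapsto\Phi_j(u)$ is continuous and maps bounded sets to bounded sets. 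Thus it suffices to prove $\Phi_j\in C^1\big(H^m(\R_+)^d,\Lbd(H^m(\R_+))\big)$ for each $j=1,\dots,d$.

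The first step is to identify the candidate derivative together with its continuity. I would apply \autoref{lem:Nuniform} with $\mu$ replaced by $\tilde\mu:=\tfrac{\partial}{\partial y_i}\mu$; here one checks that $\tilde\mu$ satisfies \autoref{a:mugrowthsD}.\ref{hypi:mugrowthsDy} for $m+1$, because any derivative $D^\alpha\tilde\mu$ with $\abs{\alpha}\leq m+1$ equals a derivative $D^\beta\mu$ with $\beta_1=\alpha_1<\abs{\alpha}+1=\abs{\beta}\leq m+2$, which lies in the range governed by \autoref{a:mugrowthsD}.\ref{hypi:mugrowthsDy} for $m+2$. Writing $\widetilde N_{y_i,y_j}(u,\eta):=N_{y_i,y_j}(u)\,\eta$, \autoref{lem:Nuniform} then tells us that $u\mapsto\widetilde N_{y_i,y_j}(u,\cdot)$ is continuous and bounded-to-bounded from $H^m(\R_+)^d$ into $\Lbd(H^m(\R_+))$. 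For $u$, $w\in H^m(\R_+)^d$ I define $D\Phi_j(u)[w]\in\Lbd(H^m(\R_+))$ as the operator $v\mapsto\sum_{i=1}^d\widetilde N_{y_i,y_j}(u,w_iv)$; this makes sense since $w_iv\in H^m(\R_+)$ by \autoref{lem:mult}, and $w\mapsto D\Phi_j(u)[w]$ is linear and bounded. Composing the continuous map $u\mapsto\widetilde N_{y_i,y_j}(u,\cdot)$ with multiplication by $w_i$ shows that $u\mapsto D\Phi_j(u)$ is continuous from $H^m(\R_+)^d$ into $\Lbd\big(H^m(\R_+)^d,\Lbd(H^m(\R_+))\big)$.

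Next I would verify that $D\Phi_j$ is the G\^ateaux derivative of $\Phi_j$; together with the continuity just established, this upgrades to Fr\'echet differentiability and finishes the proof that $\Phi_j\in C^1$. Fixing $u$, $w\in H^m(\R_+)^d$ and $v\in H^m(\R_+)$, the fundamental theorem of calculus applied pointwise in $x\in\R_+$ and multiplied by $v(x)$ gives
\[
\big(\Phi_j(u+\epsilon w)-\Phi_j(u)\big)v=\epsilon\sum_{i=1}^d\int_0^1 N_{y_i,y_j}(u+t\epsilon w)\,w_i\,v\;\d t,
\]
the integrand $t\mapsto\widetilde N_{y_i,y_j}(u+t\epsilon w,w_iv)$ being a continuous $H^m(\R_+)$-valued map by \autoref{lem:Nuniform}, so that the integral is a Bochner integral in $H^m(\R_+)$. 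Subtracting $\epsilon\,D\Phi_j(u)[w]v$, dividing by $\abs{\epsilon}$, taking the supremum over $\norm{v}{H^m}\leq1$ and using $\norm{w_iv}{H^m}\leq c\,\norm{w_i}{H^m}\norm{v}{H^m}$ from \autoref{lem:mult}, the difference quotient is dominated by
\[
\sum_{i=1}^d c\,\norm{w_i}{H^m}\int_0^1\norm{\widetilde N_{y_i,y_j}(u+t\epsilon w,\cdot)-\widetilde N_{y_i,y_j}(u,\cdot)}{\Lbd(H^m(\R_+))}\;\d t ,
\]
which tends to $0$ as $\epsilon\to0$ by dominated convergence and the uniform-operator-topology continuity of $u\mapsto\widetilde N_{y_i,y_j}(u,\cdot)$ (the integrand being bounded, uniformly in $t\in[0,1]$ and $\abs{\epsilon}\leq1$, by twice the supremum of $\norm{\widetilde N_{y_i,y_j}(\cdot,\cdot)}{\Lbd(H^m(\R_+))}$ over a fixed bounded neighbourhood of $u$, which is finite by the bounded-to-bounded property). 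Consequently $\Phi_j\in C^1$, hence $DN\in C^1$ and $N\in C^2$, with
\[
D^2N(u)[w,v]=\big(D(DN)(u)[w]\big)v=\sum_{j=1}^dD\Phi_j(u)[w]\,v_j=\sum_{i,j=1}^dN_{y_i,y_j}(u)\,w_i\,v_j ,
\]
which, by the symmetry $N_{y_i,y_j}=N_{y_j,y_i}$, equals the asserted expression $\sum_{i,j}N_{y_i,y_j}(u)v_jw_i$, consistent with the symmetry of the second Fr\'echet derivative.

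The argument is a faithful one-level lift of the proof of \autoref{thm:Ndiff}, so the only genuinely delicate points are the index bookkeeping needed to see that $\tfrac{\partial}{\partial y_i}\mu$ inherits \autoref{a:mugrowthsD}.\ref{hypi:mugrowthsDy} for $m+1$, and the fact that the convergence of the difference quotient must be secured in the \emph{uniform} operator topology of $\Lbd(H^m(\R_+))$ rather than merely in the strong one; both are handled precisely by \autoref{lem:Nuniform} (applied to $\tfrac{\partial}{\partial y_i}\mu$), which is exactly why that lemma was isolated in the form stated.
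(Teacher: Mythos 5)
Your proposal is correct and takes essentially the same approach as the paper: both establish continuity of the claimed second derivative by applying \autoref{lem:Nuniform} to $\tfrac{\partial}{\partial y_i}\mu$ (with the same index bookkeeping to check the hypothesis for $m+1$), and both obtain G\^ateaux differentiability of $DN$ via the fundamental theorem of calculus, upgrading to Fr\'echet by the established continuity. The only cosmetic difference is that you work component-wise with $\Phi_j$ rather than with $DN$ as a whole.
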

\begin{proof}
  By the previous theorem, $N$ is of class $C^1$, so we have to show the same for the map 
  \[ DN: H^m(\R_+) \to \Lbd(H^m(\R_+)^d; H^m(\R_+)), \quad DN(u) := \left(v\mapsto \sum_{j=1}^d \widetilde N_{y_j}(u,v_j)\right)\]
  Since $H^m$ is a Banach algebra, we get for $u$, $\bar u \in  H^m(\R_+)^d$, 
  \begin{multline*}
    \norm{D^2N(u) - D^2N(\bar u)}{\Lbd(H^m(\R_+)^d,H^m(\R_+)^d;H^m)}\\
    \leq \sum_{i,j=1}^d\sup_{\norm{v}{}= 1}\sup_{\norm{w}{}= 1}\norm{ \left(N_{y_i,y_j}(u) - N_{y_i,y_j} (\bar u)\right) v_j w_i}{H^m}\\
    \leq c \sum_{i,j=1}^d\sup_{\norm{v}{}= 1}\norm{N_{y_i,y_j}(u)v - N_{y_i,y_j} (\bar u)v}{H^m}.
  \end{multline*}
  Now, we apply \autoref{lem:Nuniform} to $\frac{\partial}{\partial y_i} \mu(x,u(x))$, $i=1,...,d$, which indeed fulfill \autoref{a:mugrowthsD}.\ref{hypi:mugrowthsDy} for $m+1$. This yields continuity of $D^2N$. 

  To finish the proof it again suffices to show differentiability in G\^ateaux sense. Fix $u$, $w\in H^m(\R_+)^d$ and let $\epsilon >0$. As in the proof of \autoref{thm:Ndiff}, cf.~\eqref{eq:NFundThm}, we get by fundamental theorem of calculus, for all $v\in H^m(\R_+)^d$
  \begin{multline*}
    DN(u+\epsilon w)v - DN(u)v - D^2N(u)(v,w) \\
    = \epsilon \sum_{i,j=1}^d \int_0^1 \left(N_{y_i,y_j}(u+t\epsilon w) - N_{y_i,y_j}(u)\right)w_iv_j \d t \\
    = \epsilon \int_0^1 D^2 N(u+t\epsilon w)(v,w) - D^2N(u)(v,w)\d t.
  \end{multline*}
  From the first part of this proof we know that $\epsilon \mapsto D^2N(u+ \epsilon w)$ is uniformly continuous from $[0,1]$ into the space of continuous bilinear operators. Hence, the right hand side is in $o(\epsilon)$, uniformly in $v\in H^m$.
\end{proof}

The following conclusion is a combination of \autoref{thm:NcontC} and the representations of $DN$ and $D^2N$.
\begin{cor}\label{cor:DND2Nbdsets}
  Under the assumptions of respectively \autoref{thm:Ndiff} and~\ref{thm:Ndiff2}, the maps 
  \[DN: H^m(\R_+)^d \to \Lbd(H^m(\R_+)^d;H^m(\R_+))\]
  and
  \[D^2N : H^m(\R_+)^d \to \Lbd(H^m(\R_+)^d, H^m(\R_+)^d;H^m(\R_+)\]
  map bounded sets into bounded sets.
\end{cor}

%%% Local Variables:
%%% mode: latex
%%% TeX-master: "0paper"
%%% End:

  \section{The Noise Operator}
  \label{A:noise}
  We will now study the operator-valued map $\cC$, defined previously by
\[ (\cC(u)w)(x) =
\begin{pmatrix}
  \sigma_+(x,u_1(x)) (T_\zeta w)(u_3+x)\\ \sigma_-(-x,u_2(x)) (T_\zeta w)(u_3-x) \\ 0
\end{pmatrix}
\]
for $u\in \dom(\cC)\subset L^2(\R_+)\oplus L^2(\R_+) \oplus \R$, $w\in L^2(\R)=: U$ and $x\in \R$. We can reduce the problem to the operator
\begin{equation}
  \label{eq:Ccomponent}
  \Psi: (u,x_*)\mapsto \sigma(.,u(.)) T_\zeta (.+x_*)
\end{equation}
for $\sigma: \R^2\to \R$, and $\zeta$ and an integral kernel $\zeta:\R^2\to \R$, which we aim to take values in spaces of Hilbert-Schmidt operators like $\HS(U; L^2(\R_+))$. As above, we write
\[ T_\zeta \colon w\mapsto \int_\R \zeta (x,y)w(y) \d y\]
and define the Nemytskii operator 
\[ N_\sigma\colon u\mapsto \sigma(.,u(.)).\]
Naturally, it will make sense to separate the study of $\Psi$ into the operators $N_\sigma$ and $T_\zeta$. Recall that we have discussed the Nemytskii operators $N_\sigma$ in~\autoref{A:nem}.
\subsection{The Hilbert-Schmidt Property}
\label{Asec:NoiseHS}
Note that on $L^2(D)$, for a domain $D\subset \R^d$, $d\in \N$, every Hilbert-Schmidt operator is of the form $T_\kappa$, for an integral kernel $\kappa$ satisfying
\begin{equation}
  \int_D\int_D \abs{\kappa(x,y)}^2 \d x\d y<\infty,
\end{equation}
see e.\,g. \cite[Section XI.6]{dunfordschwartz2}. When $D$ has
infinite Lebesgue measure, this condition is obviously violated for
convolution kernels $\kappa(x,y) = \kappa(x-y)$, in which have been
interested in Example~\ref{ref:conv} for instance. Hence, $T_\zeta$
itself will typically not be Hilbert-Schmidt on the spaces of
interest. We skip the proofs in the following three lemmas since they
will be the same as the proofs of respectively Lemma 7.1, 7.2 and 7.4
in \cite{SFBPDir}.

\begin{lem}\label{lem:HtimesC}
  For any integer $n\geq 0$, multiplication is bilinear continuous from $H^n(\R_+) \times C_b^n(\R_{\geq 0})$ into $H^n(\R_+)$. 
\end{lem}

The lemma is the first step in the direction to separate our discussion of $\Psi$ into the operators $N_\sigma$ and $T_\zeta$. Provided that $\zeta$ is sufficiently nice, $T_\zeta$ will indeed map into the space of bounded and uniformly continuous functions. 

\begin{ass}
  \label{a:A:zeta}
  Let $n\geq 1$, $\zeta(.,y) \in C^{n+1}(\R)$ for all $y\in \R$ and $\tfrac{\partial^{i}}{\partial x^i}\zeta(x,.)\in L^2(\R)$ for all $x\in \R$, $i\in\{0,\dots, n+1\}$. Moreover, 
  \begin{equation}
    \sup_{x\in \R} \norm{\tfrac{\partial^{i}}{\partial x^i} \zeta(x,.)}{L^2(\R)} <\infty,\quad i=0,1,\ldots ,n+1.\label{eq:A:Azeta}
  \end{equation}
  In the following, we use the notation $\zeta^{(i)}:=\tfrac{\partial^{i}}{\partial x^i} \zeta$.
\end{ass}
\begin{rmk}\label{rmk:conv}
  For convolution kernels $\zeta(x,y):= \zeta(x-y)$ this assumptions is satisfied when $\zeta \in H^{n+1}(\R)\cap C^{n+1}(\R)$.
\end{rmk}

\begin{lem}\label{lem:Tc}
  Let \autoref{a:A:zeta} be fulfilled for $n\in \N$. Then, $T_\zeta$
  maps $U$ into $BUC^n(\R)$. Moreover, $T_\zeta w$ and its first $n$
  derivatives are Lipschitz continuous for all $w\in U$ and it holds
  that $T_\zeta \in \Lbd (U; BUC^2(\R))$.
\end{lem}

\begin{lem}\label{lem:HSest}
  Let $n\in\N$ and \autoref{a:A:zeta} be satisfied. For $u\in H^n(\R_+)$ and $x_*\in \R$ it holds that
  \[\norm{u \* T_\zeta(.+x_*)))}{\HS(U; H^n(\R_+))} \leq K \norm{u}{H^n(\R_+)} \sup_{x\in \R}\sum_{i=0}^n\norm{\zeta^{(i)}(x,.)}{L^2(\R)}\]
\end{lem}
For application in \autoref{sec:positivityproofs} we need to deal $\cC$ on the domain of the Dirichlet Laplacian. In fact, Assumption~\ref{a:sigma} and Lemma~\ref{lem:HtimesC} ensure $N_\sigma(u)\in H^2(\R_+)\cap H^1_0(\R_+)$ for all $u\in H^2(\R_+) \cap H^1_0(\R_+)$.

\subsection{Lipschitz Continuity and Differentiability}

In order to apply the results let us introduce the translation group
$(\theta_x)_{x\in \R}$ which is strongly continuous on $BUC(\R)$. 
\begin{rmk}\label{rmk:Psi2C}
  By the structure of the direct sum of Hilbert spaces, the following two results directly extend to $\cC$ as a mapping from $H^n(\R_+)\oplus H^n(\R_+) \oplus \R$ into $\HS(U; H^n(\R_+) \oplus H^n(\R_+) \oplus\R)$.  
\end{rmk}
\begin{rmk}
  Note that for $x\in \R$
  \[ \theta_x\circ T_{\zeta} =   T_{\zeta_x},\]
  where $\zeta_x := \zeta(x+.,.)$ satisfies Assumption~\ref{a:A:zeta}, whenever $\zeta$ does.
\end{rmk}
We impose the following conditions on $\sigma$.
\begin{ass}\label{a:A:sigma}
  Let $n\geq 1$ and assume that $\sigma \in C^n(\R^2;\R)$ satisfies 
  \begin{enumerate}[label=(\roman*)]
  \item\label{ai:A:sigmagrowths} For every multi-index $I = (i,j) \in \N^2$ with $\abs{I} \leq n$ there exist $a_I\in L^2(\R_+)$ and $b_I \in L^\infty_{loc}(\R, \R_+)$ such that
    \[  \abs{\tfrac{\partial^{\abs{I}}}{\partial x^{i}\partial y^{j}} \sigma(x,y)} \leq  \begin{cases}   b_I(y)\left( a_I(|x|) +\abs{y}\right), & j=0, \\ b_I (y), & j\neq 0. \end{cases}\]
  \item\label{ai:A:sigmalip} $\sigma$ and its partial derivatives (in $x$ and $y$) are locally Lipschitz with Lipschitz constants independent of $x\in \R$.  
  \end{enumerate}
\end{ass}

\begin{thm} \label{thm:Cdiff}
  Let $n\in \N$ and assume that Assumption~\ref{a:A:zeta} is fulfilled for $n+1$ and, respectively,~\ref{a:A:sigma} for $n+2$. Then, $\Psi$ is of class $C^2$ from $H^n(\R_+)\oplus \R$ into $\HS:= \HS(U; H^n(\R_+))$, with derivatives
  \begin{align*}
    D\Psi(u,x)(v,y) &=  DN_\sigma (u)v \mHC \theta_{x}T_\zeta  + y N_\sigma(u)\mHC \theta_{x} T_{\zeta '} \\
                    &=\left( w\mapsto 
                      \ddy \sigma(.,u) v T_\zeta w(.+x) + y \sigma(.,u) T_{\zeta'} w(.+x)\right),\\
    D^2\Psi(u)[(v,y),(\bar v,\bar y)]  &= 
                                         D^2N_\sigma (u)[v,\bar v] \* \theta_{x}T_\zeta  + y DN_\sigma(u)\bar v \* \theta_{x} T_{\zeta '}\\
                    &\qquad 
                      + \bar y DN_\sigma (u)v \mHC \theta_{x}T_{\zeta'} + y\bar y N_\sigma(u)\mHC \theta_{x} T_{\zeta ''}.
  \end{align*}
  Moreover, $\Psi$, $D\Psi$, and $D^2\Psi$ map bounded sets into bounded sets.% and, if $u\in H^2(\R_+)\cap H^1_0(\R_+)$, then also $\Psi(u,x)w \in H^2(\R_+)\cap H^1_0(\R_+)$ for all $x\in \R$, $w\in U = L^2(\R)$.
\end{thm}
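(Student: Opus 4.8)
The plan is to exhibit $\Psi$ as a composition of maps that are each visibly of class $C^2$ and then invoke the chain rule. By \autoref{rmk:Psi2C} it suffices to treat the scalar building block $\Psi(u,x)=N_\sigma(u)\mHC(\theta_x T_\zeta)$ of~\eqref{eq:Ccomponent}, where $N_\sigma(u):=\sigma(\cdot,u(\cdot))$ is the scalar Nemytskii operator studied in \autoref{A:nem} and $\theta_x T_\zeta=T_{\zeta(\cdot+x,\cdot)}$. Introduce the normed space $\mathcal Z$ of integral kernels $\kappa\colon\R^2\to\R$ that are $n$ times continuously differentiable in the first variable and satisfy $\norm{\kappa}{\mathcal Z}:=\sup_{z\in\R}\sum_{i=0}^{n}\norm{\tfrac{\partial^i}{\partial x^i}\kappa(z,\cdot)}{L^2(\R)}<\infty$, together with the pointwise multiplication $m\colon H^n(\R_+)\times\mathcal Z\to\HS$, $m(g,\kappa)\colon w\mapsto g\mHC\big(T_\kappa w\big)|_{\R_+}$. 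With $\mathcal T(x):=\zeta(\cdot+x,\cdot)$ one then has $\Psi(u,x)=m\big(N_\sigma(u),\mathcal T(x)\big)$, i.e. $\Psi=m\circ(N_\sigma\times\mathcal T)$.

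First I would treat the two outer factors. Repeating the argument behind \autoref{lem:HSest} — using \autoref{lem:HtimesC} to bound the $H^n(\R_+)$-norm of a product by $\norm{g}{H^n(\R_+)}$ times the $C^n_b$-norm of the other factor, and Parseval's identity $\sum_k|(T_{\kappa^{(i)}}e_k)(z)|^2=\norm{\tfrac{\partial^i}{\partial x^i}\kappa(z,\cdot)}{L^2(\R)}^2$ — yields $\norm{m(g,\kappa)}{\HS}\le K\norm{g}{H^n(\R_+)}\norm{\kappa}{\mathcal Z}$, so $m$ is a bounded bilinear map; in particular $m\in C^\infty$, it sends bounded sets to bounded sets, and $D^2m(g,\kappa)[(g_1,\kappa_1),(g_2,\kappa_2)]=m(g_1,\kappa_2)+m(g_2,\kappa_1)$ (and analogously for $Dm$). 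Since \autoref{a:A:sigma} for $n+2$ in particular forces $\sigma$ to satisfy \autoref{a:mugrowthsD} for $n+2$, \autoref{thm:Ndiff} and \autoref{thm:Ndiff2} give $N_\sigma\in C^2(H^n(\R_+);H^n(\R_+))$ with $DN_\sigma(u)v=\ddy\sigma(\cdot,u(\cdot))\,v$ and $D^2N_\sigma(u)[v,\bar v]=\ddyy\sigma(\cdot,u(\cdot))\,v\bar v$, while \autoref{thm:Ncont}, \autoref{thm:NcontC} and \autoref{cor:DND2Nbdsets} show that $N_\sigma$, $DN_\sigma$, $D^2N_\sigma$ all map bounded sets into bounded sets.

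The core of the proof is the middle factor: I would show that $\mathcal T\colon x\mapsto\zeta(\cdot+x,\cdot)$ is of class $C^2$ from $\R$ into $\mathcal Z$ with $\mathcal T'(x)=\zeta^{(1)}(\cdot+x,\cdot)$ and $\mathcal T''(x)=\zeta^{(2)}(\cdot+x,\cdot)$. All three kernels lie in $\mathcal Z$ because \autoref{a:A:zeta} for $n+1$ controls $\zeta^{(i)}$ for $i\le n+2$, and their $\norm{\cdot}{\mathcal Z}$-norms do not depend on $x$ (the defining supremum is shift invariant), which already yields the bounded-sets statement for this factor. For differentiability one uses that, by \autoref{a:A:zeta}, each map $z\mapsto\zeta^{(i)}(z,\cdot)\in L^2(\R)$ is Lipschitz with constant $\sup_z\norm{\zeta^{(i+1)}(z,\cdot)}{L^2(\R)}$ for $i\le n+1$; a Taylor expansion of $\zeta^{(i)}(\cdot+x,\cdot)$ in $x$ with integral remainder then identifies $\mathcal T'$ and $\mathcal T''$ and bounds the remainders by $O(h^2)$ in $\norm{\cdot}{\mathcal Z}$, and the same estimates give continuity of $\mathcal T'$ and $\mathcal T''$. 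This step — carried out essentially as for the corresponding lemma in~\cite{SFBPDir} — is the only genuinely quantitative part and is also the main obstacle: the translation group $(\theta_x)$ is \emph{not} strongly continuous on $H^n(\R_+)$, so the regularity of $x\mapsto\theta_x T_\zeta$ cannot be read off from semigroup theory but has to be squeezed out of the $L^2(\R)$-regularity of the kernel's $x$-derivatives, and it is exactly here that the required number of derivatives of $\zeta$, hence the hypothesis ``\autoref{a:A:zeta} for $n+1$'', is pinned down.

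Finally, combining the three factors by the chain rule for $C^2$ maps gives $\Psi=m\circ(N_\sigma\times\mathcal T)\in C^2\big(H^n(\R_+)\oplus\R;\HS\big)$; expanding $D\Psi$ and $D^2\Psi$, using the bilinearity of $m$ (so that $D^2m$ has the simple two-term form above) and rewriting $m\big(g,\zeta^{(j)}(\cdot+x,\cdot)\big)=g\mHC\theta_x T_{\zeta^{(j)}}$, reproduces precisely the two displayed formulas for $D\Psi$ and $D^2\Psi$. Moreover $\Psi$, $D\Psi$ and $D^2\Psi$ send bounded sets to bounded sets, since each of $N_\sigma,DN_\sigma,D^2N_\sigma$ and $\mathcal T,\mathcal T',\mathcal T''$ does and $m$ is bounded bilinear. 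By \autoref{rmk:Psi2C} all of these statements transfer from $\Psi$ to $\cC$, which completes the argument.
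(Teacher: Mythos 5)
Your proof takes a genuinely different, and cleaner, route than the paper's. The paper argues directly with difference quotients for $\Psi$: it writes down the candidate G\^ateaux derivative, bounds the Taylor remainder of the $\zeta$-shift via two applications of the fundamental theorem of calculus (their estimate~\eqref{eq:zetaft}), then proves continuity of $D^2\Psi$ by decomposing the increment into four terms $R_1,\dots,R_4$ and invoking $N_\sigma\in C^2$ together with Lemma~\ref{lem:HSest}, and finally re-runs the G\^ateaux computation on $D\Psi$ to identify $D^2\Psi$. You instead factor $\Psi = m\circ(N_\sigma\times\mathcal T)$ through a bounded bilinear map $m$ on $H^n(\R_+)\times\mathcal Z$ and apply the chain rule; the three ingredients (the Hilbert--Schmidt bound for $m$, which is exactly Lemma~\ref{lem:HSest}; $C^2$-ness of $N_\sigma$, Theorems~\ref{thm:Ndiff}/\ref{thm:Ndiff2}; and $C^2$-ness of $\mathcal T\colon x\mapsto\zeta(\cdot+x,\cdot)$) are then cleanly separated, the formula for $D^2\Psi$ drops out formally from the bilinearity of $m$, and the bounded-sets claims become trivial. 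The quantitative core of both arguments --- Parseval plus fundamental theorem of calculus applied to the $x$-derivatives of $\zeta$ --- is the same; your packaging just localizes the bookkeeping in the single map $\mathcal T$.

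One caution, which in fact also lurks in the paper's own sketch: the step ``$\mathcal T\in C^2$'' is one derivative hungrier than your bookkeeping records. You note that $z\mapsto\zeta^{(i)}(z,\cdot)$ is Lipschitz in $L^2(\R)$ for $i\le n+1$, which is indeed all that is needed to identify $\mathcal T'$ and to prove its continuity, and matches the paper's treatment of the first-order case. But to bound $\norm{\mathcal T'(x+h)-\mathcal T'(x)-h\,\mathcal T''(x)}{\mathcal Z}$ by $O(h^2)$ via ``FTC twice'' you need a Lipschitz bound on $z\mapsto\zeta^{(i+2)}(z,\cdot)$ for $i\le n$, i.e. control of $\zeta^{(n+3)}$, whereas Assumption~\ref{a:A:zeta} for $n+1$ only controls $\zeta^{(i)}$ up to $i=n+2$. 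The same shortfall occurs in the paper when it differentiates the second summand $y\,N_\sigma(u)\,\theta_x T_{\zeta'}$ of $D\Psi$ in the $x$-direction and appeals to ``the same estimate'' \eqref{eq:zetaft}, which now has to be applied to $\zeta'$ in place of $\zeta$. This is harmless if one either strengthens the hypothesis to Assumption~\ref{a:A:zeta} for $n+2$, or, as in the convolution case $\zeta(x,y)=\zeta(x-y)$, exploits $L^2$-continuity of translations to obtain $o(h)$ in place of $O(h^2)$ without the extra derivative. When you carry out the $\mathcal T''$ step you should be explicit about which of these escapes you take; as written, your claim that the bound for $i\le n+1$ suffices for both $\mathcal T'$ and $\mathcal T''$ does not add up.
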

\begin{rmk}
  For $n=2$ and under the additional assumption that $\sigma(0,0) = 0$ it holds that $\Psi(u,x)\in H^2\cap H^1_0(\R_+)$, when $u\in H^2\cap H^1_0(\R_+)$. This even translates to $D\Psi$ and $D^2\Psi$.
\end{rmk}
\begin{proof}
  Let $u$, $v\in H^n(\R_+)$, $x$, $y\in \R$ and $\epsilon >0$, then with Lemma~\ref{lem:HSest},
  \begin{multline}
    \label{eq:10}
    \norm{\Psi((u,x)+\epsilon (v,y))-\Psi(u,x) - \epsilon D\Psi(u,x)(v,y)}{\HS(U;H^n)} \\
    \leq K_{\zeta} \norm{N_\sigma(u+v) - N_\sigma(u) - DN_\sigma(u)\epsilon v}{H^n} \\
    + K \norm{N_\sigma(u) + \epsilon DN_\sigma(u)v}{H^n} \sup_{z\in \R} \sum_{i= 0}^n \norm{\zeta^{(i)}_{\epsilon y}(z,.)}{L^2(\R)}
  \end{multline}
  In fact, the first term is in $o(\epsilon)$ because of differentiability of $N_\sigma$ we get from \autoref{thm:Ndiff}. For the second summand, we have defined
  \[ \zeta_{z}(x,y) := \zeta (z + x, y) - \zeta(x,y) - z\ddx\zeta(x,y),\; x,y,z\in \R.\]
  Thus,
  \begin{multline}
    \label{eq:zetaft}
    \sup_{z\in \R} \norm{ \zeta_{\epsilon y}(z,.)}{L^2}^2 = \sup_{z\in\R}\int_\R \abs{\zeta(z+\epsilon y, \xi ) - \zeta(z,\xi) - \epsilon y \zeta'(z,\xi)}^2 \d \xi\\
    \leq \abs{\epsilon y}^2\sup_{z\in\R}\int_\R \int_0^1 \abs{\zeta'(z+\alpha \epsilon y,\xi) - \zeta'(z,\xi)}^2\d\alpha \d \xi.
  \end{multline}
  Using fundamental theorem of calculus again, we obtain 
  \[\sup_{z\in\R}\int_\R \int_0^1 \abs{\zeta'(z+\alpha \epsilon y, \xi) - \zeta'(z,\xi )}^2\d\alpha \d \xi\leq \abs{\epsilon y}^2 \sup_{z\in \R} \norm{\zeta''(z,.)}{L^2},\]
  which goes to $0$, as $\epsilon \to 0$. Using that~\eqref{eq:A:Azeta} holds for $i=0,..,n+2$, the same calculation can be done for $\zeta^{(i)}$, $i=1,\dots, n$ which then shows that $D\Psi$ is at least the G\^ateaux derivative of $\Psi$. To finish the proof, it is now enough to show that 
  \[ D\Psi: H^n\oplus \R \to  \Lbd(H^n\oplus \R;\HS(U;H^n))\]
  is G\^ateaux differentiable, and 
  \[D^2\Psi: H^2\oplus \R \to  \Lbd(H^n\oplus\R, H^n\oplus\R; \HS(U;H^n))\]
  is continuous. Let us start with the latter claim and show continuity of each summand separately. To this end we first decompose as above
  \[ D^2\Psi(u,x)[(v,y),(\bar v,\bar y)] = \sum_{k=1}^4 R_k(u,v,\bar v,x,y,\bar y).\]
  Consider $u$, $\tilde u$, $v$, $\bar v\in H^n$, $x,\tilde x,y,\bar y\in \R$. Because $N_\sigma\in C^2$ by \autoref{thm:Ndiff2}, we get
  \begin{multline}
    \label{eq:14}
    \norm{R_1(u,v,\bar v,x,y,\bar y) - R_1(\tilde u,\tilde x,v,\bar v)}{L_2(U;H^n)} \\
    \leq \norm{\left( D^2N_\sigma(u)[v,\bar v] - D^2N_\sigma( \tilde u)[v,\bar v]\right)\mHC \theta_x(T_\zeta (.))}{\HS} \\
    + \norm{D^2N_\sigma(\tilde u)[v,\bar v] \mHC \left(\theta_x(T_\zeta (.)) - \theta_{\tilde x}(T_\zeta(.))\right)}{\HS}.
  \end{multline}
  Applying \autoref{lem:HSest} we see that both terms go to $0$, as $\norm{u-\tilde u}{H^n} + \abs{x-\tilde x}$ does, and that the convergence is uniformly in $v$, $\bar v\in H^n$ with norm smaller than $1$. Indeed, for the first term this is continuity of $D^2N$, the second term can be estimated by
  \begin{multline}
    \norm{D^2N_\sigma(\tilde u)[v,\bar v] \mHC \left(\theta_x(T_\zeta (.)) - \theta_{\tilde x}(T_\zeta(.))\right)}{\HS(U;H^n)} \\
\leq K \norm{D^2N_\sigma(\tilde u)}{\Lbd(H^n)}\norm{v}{H^n} \sup_{z\in \R} \sum_{i=0}^n \norm{\zeta^{(i)}_{x}(z,.) - \zeta^{(i)}_{\tilde x}(z,.)}{L^2(\R)}.
  \end{multline}
  Convergence of the right hand side follows with the same procedure as in~\eqref{eq:zetaft}. For $R_2$ and $R_3$ we use continuity of $DN_\sigma$, for $R_4$ continuity of $N_\sigma$ itself. With almost the same estimates, we observe that $D^2\Psi$ maps bounded sets into bounded sets. In fact, this property is inherited by $N$, $DN$ and $D^2N$, see~\autoref{cor:DND2Nbdsets}.

  It remains to show that $D^2\Psi$ is indeed the derivative of $D\Psi$. The derivative of the second summand can be computed in the same way as $D\Psi$ itself has been computed. For the first summand, we have to be slightly more careful, but note that by~\autoref{lem:HSest}
  \begin{multline*}
    \sup_{\norm{v}{}\leq 1}\norm{DN_\sigma(u)v \*(\theta_{x+\epsilon y}T_\zeta - \theta_x T_\zeta - \epsilon y T_{\zeta'})}{\HS(U;H^n)} \\ \leq  K \norm{DN_\sigma(u)}{\Lbd(H^n(\R))} \sup_{z\in\R}\sum_{i=0}^n \norm{\zeta^{(i)}_{\epsilon y}(z,.)}{L^2(\R)}
  \end{multline*}
  which is in $o(\epsilon)$ thanks to~\eqref{eq:zetaft}. The remaining estimates follow in the same way: First apply~\autoref{lem:HSest}, but then use that $N_\sigma$ is of class $C^2$. Hence, $D^2\Psi$ is the G\^ateaux derivative of $D\Psi$. By continuity of $D^2\Psi$, the differentiability also holds true in Fr\'echet sense.
\end{proof}

%%% Local Variables:
%%% mode: latex
%%% TeX-master: "0paper"
%%% End:

\end{appendix}

%-------------------------------
%\nocite*
\bibliographystyle{plain}
\bibliography{litPaper0}

\begin{thebibliography}{10}

\bibitem{amannInvariant}
H.~Amann.
\newblock Invariant sets and existence theorems for semilinear parabolic and
  elliptic systems.
\newblock {\em J. Math. Anal. Appl.}, 65(2):432--467, 1978.

\bibitem{appell}
J.~Appell and P.~P. Zabrejko.
\newblock {\em Nonlinear superposition operators}, volume~95 of {\em Cambridge
  Tracts in Mathematics}.
\newblock Cambridge University Press, Cambridge, 1990.

\bibitem{dPKZ}
G.~Da~Prato, S.~Kwapie{\'n}, and J.~Zabczyk.
\newblock Regularity of solutions of linear stochastic equations in {H}ilbert
  spaces.
\newblock {\em Stochastics}, 23(1):1--23, 1987.

\bibitem{dPZstochConv}
G.~{Da Prato} and J.~Zabczyk.
\newblock {A note on stochastic convolution}.
\newblock {\em {Stochastic Analysis and Applications}}, {10}({2}):{143--153},
  {1992}.

\bibitem{dPZinf}
G.~{Da Prato} and J.~Zabczyk.
\newblock {\em {Stochastic Equations in Infinite Dimensions}}.
\newblock {Encyclopedia of Mathematics and its Applications}. Cambridge
  University Press, 1992.

\bibitem{deimling}
K.~Deimling.
\newblock {\em Ordinary differential equations in {B}anach spaces}.
\newblock Lecture Notes in Mathematics, Vol. 596. Springer-Verlag, Berlin-New
  York, 1977.

\bibitem{dunfordschwartz2}
N.~Dunford and J.~T. Schwartz.
\newblock {\em Linear operators. {P}art {II}}.
\newblock Wiley Classics Library. John Wiley \& Sons, Inc., New York, 1988.
\newblock Spectral theory. Selfadjoint operators in Hilbert space, With the
  assistance of William G. Bade and Robert G. Bartle, Reprint of the 1963
  original, A Wiley-Interscience Publication.

\bibitem{engelnagel}
K.-J. Engel and R.~Nagel.
\newblock {\em One-parameter semigroups for linear evolution equations}, volume
  194 of {\em Graduate Texts in Mathematics}.
\newblock Springer-Verlag, New York, 2000.
\newblock With contributions by S. Brendle, M. Campiti, T. Hahn, G. Metafune,
  G. Nickel, D. Pallara, C. Perazzoli, A. Rhandi, S. Romanelli and R.
  Schnaubelt.

\bibitem{evans}
L.~C. Evans.
\newblock {\em Partial differential equations}, volume~19 of {\em Graduate
  Studies in Mathematics}.
\newblock American Mathematical Society, Providence, RI, second edition, 2010.

\bibitem{filipovicHJM}
D.~Filipovi{\'c}, S.~Tappe, and J.~Teichmann.
\newblock Term structure models driven by {W}iener processes and {P}oisson
  measures: existence and positivity.
\newblock {\em SIAM J. Financial Math.}, 1(1):523--554, 2010.

\bibitem{grisvardCara}
P.~Grisvard.
\newblock Caract\'erisation de quelques espaces d'interpolation.
\newblock {\em Arch. Rational Mech. Anal.}, 25:40--63, 1967.

\bibitem{henryGeo}
D.~Henry.
\newblock {\em {Geometric Theory of Semilinear Parabolic Equations}}, volume
  840 of {\em {Lecture Notes in Mathematics}}.
\newblock Springer Berlin Heidelberg, 1981.

\bibitem{ikedaWatanabe}
N.~Ikeda and S.~Watanabe.
\newblock {\em Stochastic differential equations and diffusion processes},
  volume~24 of {\em North-Holland Mathematical Library}.
\newblock North-Holland Publishing Co., Amsterdam; Kodansha, Ltd., Tokyo,
  second edition, 1989.

\bibitem{jachimiakInvariance}
W.~Jachimiak.
\newblock A note on invariance for semilinear differential equations.
\newblock {\em Bull. Polish Acad. Sci. Math.}, 45(2):181--185, 1997.

\bibitem{kallenberg}
O.~Kallenberg.
\newblock {\em Foundations of Modern Probability}.
\newblock Applied probability. Springer, 2002.

\bibitem{SFBPDir}
M.~Keller-Ressel and M.~S. M{\"u}ller.
\newblock {A Stefan-type stochastic moving boundary problem}.
\newblock {\em Stochastics and Partial Differential Equations: Analysis and
  Computations}, 4(4):746--790, 2016.

\bibitem{kunzeApprox}
M.~Kunze and J.~van Neerven.
\newblock Continuous dependence on the coefficients and global existence for
  stochastic reaction diffusion equations.
\newblock {\em J. Differential Equations}, 253(3):1036--1068, 2012.

\bibitem{lionsmagenes1}
J.-L. Lions and E.~Magenes.
\newblock {\em Non-homogeneous boundary value problems and applications - 1}.
\newblock Springer, Springer, 1972.

\bibitem{lunardiAnalytic}
A.~Lunardi.
\newblock {\em {Analytic Semigroups and Optimal Regularity in Parabolic
  Problems}}.
\newblock {Progress in Nonlinear Differential Equations and Their
  Applications}. Birkh{\"a}user Basel, 1995.

\bibitem{lunardiInterpol}
A.~Lunardi.
\newblock {\em Interpolation theory}.
\newblock Appunti. Scuola Normale Superiore di Pisa (Nuova Serie). [Lecture
  Notes. Scuola Normale Superiore di Pisa (New Series)]. Edizioni della
  Normale, Pisa, second edition, 2009.

\bibitem{milianComparison}
A.~Milian.
\newblock Comparison theorems for stochastic evolution equations.
\newblock {\em Stoch. Stoch. Rep.}, 72(1-2):79--108, 2002.

\bibitem{SFBP1stOrder}
M.~S. M{\"u}ller.
\newblock {A stochastic Stefan-type problem under first-order boundary
  conditions}.
\newblock {\em forthcoming in The Annals of Applied Probability}.

\bibitem{diss}
M.~S. M{\"u}ller.
\newblock Semilinear stochastic moving boundary problems.
\newblock Doctoral thesis, TU Dresden, 2016.

\bibitem{nakayamasupport}
T.~Nakayama.
\newblock Support theorem for mild solutions of {SDE}'s in {H}ilbert spaces.
\newblock {\em J. Math. Sci. Univ. Tokyo}, 11(3):245--311, 2004.

\bibitem{nakayamaViab}
T.~Nakayama.
\newblock Viability theorem for {SPDE}'s including {HJM} framework.
\newblock {\em J. Math. Sci. Univ. Tokyo}, 11(3):313--324, 2004.

\bibitem{pavelInvariant}
N.~Pavel.
\newblock Invariant sets for a class of semi-linear equations of evolution.
\newblock {\em Nonlinear Anal.}, 1(2):187--196, 1976/77.

\bibitem{pavel1984differential}
N.~H. Pavel.
\newblock {\em Differential equations, flow invariance and applications},
  volume 113.
\newblock Pitman Pub., 1984.

\bibitem{pazySemigroups}
A.~Pazy.
\newblock {\em {Semigroups of Linear Operators and Applications to Partial
  Differential Equations}}.
\newblock Number~44 in {Applied Mathematical Sciences}. Springer, 1992.

\bibitem{pruessInvariant}
J.~Pr{\"u}ss.
\newblock On semilinear parabolic evolution equations on closed sets.
\newblock {\em J. Math. Anal. Appl.}, 77(2):513--538, 1980.

\bibitem{stannatAnalysis}
M.~Sauer and W.~Stannat.
\newblock Analysis and approximation of stochastic nerve axon equations.
\newblock {\em Mathematics of Computation}, 2016.

\bibitem{stefanEis}
J.~Stefan.
\newblock {{\"U}ber die Theorie der Eisbildung, insbesondere {\"u}ber die
  Eisbildung im Polarmeere.}
\newblock {\em {Wien. Ber. XCVIII, Abt. 2a (965--983)}}, 1888.

\bibitem{stroockvaradhanSupport}
D.~W. Stroock and S.~R.~S. Varadhan.
\newblock On the support of diffusion processes with applications to the strong
  maximum principle.
\newblock In {\em Proceedings of the {S}ixth {B}erkeley {S}ymposium on
  {M}athematical {S}tatistics and {P}robability ({U}niv. {C}alifornia,
  {B}erkeley, {C}alif., 1970/1971), {V}ol. {III}: {P}robability theory}, pages
  333--359. Univ. California Press, Berkeley, Calif., 1972.

\bibitem{zabzcykWZA}
G.~Tessitore and J.~Zabczyk.
\newblock Wong-{Z}akai approximations of stochastic evolution equations.
\newblock {\em J. Evol. Equ.}, 6(4):621--655, 2006.

\bibitem{twardowskaSurvey}
K.~Twardowska.
\newblock Wong-{Z}akai approximations for stochastic differential equations.
\newblock {\em Acta Appl. Math.}, 43(3):317--359, 1996.

\bibitem{valentBook}
T.~Valent.
\newblock {\em Boundary Value Problems of Finite Elasticity: Local Theorems on
  Existence, Uniqueness, and Analytic Dependence on Data}, volume~31 of {\em
  Springer Tracts in Natural Philosophy}.
\newblock Springer New York, 1988.

\bibitem{valentPaper}
Tullio Valent.
\newblock A property of multiplication in {S}obolev spaces. {S}ome
  applications.
\newblock {\em Rend. Sem. Mat. Univ. Padova}, 74:63--73, 1985.

\bibitem{weisEvEqBS}
J.~M. A.~M. van Neerven, M.~C. Veraar, and L.~Weis.
\newblock Stochastic evolution equations in {UMD} {B}anach spaces.
\newblock {\em J. Funct. Anal.}, 255(4):940--993, 2008.

\bibitem{wz}
E.~Wong and M.~Zakai.
\newblock On the relation between ordinary and stochastic differential
  equations.
\newblock {\em Internat. J. Engrg. Sci.}, 3:213--229, 1965.

\bibitem{zabczykFwdInvariance}
J.~Zabczyk.
\newblock Stochastic invariance and consistency of financial models.
\newblock {\em Atti Accad. Naz. Lincei Cl. Sci. Fis. Mat. Natur. Rend. Lincei
  (9) Mat. Appl.}, 11(2):67--80, 2000.

\end{thebibliography}
%-------------------------------
\end{document}